\newtheorem{thm}{Theorem}
\newtheorem{lem}[thm]{Lemma}
\newtheorem{cor}[thm]{Corollary}
\newtheorem{assertion}[thm]{Assertion}
\theoremstyle{definition}
\newtheorem{defn}[thm]{Definition}
\newtheorem{remark}[thm]{Remark}
\newtheorem{examples}[thm]{Examples}
\newtheorem{example}[thm]{Example}
\theoremstyle{remark}
\newtheorem{quest}[thm]{Question}
 \newcommand{\N}{{\mathbb N}}
\newcommand{\Z}{{\mathbb Z}} \newcommand{\R}{{\mathbb R}}
 \newcommand{\C}{{\mathbb C}}
\newcommand{\sph}{{\mathbb S}}
 \newcommand{\I}{{\mathcal I}}
\newcommand{\Pp}{{\EuScript P}}
\newcommand{\Env}{{\EuScript E}}
\newcommand{\Ii}{{\EuScript I}}
\newcommand{\Qq}{{\EuScript Q}}
\newcommand{\partialIIk}{\partial_{\,\II,\kappa}}
\newcommand{\x}{{\tt x}}
\newcommand{\kk}{\mathrm{k}}
\newcommand{\II}{\mathbb{I}}
\newcommand{\hh}{\mathbb{H}}
\newcommand{\oo}{\mathbb{O}}
\newcommand{\mc}{\mathcal}
\newcommand{\mr}{\mathrm}
\newcommand{\mscr}{\mathscr}
\newcommand{\sss}{\scriptscriptstyle}
\newcommand{\n}{\mathtt{n}}
\newcommand{\m}{\mathtt{m}}
\newcommand{\DD}{\overline{\EuScript D}}
\newcommand{\zz}{\mathtt{z}}
\newcommand{\xx}{\mathtt{x}}
\newcommand{\bs}{{$\scriptscriptstyle\blacksquare$}}
\begin{document}
\title[]{Slice Fueter-regular functions}

\author{Riccardo Ghiloni}
\address{Dipartimento di Matematica, Via Sommarive, 14, Universit\`a di Trento, 38123 Povo-Trento (ITALY)}
\email{riccardo.ghiloni@unitn.it}
\thanks{The author is supported by GNSAGA of INDAM}

\begin{abstract}
Slice Fueter-regular functions, originally called slice Dirac-regular functions, are generalized holomorphic functions defined over the octonion algebra $\oo$, recently introduced by M. Jin, G. Ren and I. Sabadini.
A function $f:\Omega_D\subset\oo\to\oo$ is called (quaternionic) slice Fueter-regular if, given any quaternionic subalgebra $\hh_\II$ of $\oo$ generated by a pair $\II=(I,J)$ of orthogonal imaginary units $I$ and $J$ ($\hh_\II$ is a `quaternionic slice' of $\oo$), the restriction of $f$ to $\Omega_D\cap\hh_\II$ belongs to the kernel of the corresponding Cauchy-Riemann-Fueter operator $\frac{\partial}{\partial x_0}+I\frac{\partial}{\partial x_1}+J\frac{\partial}{\partial x_2}+(IJ)\frac{\partial}{\partial x_3}$.

The goal of this paper is to show that slice Fueter-regular functions are standard (complex) slice functions, whose stem functions satisfy a Vekua system 
having exactly the same form of the one characterizing axially monogenic functions of degree zero. The mentioned standard sliceness of slice Fueter-regular functions is able to reveal their `holomorphic nature': slice Fueter-regular functions have Cauchy integral formulas, Taylor and Laurent series expansions, and a version of Maximum Modulus Principle, and each of these properties is global in the sense that it is true on genuine $8$-dimesional domains of $\oo$. Slice Fueter-regular functions are real analytic. Furthermore, we introduce the global concepts of spherical Dirac operator $\Gamma$ and of slice Fueter operator $\overline{\vartheta}_F$ over octonions, which allow to characterize slice Fueter-regular functions as the $\mscr{C}^2$-functions in the kernel of $\overline{\vartheta}_F$ satisfying a second order differential system associated with $\Gamma$. 

The paper contains eight open problems.
\end{abstract}

\keywords{Slice functions, Fueter-regular functions, Vekua systems, Slice regular functions, Axially monogenic functions, Spherical Dirac operator, Slice Fueter operator, Borel-Pompeiu integral formula, Cauchy integral formula, Taylor series expansions, Laurent series expansions, Maximum Modulus Principle, Quasianalytic functions, Dirac operators}
\subjclass[2010]{Primary 30G35; Secondary 32A30, 30E20, 30C80, 17A35} .

\maketitle 

\section{Introduction} \label{sec:1}

In \cite{JRS2019} M. Jin, G. Ren and I. Sabadini introduced and studied the class of slice Dirac-regular functions. Slice Dirac-regularity is a new concept of `holomorphy over octonions', which merges the classical quaternionic Fueter-regularity and the more recent slice regularity ideas.

In the present paper we use the term {\it slice Fueter-regular} function to indicate a slice Dirac-regular function in the sense of \cite{JRS2019}. See final Subsection \ref{subsec:sdrf} for the reason of this choice.

Our aim is to prove that slice Fueter-regular functions are standard slice fun\-ctions satisfying a Vekua-type system, whose form is identical to the one characterizing axially monogenic functions of degree zero. The mentioned standard sliceness has some basic consequences. Slice Fueter-regular functions $f:\Omega\subset\oo\to\oo$ satisfy Cauchy-type integral formulas of {\it global nature}, i.e. valid on the whole domain $\Omega$ of $f$. At each point $y$ of $\Omega$, $f$ admits `Fueter polynomial' series expansions of {\it global nature}, i.e. converging in standard $8$-dimensional Euclidean neighborhoods of $y$. In particular, slice Fueter-regular functions are real analytic. Furthermore, thanks to a new result which allows to pointwise `annihilate' the camshaft effect occuring in the slice product between any given slice function and certain pairs of octonionic constants, we are able to show that slice Fueter-regular functions satisfy a version of Maximum Modulus Principle. 

In the 1934 paper \cite{fueter1934}, R. Fueter introduced a notion of `holomorphic function over quaternions', making use of the generalized Cauchy-Riemann operator $\DD=\frac{\partial}{\partial x_0}+i\frac{\partial}{\partial x_1}+j\frac{\partial}{\partial x_2}+k\frac{\partial}{\partial x_3}$. Given a non-empty open subset $\Omega$ of the quaternion algebra $\hh$, a function $f:\Omega\to\hh$ is nowadays called {\it (left) Fueter-regular} if it is of class $\mscr{C}^1$ in the usual real sense, and
\[
\DD f(x)=\frac{\partial f}{\partial x_0}(x)+i\frac{\partial f}{\partial x_1}(x)+j\frac{\partial f}{\partial x_2}(x)+k\frac{\partial f}{\partial x_3}(x)=0 \quad \text{for each $x\in \Omega$},
\]
where $(x_0,x_1,x_2,x_3)$ are the real coordinates of $x$, i.e. $x=x_0+x_1i+x_2j+x_3k$. The operator $\DD$ is called Cauchy-Riemann-Fueter operator, even though it has been originally introduced by G. C. Moisil in \cite{Mo1931}. The theory of Fueter regularity is now well-established, including its generalizations over Clifford algebras (see e.g. \cite{BDS1982,DSS1992,GHS2008,Su1979}). It shares several fundamental analytic results with the theory of holomorphic functions of a complex variable; for instance, it has its own Cauchy integral formula, Taylor and Laurent series expansions, and Maximum Modulus Principle. However, the algebraic nature is lost in the sense that classical polynomials with quaternionic coefficients on the right are almost always not Fueter-regular. For instance, given the monomial $xa$ with $a\in\hh$, $\DD(xa)=-2a$ so $xa$ is Fueter-regular if and only if $a=0$.

In 2006 G. Gentili and D. C. Struppa \cite{GS2006,GS2007} introduced a new notion of regularity for quaternionic functions $f:\Omega\subset\hh\to\hh$, the one of {\it slice regular} function, starting from an idea of G. C. Cullen \cite{Cu1965}. These functions have remarkable properties. Besides Cauchy-type integral formulas, Taylor- and Laurent-type series expansions, and Maximum and Minimum Modulus Principles, they include all classical quaternionic polynomials $\sum_{h=0}^dx^ha_h$ and their multiplicative structure. Furthermore, the theory of slice regular functions has applications to functional calculus and mathe\-matical foundation of quantum mechanics (see e.g. \cite{CSS2011,GMP2013,GMP2017} and their references), classification of orthogo\-nal complex structures in $\R^4$ (see e.g. \cite{Alta2018,AS2019,GSaS2014}), and operator semigroup theory (see e.g. \cite{CS2011,GR2016,GR2018}). Slice regulari\-ty was extended to Clifford algebras and octonions in \cite{CSS2009,GS2008,GS2010}. Paper \cite{GP2011} contains an unified approach valid over all real alternative *-algebras, based on the notion of {\it stem} function. The latter notion allows to define the concept of {\it slice} function, which extends the one of slice regular function to the set-theoretic, not necessarily holomorphic, setting. It is important to remember that the germ of the idea of slice function was already contained in \cite{fueter1934,Ri1960,Cu1965} for associative algebras and in \cite{Sce1957,DS1973} for non-associative algebras. 


\subsection{Structure of the paper} In the remaining part of present introductory section we briefly recall basic concepts and results from the theories of slice functions and slice Fueter-regular functions over the octonion algebra $\oo$. We conclude the section fixing some notations we will use throughout the next two sections of the paper.

Section \ref{sec:results} is devoted to the presentation of our results. 

First, we compare the mentioned theories in Subsection \ref{subsec:comparison}. In Subsection \ref{subsec:sfo} we introduce and study the concepts of spherical Dirac operator $\Gamma$ and of slice Fueter operator $\overline{\vartheta}_F$ over $\oo$, which allow to describe slice Fueter-regular functions as the $\mscr{C}^2$-functions in the kernel of $\overline{\vartheta}_F$ satisfying a second order differential system associated with $\Gamma$. 

In Sections 5 and 6 of \cite{JRS2019}, the authors prove that, given any slice Fueter-regular function $f$ and fixed any pair $\II=(I,J)$ of orthogonal imaginary units $I$ and $J$ of $\oo$, the restriction of $f$ to the subalgebra $\hh_{\II}$ of $\oo$ generated by $I$ and $J$ satisfies a version of Cauchy integral formula, and it admits a Taylor or Laurent series expansion via Fueter-type polynomials or rational functions. All these results, including Theorems 5.4, 6.4 and 6.5, are {\it not global} in the sense that they give information on $f$ only over $\hh_\II$, see Remarks 6.2 and 6.3 of \cite{JRS2019}. In Subsections \ref{subsec:cauchy}, \ref{subsec:taylor-expansions} and \ref{subsec:laurent-expansions} below, we present global versions of the mentioned results. Subsection \ref{subsec:cauchy} contains also a global slice version of Borel-Pompeiu formula. In Subsection \ref{subsec:mmp}, we present a version of Maximum Modulus Principle for slice Fueter-regular functions. As we said, this Principle is based on a new result, Lemma~\ref{lem:F1-F2}, which is able to pointwise `annihilate' the camshaft effect occurring in our non-associative setting. 

The proofs of our results are postponed to Section \ref{sec:proofs}. 

Section \ref{sec:final} contains some final comments concerning possible generalizations of the concept of slice Fueter-regular function.

In the present paper we formulate eight open problems, see {\it Questions} \ref{quest:1}, \ref{quest:2}, \ref{quest:3}, \ref{quest:real-vector-basis}, \ref{quest:4}, \ref{quest:38}, \ref{quest:50} and~\ref{quest:51}. We conjecture that {\it Questions} \ref{quest:1}, \ref{quest:2}, \ref{quest:3} and \ref{quest:4} have substantially affirmative, possibly demanding, solutions.


\subsection{Slice functions over $\oo$}\label{subsec:GS} Over octonions the stem function approach to slice regular fun\-ctions is as follows. By the Cayley-Dickson process, the division algebra of octonions can be defined as $\oo:=\hh+\ell\hh$ with $(a+\ell b)(c+\ell d):=(ac-d\overline{b})+\ell(\overline{a}d+cb)$ and $\overline{a+\ell b}:=\overline{a}-\ell b$ for each $a,b,c,d\in\hh$, where $\overline{a}$ denotes the usual quaternionic conjugation of $a$. As a consequence, $(1,i,j,k, \ell, \ell i, \ell j,\ell k)$ can be identified with the canonical real vector basis of $\oo$, and $\R$, $\C$ and $\hh$ with the real subalgebras of $\oo$ generated by $\{1\}$, $\{i\}$ and $\{i,j\}$, respectively. We assume that $\oo$ is equipped with the Euclidean topology of $\R^8$, and $|x|$ is the usual Euclidean norm of $x$ in $\R^8$.

Let $\sph=\{I\in\oo\,:\,I^2=-1\}$ be the $6$-sphere of imaginary units of~$\oo$. For each $I\in\sph$, we indicate $\C_I$ the real subalgebra of $\oo$ generated by $I$, which coincides with its real vector subspace $\mr{Span}(1,I)$. The octonion algebra $\oo$ decomposes into `complex slices' as follows:
\[
\oo=\bigcup_{I\in\sph}\C_I, \quad \text{ where $\C_I\cap\C_J=\R\;$ if $I\neq\pm J$.}
\]

Let $D$ be a subset of $\C=\R^2$, invariant under the complex conjugation $(\alpha,\beta)\mapsto(\alpha,-\beta)$. Define the circularization $\Omega_D$ of $D$ in $\oo$ by setting
\[
\Omega_D:=\{\alpha+\beta I\in\oo\,:\,(\alpha,\beta)\in D,I\in\sph\}.
\]
Note that, if $D$ is open in $\C$, then $\Omega_D$ is open in $\oo$. A subset of $\oo$ is said to be circular if it is equal to some $\Omega_D$. Suppose that $D\neq\emptyset$. A function $F=(F_1,F_2):D\to\oo^2$ is said to be a {\it stem function} if $F$ is conjugation-intrinsic in the sense that $F(\overline{z})=\overline{F}(z)$ for each $z\in D$, where $\overline{F}(z):=(F_1(z),-F_2(z))$. Equivalently, $F$ is a stem function if and only if
\begin{equation}\label{eq:stem}
\text{$F_1(\alpha,-\beta)=F_1(\alpha,\beta)\;$ and $\;F_2(\alpha,-\beta)=-F_2(\alpha,\beta)\quad$ for each $(\alpha,\beta)\in D$.}
\end{equation}
We denote $\mr{Stem}(D,\oo^2)$ the set of all stem functions from $D$ to $\oo^2$. A function $f:\Omega_D\to\oo$ is called {\it (left) slice function} if there exists a stem function $F=(F_1,F_2):D\to\oo^2$ such that, if $x=\alpha+\beta I\in\Omega_D$ for some $I\in\sph$ and $z=(\alpha,\beta)\in D$, then
\[
f(\alpha+\beta I)=F_1(z)+IF_2(z).
\]
Note that, thanks to \eqref{eq:stem}, the above definition of $f$ is well-posed. Indeed, if $x\in\R$, then $\beta=0$ and $I$ can be choosen arbitrarily in $\sph$; anyway, $F_2(z)=0$ so $f(x)=F_1(z)$. If $x\not\in\R$, then there exist, and are unique, $(\alpha,\beta)\in D$ and $I\in\sph$ such that $\beta>0$ and $x=\alpha+\beta I=\alpha+(-\beta)(-I)$; anyway, $F_1(\alpha,\beta)+IF_2(\alpha,\beta)=F_1(\alpha,-\beta)+(-I)F_2(\alpha,-\beta)$. We say that $f$ is induced by $F$ and we write $f=\I(F)$. The slice function $f$ is induced by a unique stem function $F$. Indeed, given any $J\in\sph$, $F_1(\alpha,\beta)=\frac{1}{2}(f(\alpha+\beta J)+f(\alpha-\beta J))$ and $F_2(\alpha,\beta)=-\frac{1}{2}J(f(\alpha+\beta J)-f(\alpha-\beta J))$. We denote $\mc{S}(\Omega_D,\oo)$ the set of all slice functions from $\Omega_D$ to $\oo$. As we just know, the map $\I:\mr{Stem}(D,\oo^2)\to\mc{S}(\Omega_D,\oo)$ is a bijection. For details concerning the relations between the differential regularity of stem functions and the one of the induced slice functions, we refer the reader to \cite[Proposition 7]{GP2011}.

In general the pointwise product of two slice functions is not a slice function. Let us recall the notion of slice product. Let $f=\I(F),g=\I(G)\in\mc{S}(\Omega_D,\oo)$ with $F=(F_1,F_2)$ and $G=(G_1,G_2)$. Equip $\oo^2$ with the multiplication induced by the tensor product $\oo\otimes_{\R}\C$: $(x,y)(x',y'):=(xx'-yy',xy'+yx')$. Now the pointwise product $FG$ is again a stem function, so we can define the {\it slice product} of $f$ and $g$ as $f\cdot g:=\I(FG)$. If $f$ is slice preserving, i.e. $F_1$ and $F_2$ are real-valued, then $f\cdot g$ is equal to the pointwise product $fg$. For instance, for each $n,m\in\N$ and $a\in\oo$, the pointwise-defined monomial function $\oo\to\oo$, $x\mapsto x^n\overline{x}^ma=\overline{x}^mx^na$ is a slice function. The same is true for finite sums of such monomial functions. If $f$ and $g$ are $\C_I$-preserving for some $I\in\sph$, i.e. $f(\Omega_D\cap\C_I)\subset\C_I$ and $g(\Omega_D\cap\C_I)\subset\C_I$, then $f\cdot g=fg$ on $\Omega_D\cap\C_I$. Furthermore, by Artin's theorem \cite[Theorem 3.1]{Sch1966}, if $f$ is $\C_I$-preserving and $g$ is constantly equal to an octonion $c$, then $f\cdot g=fc$ on $\Omega_D\cap\C_I$. The slice product makes $\mc{S}(\Omega_D,\oo)$ a non-associative real alternative algebra, whose center contains all slice preserving functions.

Let $F^c:D\to\oo^2$ be the stem function $F^c(z):=(\overline{F_1(z)},\overline{F_2(z)})$ and let $f^c:=\I(F^c)$. The slice preserving function $N(f):\Omega_D\to\oo$ defined by $N(f):=f\cdot f^c$ is called {\it normal function} of $f$. This is a very important function. For instance, $N(f\cdot g)=N(f)N(g)$, and the zero set $V(N(f))$ of $N(f)$ is the smallest circular subset of $\oo$ containing the zero set $V(f)$ of $f$. Furthermore, if $V(N(f))\neq\Omega_D$ then the function $f^{-\sss\bullet}:\Omega_D\setminus V(N(f))\to\oo$ defined by $f^{-\sss\bullet}:=(N(f))^{-1}f^c$ is the unique slice function such that $f\cdot f^{-\sss\bullet}=f^{-\sss\bullet}\cdot f=1$ on $\Omega_D\setminus V(N(f))$. The function $f^{-\sss\bullet}$ is called {\it slice reciprocal}, or {\it slice multiplicative inverse}, of $f$. If $f$ is slice preserving then $f^{-\sss\bullet}(x)=(f(x))^{-1}$ for each $x\in\Omega_D\setminus V(N(f))$. If $f$ is $\C_I$-preserving then $f^{-\sss\bullet}(x)=(f(x))^{-1}$ for each $x\in(\Omega_D\setminus V(N(f)))\cap\C_I$.

Given $\xi\in\oo$, the real polynomial $\Delta_\xi(x)=x^2-2x\mr{Re}(\xi)+|\xi|^2$ is equal to the normal function $N(\xi-x)$ of $\xi-x$, so the zero set of $\Delta_\xi$ coincides with the set $\sph_\xi:=\{\mr{Re}(\xi)+I|\mr{Im}(\xi)|\in\oo\,:\,I\in\sph\}$, where $\mr{Re}(\xi):=\frac{1}{2}(\xi+\overline{\xi})\in\R$ and $\mr{Im}(\xi):=\frac{1}{2}(\xi-\overline{\xi})$. It turns out that $\sph_\xi$ is equal to the conjugation class of $\xi$ in $\oo$, i.e. $\sph_\xi=\{x\xi x^{-1}\in\oo\,:\,x\in\oo\setminus\{0\}\}$. For further details on slice product, normal function and slice reciprocal, see \cite{GP2011,GPS2017,GPS2019}. 

Suppose that $D$ is open in $\C$ and consider a stem function $F=(F_1,F_2):D\to\oo^2$ of class $\mscr{C}^1$ in the usual real sense. 
Define the functions $\partial F/\partial\bar{z}:D\to\oo^2$ and $F'_2:D\setminus\R\to\oo$ by setting
\[
\frac{\partial F}{\partial \bar{z}}:=\frac{1}{2}\left(\frac{\partial F_1}{\partial \alpha}-\frac{\partial F_2}{\partial \beta},\frac{\partial F_1}{\partial \beta}+\frac{\partial F_2}{\partial \alpha}\right)
\quad \text{ and } \quad
F'_2(\alpha,\beta):=\frac{F_2(\alpha,\beta)}{\beta}.
\]
It is immediate to verify that $\frac{\partial F}{\partial \bar{z}}$ and $(F'_2,0)$ are stem functions. Moreover, if $D\cap\R\neq\emptyset$ then $F'_2$ extends to a continuous function $G$ on the whole $D$ such that $G(\alpha,0)=\frac{\partial F_2}{\partial \beta}(\alpha,0)$ for each $(\alpha,0)\in D\cap\R$. We define the slice functions $\frac{\partial f}{\partial \bar{x}}:\Omega_D\to\oo$ and $f'_s:\Omega_D\setminus\R\to\oo$ by $\frac{\partial f}{\partial \bar{x}}:=\I\left(\frac{\partial F}{\partial \bar{z}}\right)$ and $f'_s:=\I((F'_2,0))$. {\it For short we also use the symbol $\overline{\partial}f$ to denote $\frac{\partial f}{\partial \bar{x}}$}. 
The slice function $f'_s$ is called {\it spherical derivative} of $f$. One can also define the {\it spherical value} of $f$ as the slice function $f_s^\circ:\Omega_D\to\oo$ induced by the stem function $(F_1,0)$. Note that $f(x)=f_s^\circ(x)+\mr{Im}(x)f_s'(x)$ for each $x\in\Omega_D\setminus\R$. For further information on $f_s^\circ$ and $f_s'$, we refer the reader to \cite{GPS2017,GPS2019}.

The slice function $f=\I(F):\Omega_D\to\oo$ is said to be {\it slice regular} if $\frac{\partial F}{\partial \bar{z}}=0$ on $D$ or, equivalently, if $\overline{\partial}f=0$ on $\Omega_D$. It is easy to see that the slice function $f=\I(F):\Omega_D\to\oo$ is slice regular if and only if, for some (and hence for all) $I\in\sph$, the restriction of $f$ to $\Omega_D\cap\C_I$ is holomorphic in the following sense: $\big(\frac{\partial}{\partial \alpha}+I\frac{\partial}{\partial \beta}\big)f(\alpha+I\beta)=0$. Polynomial functions $\sum_{h=0}^dx^ha_h:\oo\to\oo$ with coefficients $a_h$ in $\oo$ are examples of slice regular functions. For further details on the general stem function approach to the theory of slice regularity and on the particular case of octonions, we refer the reader to \cite{GP2011-camshaft,GP2011-new-approach,GP2011,GP2014,GP2014-global,GPR2017,GPS2017,GPS2017-singular,GPS2019,GP2019}. 


\subsection{Slice Fueter-regular functions}\label{subsec:fueter} As we said, in \cite{JRS2019} M. Jin, G. Ren and I. Sabadini introduced and studied slice Dirac-regular functions over $\oo$, we call slice Fueter-regular functions. 
Their idea 
is to mime the construction described above by decomposing the octonion algebra $\oo$ in `quaternionic slices' and by using the operator $\DD$ instead of $\frac{\partial}{\partial\overline{z}}$. Professor G. Ren presented this idea at the 12th ISAAC Congress held in Aveiro on summer 2019.

We will use notations slightly different from the ones of \cite{JRS2019}.

Let $\mc{N}:=\{(I,J)\in\sph\times\sph\,:\,I\perp J\}$ and, for each $\II=(I,J)\in\mc{N}$, let $\hh_{\II}$ be the subalgebra of~$\oo$ generated by $\{I,J\}$. Note that $\hh_{\II}$ coincides with the real vector subspace of $\oo$ generated by the orthonormal set $\{1,I,J,IJ\}$, and it is isomorphic to $\hh$. The octonion algebra $\oo$ decomposes into `quaternionic slices':
\[
\oo=\bigcup_{\II\in\mc{N}}\hh_{\II}.
\]

Identify $O(3)$ with the group of $4\times4$ orthogonal real matrices $A$ fixing the first vector of the canonical vector basis of $\R^4$, i.e. $A1=1$. Note that each matrix $A$ in $O(3)$ defines a linear trasformation both in $\R^4$ and in $\oo^4$, via the usual matrix-column multiplication $v\mapsto Av$. Here a vector $v$ in $\R^4$, or in $\oo^4$, is considered as a column; anyway, we often write $v$ as a row.

Let $E$ be a subset of $\R^4$. Suppose that it is $O(3)$-invariant, i.e. $Av\in E$ for each $v\in E$ and $A\in O(3)$. 
Define the circularization $\Theta_E$ of $E$ in $\oo$ by setting
\[
\Theta_E:=\{x_0+x_1I+x_2J+x_3IJ\in\oo\,:\,(x_0,x_1,x_2,x_3)\in E,(I,J)\in\mc{N}\}.
\]
If $E$ is open in $\R^4$, then $\Theta_E$ is open in $\oo$. Suppose that $E\neq\emptyset$. A function $\mc{F}=(\mc{F}_0,\mc{F}_1,\mc{F}_2,\mc{F}_3):E\to\oo^4$ is said to be a {\it $O(3)$-stem function} if $\mc{F}$ is $O(3)$-intrinsic in the sense that
\begin{equation} \label{eq:O3-stem}
\mc{F}(Av)=A\mc{F}(v) \quad \text{ for each $v\in E$ and $A\in O(3)$.}
\end{equation}
We denote $\mr{Stem}_{O(3)}(E,\oo^4)$ the set of all $O(3)$-stem functions from $E$ to $\oo^4$. We say that a function $f:\Theta_E\to\oo$ is a {\it (left) $O(3)$-slice function} if there exists a $O(3)$-stem function $\mc{F}=(\mc{F}_0,\mc{F}_1,\mc{F}_2,\mc{F}_3):E\to\oo^4$ such that, if $x=x_0+x_1I+x_2J+x_3IJ$ for some $v=(x_0,x_1,x_2,x_3)\in E$ and $(I,J)\in\mc{N}$, then
\[
f(x)=\mc{F}_0(v)+I\mc{F}_1(v)+J\mc{F}_2(v)+(IJ)\mc{F}_3(v).
\]
It is proven in \cite[Proposition 3.8]{JRS2019} that, thanks to \eqref{eq:O3-stem}, the preceding definition is well-posed. We say that $f$ is induced by $\mc{F}$ and we write $f=\I_{O(3)}(\mc{F})$. The $O(3)$-slice function $f$ is induced by a unique $O(3)$-stem function, see the proof of Theorem 3.10 of \cite{JRS2019}. We denote $\mc{S}_{O(3)}(\Theta_E,\oo)$ the set of all $O(3)$-slice functions from $\Theta_E$ to~$\oo$. The map $\I_{O(3)}:\mr{Stem}_{O(3)}(E,\oo^4)\to\mc{S}_{O(3)}(\Theta_E,\oo)$ is a bijection.

Suppose that $E$ is open in $\R^4$. Let $\mc{F}=(\mc{F}_0,\mc{F}_1,\mc{F}_2,\mc{F}_3):E\to\oo^4$ be a $O(3)$-stem function of class $\mscr{C}^1$ and let $f=\I_{O(3)}(\mc{F}):\Theta_E\to\oo$ be the $O(3)$-slice function induced by $\mc{F}$. According to \cite[Definition 4.1]{JRS2019}, we say that $f$ is a {\it(left $O(3)$-)slice Fueter-regular function} if $\mc{F}$ satisfies the following system:
\begin{equation} \label{eq:system}
\left\{
\begin{array}{l}
\displaystyle\frac{\partial \mc{F}_0}{\partial x_0}-\frac{\partial \mc{F}_1}{\partial x_1}-\frac{\partial \mc{F}_2}{\partial x_2}-\frac{\partial \mc{F}_3}{\partial x_3}=0
\vspace{.7em}\\
\displaystyle\frac{\partial \mc{F}_0}{\partial x_1}+\frac{\partial \mc{F}_1}{\partial x_0}-\frac{\partial \mc{F}_2}{\partial x_3}+\frac{\partial \mc{F}_3}{\partial x_2}=0 \vspace{.7em}\\
\displaystyle\frac{\partial \mc{F}_0}{\partial x_2}+\frac{\partial \mc{F}_1}{\partial x_3}+\frac{\partial \mc{F}_2}{\partial x_0}-\frac{\partial \mc{F}_3}{\partial x_1}=0
\vspace{.7em}\\
\displaystyle\frac{\partial \mc{F}_0}{\partial x_3}-\frac{\partial \mc{F}_1}{\partial x_2}+\frac{\partial \mc{F}_2}{\partial x_1}+\frac{\partial \mc{F}_3}{\partial x_0}=0\,.
\end{array}
\right.
\end{equation}

For each $\II=(I,J)\in\mc{N}$, we define the set $\Omega_{\II}:=\Omega_D\cap\hh_{\II}$, and the functions $f_{\II}:E\to\oo$ and $\DD_{\II}f:\Omega_{\II}\to\oo$ by setting
\[
f_{\II}(v):=f(x_0+x_1I+x_2J+x_3IJ)
\]
and
\[
\DD_{\II}f(x):=\frac{\partial f_{\II}}{\partial x_0}(v)+I\frac{\partial f_{\II}}{\partial x_1}(v)+J\frac{\partial f_{\II}}{\partial x_2}(v)+(IJ)\frac{\partial f_{\II}}{\partial x_3}(v)
\]
for each $v=(x_0,x_1,x_2,x_3)\in E$, where $x=x_0+x_1I+x_2J+x_3IJ$. Note that $\DD_{\II}$ is a `$\hh_{\II}$-slice' version of the Cauchy-Riemann-Fueter operator. In \cite[Proposition~4.2]{JRS2019}, it is proven that, given any $O(3)$-slice function $f=\I_{O(3)}(\mc{F})$ with $\mc{F}$ of class $\mscr{C}^1$, it holds: 
\begin{equation}\label{eq:sfrfDforall}
\text{$f$ is slice Fueter-regular if and only if $\DD_\II f=0$ on $\Omega_\II$ \textit{for all} $\II\in\mc{N}$.}
\end{equation}
Actually, one can prove some more: 
\begin{equation}\label{eq:sfrfDexists}
\text{$f$ is slice Fueter-regular if and only if $\DD_\II f=0$ on $\Omega_\II$ \textit{for at least one} $\II\in\mc{N}$.}
\end{equation}

In Section \ref{sec:proofs} below, we will give the proof of \eqref{eq:sfrfDexists}, including also a new proof of \eqref{eq:sfrfDforall}. We have decided to add such a new proof, because the original proof of \eqref{eq:sfrfDforall}, see \cite[Proposition~4.2]{JRS2019}, contains an implicit computation of $\DD_\II f$, which is not easy to make explicit in the present (non-associative) octonionic setting. See equality \eqref{eq:array''} in Remark \ref{rem:ok} below. 


\subsection{Notations} Identify $\C=\R^2$ with the real vector subspace $\R^2\times\{(0,0)\}$ of $\R^4$.

{\it Throughout the remaining part of the paper, $E$ denotes a non-empty $O(3)$-invariant open subset of $\R^4$, and $D$ the non-empty open subset of $\R^2$ defined by $D:=E\cap\R^2$.} Note that $D$ is a subset of $\R^2$ invariant under the complex conjugation. In what follows, we identify the real coordinates $(\alpha,\beta)$ and $(x_0,x_1,0,0)$ of points in $D$, used in above Subsections \ref{subsec:GS}  and \ref{subsec:fueter}, respectively; namely, we set $(\alpha,\beta)=(x_0,x_1)$.


\section{The results}\label{sec:results}

\subsection{Comparison of the theories}\label{subsec:comparison} 

Our first observation is that
\begin{equation} \label{eq:same-domain}
\Theta_E=\Omega_D.
\end{equation}
This equality is very easy to prove. Indeed, given any $x=x_0+x_1I+x_2J+x_3IJ\in \Theta_E$ with $v=(x_0,x_1,x_2,x_3)\in E$ and $(I,J)\in\mc{N}$, if $x_1=x_2=x_3=0$ then $x=x_0\in E\cap\R=D\cap\R\subset\Omega_D$. Otherwise,  $r:=(x_1^2+x_2^2+x_3^2)^{1/2}>0$ and there exists $A\in O(3)$ such that $Av=(x_0,r,0,0)$; hence $(x_0,r,0,0)\in E\cap\R^2=D$. Since $H:=\frac{1}{r}(x_1I+x_2J+x_3IJ)$ belongs to $\sph$ and $x=x_0+rH$, it follows that $x\in\Omega_D$. This shows that $\Theta_E\subset\Omega_D$. Given any $x=x_0+x_1I\in\Omega_D$ with $(x_0,x_1)\in D$ and $I\in\sph$, the point $(x_0,x_1,0,0)$ belongs to $E$ so, choosing any $J\in\sph$ with $(I,J)\in\mc{N}$, we have $x=x_0+x_1I+0J+0IJ\in\Theta_E$. This proves the inverse inclusion $\Omega_D\subset\Theta_E$ and hence the desired equality $\Theta_E=\Omega_D$.

{\it From now on, we use $\Omega_D$ to denote $\Theta_E$ also.}

If $\Omega_D$ is connected and $\Omega_D\cap\R\neq\emptyset$, then we say that $\Omega_D$ is a {\it slice domain}. If $\Omega_D$
is connected and $\Omega_D\cap\R=\emptyset$, then $\Omega_D$ is called {\it product domain}. Note that, in the first case, $D$ is connected and $D\cap\R\neq\emptyset$; in the second, $D$ has two connected components $D^+$ and $D^-$ switched by complex conjugation, and $\Omega_D$ is homeomorphic to the topological product $\sph\times D^+$. In general $\Omega_D$ decomposes into the disjoint union of its connected components, which are slice domains or product domains. In this way, from now on, we assume that
\begin{equation}\label{assumption}
\text{\it $\Omega_D$ is either a slice domain or a product domain.}
\end{equation}

Our first main result establishes the equivalence between $O(3)$-sliceness and sliceness.

\begin{thm} \label{thm:1}
A function $f:\Omega_D\to\oo$ is $O(3)$-slice if and only if it is slice, namely $\mc{S}_{O(3)}(\Omega_D,\oo)=\mc{S}(\Omega_D,\oo)$. More precisely, the following holds. Let $\Phi:\mr{Stem}_{O(3)}(E,\oo^4)\to\mr{Stem}(D,\oo^2)$ be the map sending each $O(3)$-stem function $\mc{F}=(\mc{F}_0,\mc{F}_1,\mc{F}_2,\mc{F}_3):E\to\oo^4$ into the stem function $(F_1,F_2):D\to\oo^2$ defined by 
\begin{equation}\label{eq:FFFF}
F_1(x_0,x_1):=\mc{F}_0(x_0,x_1,0,0) \quad \text{ and } \quad F_2(x_0,x_1):=\mc{F}_1(x_0,x_1,0,0).
\end{equation}

Then it holds:

\begin{itemize}
 \item[$(1)$] $\Phi$ is bijective and $\I_{O(3)}=\I \circ \Phi$. Equivalently, the following is a commutative diagram of bijective maps:
\begin{equation}\label{eq:diagram}
\xymatrix{
\mr{Stem}_{O(3)}(E,\oo^4)\ar[r]^{\Phi}\ar[d]_{\I_{O(3)}}&\mr{Stem}(D,\oo^2)\ar[d]^{\I}\\
\mc{S}_{O(3)}(\Theta_E,\oo)\ar@{=}[r]&\mc{S}(\Omega_D,\oo)
}
\end{equation}

 \item[$(2)$] If $F=(F_1,F_2):D\to\oo^2$ is a stem function and $\mc{F}=(\mc{F}_0,\mc{F}_1,\mc{F}_2,\mc{F}_3)=\Phi^{-1}(F)$, then given any $v=(x_0,x_1,x_2,x_3)\in E\setminus\R$ it holds
\begin{equation}\label{eq:FF}
\left\{
\begin{array}{l}
\mc{F}_0(v)=F_1(x_0,r),\vspace{.7em}\\
\displaystyle\mc{F}_h(v)=\frac{x_h}{r}F_2(x_0,r)\quad \text{ for each $h\in\{1,2,3\}$},
\end{array}
\right.
\end{equation}
where $r:=(x_1^2+x_2^2+x_3^2)^{1/2}$. In particular, we have:
\begin{equation}\label{eq:C-omega}
\text{$F$ is real analytic if and only if $\mc{F}=\Phi^{-1}(F)$ is.}
\end{equation}
\end{itemize}
\end{thm}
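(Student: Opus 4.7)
My plan hinges on a single geometric observation: every $v=(x_0,x_1,x_2,x_3)\in E$ lies in the $O(3)$-orbit of a point of the form $v_0:=(x_0,r,0,0)\in D$ with $r:=(x_1^2+x_2^2+x_3^2)^{1/2}\ge 0$; and, dually, the stem conditions for $F=(F_1,F_2)$ on $D$ come from a single reflection in $O(3)$. These two facts together force $\mc{F}$ and $F$ to determine each other and yield the explicit formula \eqref{eq:FF}.

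First I would prove that $\Phi$ is well-defined. Applying the intrinsic relation $\mc{F}(Rv)=R\mc{F}(v)$ for $R:=\operatorname{diag}(1,-1,1,1)\in O(3)$ at $v=(x_0,x_1,0,0)$ yields $F_1(x_0,-x_1)=F_1(x_0,x_1)$ and $F_2(x_0,-x_1)=-F_2(x_0,x_1)$, which is exactly the stem condition \eqref{eq:stem}. For injectivity and formula \eqref{eq:FF}: given $v\in E\setminus\R$, I pick $A\in O(3)$ of block form $\operatorname{diag}(1,B)$ with $B\in O(3)$ (the usual $3\times 3$ orthogonal group) sending $(r,0,0)$ to $(x_1,x_2,x_3)$; such a $B$ exists by extending the unit vector $(x_1,x_2,x_3)/r$ to an orthonormal basis of $\R^3$. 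The two reflections $\operatorname{diag}(1,1,-1,1)$ and $\operatorname{diag}(1,1,1,-1)$ applied at $v_0$ force $\mc{F}_2(v_0)=\mc{F}_3(v_0)=0$, so $\mc{F}(v_0)=(F_1(x_0,r),F_2(x_0,r),0,0)$. Then the identity $\mc{F}(v)=A\mc{F}(v_0)$, after componentwise multiplication by $A$, gives exactly \eqref{eq:FF}. (Here I use that the real matrix $B$ acts on $\oo^3$ compatibly with right multiplication by octonion constants, a consequence of $\R$-bilinearity of the octonion product.) In particular $\mc{F}$ is determined by $F$, so $\Phi$ is injective.

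For surjectivity I would define $\mc{F}:E\to\oo^4$ by \eqref{eq:FF} on $E\setminus\R$, setting $\mc{F}(v):=(F_1(x_0,0),0,0,0)$ on $E\cap\R$. The stem identities ensure independence from the sign of $r$, and the intrinsic condition $\mc{F}(Av)=A\mc{F}(v)$ for $A=\operatorname{diag}(1,B)$ follows immediately from $|B(x_1,x_2,x_3)|=r$. By construction $\Phi(\mc{F})=F$. Commutativity $\I_{O(3)}=\I\circ\Phi$ is then checked by evaluating $\I_{O(3)}(\mc{F})$ on the representation $x=\alpha+\beta I+0\cdot J+0\cdot (IJ)$ of a point $\alpha+\beta I\in\Omega_D=\Theta_E$, for any $J\in\sph$ orthogonal to $I$: only the first two summands survive and reproduce $\I(F)(\alpha+\beta I)=F_1(\alpha,\beta)+IF_2(\alpha,\beta)$. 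For \eqref{eq:C-omega}, the forward implication is immediate since $F_j$ is the restriction of $\mc{F}_j$ to the affine subspace $\{x_2=x_3=0\}$. For the converse, the stem conditions force $F_1(x_0,r)$ and $r^{-1}F_2(x_0,r)$ to be even real analytic functions of $r$, hence real analytic functions of $(x_0,r^2)=(x_0,x_1^2+x_2^2+x_3^2)$; composing with the real analytic map $(x_0,x_1,x_2,x_3)\mapsto (x_0,x_1^2+x_2^2+x_3^2)$ and multiplying by $x_h$ yields real analyticity of each $\mc{F}_j$ on $E$.

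\textbf{Main obstacle.} The delicate point is the injectivity step: one must verify that the $\R$-linear action of $B\in O(3)$ on $\oo^3$ really does commute with right multiplication by octonion constants, so that $\mc{F}(v)=A\mc{F}(v_0)$ unambiguously produces the product formula \eqref{eq:FF}. Since $B$ has real entries, this reduces to $\R$-bilinearity of the octonion product, but it is the sole place where the non-associativity of $\oo$ could in principle intervene and deserves explicit checking.
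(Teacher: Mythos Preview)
Your proposal is correct and follows essentially the same route as the paper: use diagonal reflections in $O(3)$ to obtain the stem conditions and the vanishing $\mc{F}_2(v_0)=\mc{F}_3(v_0)=0$, use an orthogonal matrix mapping $(r,0,0)$ to $(x_1,x_2,x_3)$ to derive \eqref{eq:FF} and hence injectivity, define the inverse by \eqref{eq:FF} and verify $O(3)$-intrinsicness from $|B(x_1,x_2,x_3)|=r$, and check $\I_{O(3)}=\I\circ\Phi$ on the representation $x=\alpha+\beta I+0\cdot J+0\cdot(IJ)$.

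Two minor remarks. First, your ``main obstacle'' is not an obstacle at all: the action of a real matrix $B$ on a vector in $\oo^3$ is just a finite $\R$-linear combination of its octonionic entries, so non-associativity never enters---no checking is needed beyond $\R$-bilinearity. Second, for \eqref{eq:C-omega} the paper invokes Whitney's theorem on even differentiable functions to write $F_1(x_0,x_1)=G_1(x_0,x_1^2)$ and $F_2(x_0,x_1)=x_1 G_2(x_0,x_1^2)$; your direct power-series argument (even/odd real analytic functions of $x_1$ are real analytic in $x_1^2$) is valid in the analytic category and amounts to the same thing, though you should be explicit that this holds jointly in $(x_0,x_1)$ near points of $D\cap\R$.
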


A relevant consequence is as follows.

\begin{thm}\label{thm:real-analyticity}
All slice Fueter-regular functions are real analytic. Equivalently, the stem function inducing any slice Fueter-regular function is real analytic.
\end{thm}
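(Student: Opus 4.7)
The plan is to route the argument through the equivalence of $O(3)$-slice and slice functions established in Theorem \ref{thm:1}, combined with the ellipticity of the system \eqref{eq:system}.

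First, I would prove real analyticity of the $O(3)$-stem function $\mc{F}:E\to\oo^4$ inducing $f$. A direct expansion shows that the four equations of \eqref{eq:system} are, componentwise in $\oo$, exactly the equations of the Cauchy-Riemann-Fueter system applied to the formal expression $\mc{F}_0+i\mc{F}_1+j\mc{F}_2+k\mc{F}_3$. In particular, summing $\partial_{x_k}$ applied to the $k$-th equation of \eqref{eq:system} for $k=0,1,2,3$ and using equality of mixed partials, all cross terms cancel and one obtains $\Delta\mc{F}_0=0$ on $E$, with $\Delta:=\sum_{k=0}^{3}\partial_{x_k}^2$; analogous combinations yield $\Delta\mc{F}_h=0$ for $h=1,2,3$. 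Since \eqref{eq:system} is a determined linear first-order system with constant real coefficients whose principal symbol is (left) multiplication by the quaternion $\xi_0+i\xi_1+j\xi_2+k\xi_3$ --- hence invertible for $\xi\neq 0$ --- the system is elliptic. Interior elliptic regularity legitimises the bootstrap from $\mscr{C}^1$ to $\mscr{C}^2$ and in fact gives real analyticity of every real component of $\mc{F}$.

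Second, by the implication \eqref{eq:C-omega} of Theorem \ref{thm:1}(2), real analyticity of $\mc{F}$ is equivalent to real analyticity of the associated ordinary stem function $F=\Phi(\mc{F}):D\to\oo^2$.

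Finally, I would upgrade real analyticity of $F=(F_1,F_2)$ to real analyticity of $f=\I(F)$ on $\Omega_D\subset\oo\equiv\R^8$. For $x\in\Omega_D\setminus\R$, setting $y:=\mr{Im}(x)$ and $r:=|y|>0$, one has $f(x)=F_1(\mr{Re}(x),r)+r^{-1}y\,F_2(\mr{Re}(x),r)$, which is real analytic in the real coordinates of $x$ since $r$ is real analytic off $y=0$. At a point $x_0\in\Omega_D\cap\R$ (when such points exist), the parity relations \eqref{eq:stem} combined with real analyticity of $F$ near $(x_0,0)$ produce real analytic functions $G_1,G_2$ of two real variables with $F_1(\alpha,\beta)=G_1(\alpha,\beta^2)$ and $F_2(\alpha,\beta)=\beta\,G_2(\alpha,\beta^2)$ locally. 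Hence, in a neighbourhood of $x_0$,
\[
f(x)=G_1\bigl(\mr{Re}(x),|\mr{Im}(x)|^2\bigr)+\mr{Im}(x)\,G_2\bigl(\mr{Re}(x),|\mr{Im}(x)|^2\bigr),
\]
which is manifestly real analytic because $\mr{Re}(x)$, $\mr{Im}(x)$ and $|\mr{Im}(x)|^2$ are polynomial in the real coordinates of $x$.

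The main obstacle I anticipate is the first step: the ellipticity argument and the bootstrap from $\mscr{C}^1$ to $\mscr{C}^\infty$ need to be justified in the $\oo^4$-valued setting. However, since the differential operator of \eqref{eq:system} is $\R$-linear with constant coefficients and decouples across the $8$ real components of $\oo$, the nonassociativity of $\oo$ is irrelevant and the classical quaternionic elliptic regularity theorem applies componentwise, reducing the problem to the well-known analyticity of harmonic (equivalently, Fueter-regular) functions on open subsets of $\R^4$.
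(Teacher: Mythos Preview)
Your argument is correct. Both your proof and the paper's rest on the same underlying fact---that the Cauchy--Riemann--Fueter system is elliptic with constant coefficients, so its $\mscr{C}^1$ solutions are harmonic and hence real analytic---but they apply it to different avatars of $f$. The paper fixes $\II\in\mc{N}$, invokes \cite[Lemma~4.6]{JRS2019} to get real analyticity of the restriction $f|_{\Omega_\II}$ (which is Fueter-regular on the quaternionic slice), reads off real analyticity of $(F_1,F_2)$ from the formulas $F_h(x_0,x_1)=\frac12\big(f(x_0+x_1I)\pm f(x_0-x_1I)\big)$ up to a factor, and then cites \cite[Proposition~7(3)]{GP2011} to lift to $f$ on $\Omega_D$. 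You instead work directly with the $O(3)$-stem function $\mc{F}$ on $E\subset\R^4$: since the coefficients in \eqref{eq:system} are real constants, the system decouples over the eight real components of $\oo$ into eight copies of the classical Fueter system, and elliptic regularity gives real analyticity of $\mc{F}$ immediately; then \eqref{eq:C-omega} transfers this to $F$, and you reproduce the Whitney-type argument (essentially the content of \cite[Proposition~7(3)]{GP2011} and the last paragraph of the proof of Theorem~\ref{thm:1}) by hand. Your route is more self-contained---it avoids the external citations to \cite{JRS2019} and \cite{GP2011}---at the cost of spelling out the ellipticity and the passage from $F$ to $f$ explicitly.
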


The concept of slice Fueter-regularity can be explicitly described in `purely slice' terms; this is the aim of our third main result.

\begin{thm} \label{thm:2}
Let $f:\Omega_D\to\oo$ be a slice function and let $F=(F_1,F_2):D\to\oo^2$ be the stem function inducing $f$. The following assertions are equivalent.
\begin{enumerate}
 \item[$(1)$] $f$ is slice Fueter-regular, i.e. $\mc{F}=\Phi^{-1}(F)$ is of class $\mscr{C}^1$ and satisfies \eqref{eq:system}.
 \item[$(2)$] $F_1$ is of class $\mscr{C}^2$, $F_2$ is of class $\mscr{C}^3$ and they satisfy the following Vekua system on $D\setminus\R$:
\begin{equation} \label{eq:variant-CR}
\left\{
\begin{array}{l}
\displaystyle
\frac{\partial F_1}{\partial x_0}(x_0,x_1)-\frac{\partial F_2}{\partial x_1}(x_0,x_1)=\frac{2F_2(x_0,x_1)}{x_1}
\vspace{.7em}\\
\displaystyle
\frac{\partial F_1}{\partial x_1}(x_0,x_1)+\frac{\partial F_2}{\partial x_0}(x_0,x_1)=0\,.
\end{array}
\right.
\end{equation}
 \item[$(2\mr{a})$] $F_1$ is of class $\mscr{C}^2$, $F_2$ is of class $\mscr{C}^3$ and $\overline\partial f=f'_s$ on $\Omega_D\setminus \R$.
 \item[$(2\mr{b})$] $F_1$ is of class $\mscr{C}^2$ and there exists a function $G_2:D\to\oo$ of class $\mscr{C}^3$ such that $F_2(x_0,x_1)=x_1G_2(x_0,x_1)$ and
\begin{equation} \label{eq:variant-CR1}
\left\{
\begin{array}{l}
\displaystyle
\frac{\partial F_1}{\partial x_0}(x_0,x_1)-x_1\frac{\partial G_2}{\partial x_1}(x_0,x_1)=3G_2(x_0,x_1)
\vspace{.7em}\\
\displaystyle
\frac{\partial F_1}{\partial x_1}(x_0,x_1)+x_1\frac{\partial G_2}{\partial x_0}(x_0,x_1)=0.
\end{array}
\right.
\end{equation}
on $D$.
 \item[$(2\mr{c})$] If $D^*$ denotes the set $\{(x_0,x_1)\in\R^2\,:\,x_1\geq0,(x_0,\sqrt{x_1})\in D\}$, then there exist an open neighborhood $D^{**}$ of $D^*$ in $\R^2$ and two functions $H_1,H_2:D^{**}\to\oo$ such that $H_1$ is of class $\mscr{C}^2$, $H_2$ is of class $\mscr{C}^3$, and $F_1(x_0,x_1)=H_1(x_0,x_1^2)$, $F_2(x_0,x_1)=x_1H_2(x_0,x_1^2)$ and
\begin{equation} \label{eq:variant-CR2}
\left\{
\begin{array}{l}
\displaystyle
\frac{\partial H_1}{\partial x_0}(x_0,x_1)-2x_1\frac{\partial H_2}{\partial x_1}(x_0,x_1)=3H_2(x_0,x_1)
\vspace{.7em}\\
\displaystyle
2\frac{\partial H_1}{\partial x_1}(x_0,x_1)+\frac{\partial H_2}{\partial x_0}(x_0,x_1)=0.
\end{array}
\right.
\end{equation}
on the open subset $D^*$ of $\R^2$.
\end{enumerate}

Furthermore, if the preceding assertions are verified, then $F$, $\mc{F}$ and $f$ are real analytic.   
\end{thm}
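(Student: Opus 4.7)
The plan is to use the explicit formulas \eqref{eq:FF} from Theorem \ref{thm:1} to translate directly between the four-component system \eqref{eq:system} on $E$ and conditions on the two-component stem $F=(F_1,F_2)$ on $D$. Since $F_1$ and $F_2$ are recovered as the restrictions of $\mc{F}_0$ and $\mc{F}_1$ to the $(x_0,x_1)$-plane, the chain rule applied to
\[
\mc{F}_0(v)=F_1(x_0,r),\qquad \mc{F}_h(v)=\tfrac{x_h}{r}F_2(x_0,r)\quad(h=1,2,3),
\]
with $r=(x_1^2+x_2^2+x_3^2)^{1/2}$, will be the main computational engine throughout, together with the identities $\partial r/\partial x_j=x_j/r$ and $\partial(x_h/r)/\partial x_j=\delta_{hj}/r-x_hx_j/r^3$.

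For the implication (1)$\Rightarrow$(2) I would first upgrade regularity. Viewed componentwise, the system \eqref{eq:system} is a Cauchy-Fueter system, whose conjugate operator $\partial_{x_0}-i\partial_{x_1}-j\partial_{x_2}-k\partial_{x_3}$ composed with $\DD$ yields the Laplacian $\Delta$. Hence each $\mc{F}_h$ is harmonic (first distributionally, then classically by elliptic regularity), and in particular real analytic, which supplies the classes $\mscr{C}^2$ and $\mscr{C}^3$ claimed for $F_1$ and $F_2$ free of charge. To extract the Vekua system \eqref{eq:variant-CR}, I would evaluate \eqref{eq:system} at $v=(x_0,x_1,0,0)$ with $x_1>0$: at such points only the diagonal mixed terms survive, and the four equations collapse to $\partial_{x_0}F_1-\partial_{x_1}F_2=2F_2/x_1$, $\partial_{x_1}F_1+\partial_{x_0}F_2=0$, together with two automatic identities $0=0$.

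For (2)$\Rightarrow$(1), the issue is the apparent singularity of $x_h/r$ at $r=0$. The stem symmetry forces $F_2(x_0,0)=0$; a Hadamard-type factorization together with $F_2\in\mscr{C}^3$ produces $F_2(x_0,x_1)=x_1G_2(x_0,x_1)$ with $G_2$ even in $x_1$ and of class $\mscr{C}^2$, and a further application yields $G_2(x_0,x_1)=H_2(x_0,x_1^2)$; analogously $F_1(x_0,x_1)=H_1(x_0,x_1^2)$ on $D^*$ (essentially the content of (2c)). Plugging into \eqref{eq:FF} gives the manifestly smooth representations $\mc{F}_0(v)=H_1(x_0,r^2)$ and $\mc{F}_h(v)=x_hH_2(x_0,r^2)$, since $r^2=x_1^2+x_2^2+x_3^2$ is polynomial in $v$; therefore $\mc{F}\in\mscr{C}^1$ on all of $E$, across the real axis included. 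A direct computation of the partials then shows that the system \eqref{eq:variant-CR2} for $(H_1,H_2)$ translates verbatim into \eqref{eq:system} for $\mc{F}$.

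The equivalences (2)$\Leftrightarrow$(2a),(2b),(2c) are essentially formal translations of the same Vekua system: (2a) rewrites the two lines of \eqref{eq:variant-CR} as the two components of $\overline{\partial}f-f_s'=0$, using $\overline{\partial}f=\I(\partial F/\partial\bar z)$ and $f_s'=\I((F_2/x_1,0))$; (2b) is the substitution $F_2=x_1G_2$ in \eqref{eq:variant-CR}; (2c) repeats the same trick in the remaining $x_1$-variable, with $D^*$ parametrizing the image under $x_1\mapsto x_1^2$. Real analyticity of $F$, $\mc{F}$ and $f$ then follows from the ellipticity argument used in (1)$\Rightarrow$(2). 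I expect the main obstacle to be the careful bookkeeping at $r=0$: tracing which regularity on $F_1,F_2$ is needed so that the $1/r$-factor in \eqref{eq:FF} still produces a $\mscr{C}^1$ function on all of $E$, which is where the at-first-sight unusual requirement $F_2\in\mscr{C}^3$ (rather than $\mscr{C}^2$) actually enters.
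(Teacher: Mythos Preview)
Your proposal is correct and follows essentially the same route as the paper: both arguments differentiate the explicit formulas \eqref{eq:FF} via the chain rule to identify \eqref{eq:system} with the Vekua system \eqref{eq:variant-CR} on $E\setminus\R$, and both handle the $r=0$ singularity in the direction $(2)\Rightarrow(1)$ by factoring $F_2(x_0,x_1)=x_1G_2(x_0,x_1)$ and then passing to functions of $x_1^2$ (the paper cites Whitney \cite{Wh1943} for this). The one genuine difference is the source of real analyticity: the paper imports it from the separately proved Theorem~\ref{thm:real-analyticity}, which in turn relies on \cite[Lemma~4.6]{JRS2019}, whereas you obtain it intrinsically from the factorization $\overline{\DD}\,\DD=\Delta$ and elliptic regularity applied to each real component of the $\mc{F}_h$; your argument is slightly more self-contained in this respect.
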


The reader notes that, thanks to \eqref{eq:FF}, above system \eqref{eq:variant-CR1} coincides exactly with (4.5) of \cite{JRS2019}. Furthermore, if $\Omega_D$ is a product domain, then in the preceding statement one can replace `of class $\mscr{C}^2$' and `of class $\mscr{C}^3$' with `of class $\mscr{C}^1$'.

In point $(2)$ of the preceding statement, we referred to \eqref{eq:variant-CR} as a Vekua system. For the theory of Vekua systems, see \cite[Section 6.2]{GHS2016} and references mentioned therein. Actually, Vekua systems of form \eqref{eq:variant-CR} characterize axially monogenic functions of degree zero. For the theory of axially monogenic functions of degree zero and also of positive degree, we refer the reader to \cite{LB1983,So1984,CS1993,So2000} and \cite[Chapter II, Section 2.4]{DSS1992}. For recent developments on this topic, see \cite{CSS2013,PPSS2017,DKQS2019} and their references.

Two consequences of Theorem \ref{thm:2} are as follows:

\begin{cor} \label{cor:3}
Only the constant functions are both slice Fueter-regular and slice regular.
\end{cor}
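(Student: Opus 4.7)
The plan is to compare the two systems characterizing the two regularity concepts for the same stem function $F=(F_1,F_2):D\to\oo^2$ inducing $f$, and to read off the constancy by elementary linear algebra followed by a connectedness argument.

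First, I would recall that $f=\I(F)$ is slice regular precisely when $\partial F/\partial\bar z=0$, i.e.\ when the pair $(F_1,F_2)$ satisfies the Cauchy--Riemann system
\begin{equation}\label{eq:CR-plan}
\frac{\partial F_1}{\partial x_0}-\frac{\partial F_2}{\partial x_1}=0,\qquad \frac{\partial F_1}{\partial x_1}+\frac{\partial F_2}{\partial x_0}=0
\end{equation}
on $D$. By Theorem~\ref{thm:2}(2), slice Fueter-regularity of $f$ amounts to $(F_1,F_2)$ satisfying~\eqref{eq:variant-CR} on $D\setminus\R$, whose second equation coincides with the second equation of~\eqref{eq:CR-plan}. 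Subtracting the first equation of~\eqref{eq:CR-plan} from the first equation of~\eqref{eq:variant-CR} would immediately give $2F_2(x_0,x_1)/x_1=0$ on $D\setminus\R$, so $F_2\equiv 0$ on $D\setminus\R$.

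Next, since $F_2$ is of class $\mscr{C}^3$ on $D$ by Theorem~\ref{thm:2}, continuity extends the identity to $F_2\equiv 0$ on all of $D$ (including points of $D\cap\R$, if any). Plugging $F_2\equiv 0$ back into~\eqref{eq:CR-plan} leaves
\[
\frac{\partial F_1}{\partial x_0}=0,\qquad \frac{\partial F_1}{\partial x_1}=0
\]
on $D$, so $F_1$ is locally constant. It remains to upgrade this to global constancy, which is where assumption~\eqref{assumption} plays its role: if $\Omega_D$ is a slice domain, then $D$ is connected, so $F_1\equiv c$ for some $c\in\oo$; if $\Omega_D$ is a product domain, then $D=D^+\sqcup D^-$ with $D^-=\overline{D^+}$, and $F_1$ is constant on each component, but the stem-function symmetry $F_1(\overline z)=F_1(z)$ forces the two constant values to coincide, so again $F_1\equiv c$ globally. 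Thus $F=(c,0)$ and $f\equiv c$ on $\Omega_D$.

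There is no real obstacle here: the whole argument is a two-line subtraction of systems plus a standard connectedness/stem-symmetry wrap-up, the only subtlety being the use of assumption~\eqref{assumption} to handle the product-domain case via the conjugation-intrinsic property of~$F$. The converse (constants are both slice regular and slice Fueter-regular) is trivial: a constant stem function $F=(c,0)$ manifestly satisfies both~\eqref{eq:CR-plan} and~\eqref{eq:variant-CR}.
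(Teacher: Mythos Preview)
Your proof is correct and follows essentially the same route as the paper's: subtract the Cauchy--Riemann system from the Vekua system~\eqref{eq:variant-CR} to force $F_2\equiv 0$, then read off that $F_1$ has vanishing gradient and invoke assumption~\eqref{assumption} for global constancy. Your treatment is in fact slightly more explicit than the paper's, which compresses the product-domain case into the single phrase ``By assumption~\eqref{assumption}''; your observation that the stem symmetry $F_1(\overline z)=F_1(z)$ matches the constants on $D^+$ and $D^-$ fills in that step.
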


\begin{cor}\label{cor:4}
The slice product $f\cdot g$ of two slice Fueter-regular functions $f,g:\Omega_D\to\oo$ is again a slice Fueter-regular function if and only if either $f$ is constant or $g$ is.
%
\end{cor}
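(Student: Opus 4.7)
The proof splits into the two implications. For the sufficient direction, suppose $g$ is constantly equal to $c\in\oo$, so $G=(c,0)$. The slice product formula $(x,y)(x',y')=(xx'-yy',xy'+yx')$ gives $FG=(F_1c,F_2c)$. Since right multiplication by the constant $c$ is an $\R$-linear endomorphism of $\oo$, it commutes with the real partial derivatives and with division by $x_1$, so the Vekua system \eqref{eq:variant-CR} for $(F_1c,F_2c)$ is immediate from the one for $(F_1,F_2)$. The case $f$ constant is symmetric.

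For the necessary direction, write $f=\I(F_1,F_2)$ and $g=\I(G_1,G_2)$, so $f\cdot g=\I(H_1,H_2)$ with $H_1=F_1G_1-F_2G_2$ and $H_2=F_1G_2+F_2G_1$. Using that each real partial is an $\R$-linear derivation on $\oo$-valued functions and substituting the Vekua relations \eqref{eq:variant-CR} for $(F_1,F_2)$ and $(G_1,G_2)$, a direct computation on $D\setminus\R$ yields
$$\tfrac{\partial H_1}{\partial x_0}-\tfrac{\partial H_2}{\partial x_1}=\tfrac{2F_2}{x_1}G_1+F_1\tfrac{2G_2}{x_1}=\tfrac{2H_2}{x_1},\qquad \tfrac{\partial H_1}{\partial x_1}+\tfrac{\partial H_2}{\partial x_0}=\tfrac{2F_2}{x_1}G_2+F_2\tfrac{2G_2}{x_1}=\tfrac{4F_2G_2}{x_1},$$
where the scalar $x_1\in\R$ can be freely moved. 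Hence the first Vekua equation always holds, while the second is equivalent to $F_2G_2\equiv 0$ on $D\setminus\R$.

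Since $\oo$ is a division algebra, at every $z\in D\setminus\R$ at least one of $F_2(z),G_2(z)$ vanishes, so $|F_2|^2|G_2|^2\equiv 0$ on $D\setminus\R$. By Theorem \ref{thm:real-analyticity} the functions $F_2,G_2$ are real analytic on $D$. Working on a single connected component of $D$: if $|F_2|^2$ is not identically zero there, its nonvanishing set is a nonempty open subset on which $|G_2|^2\equiv 0$, and the identity principle for real analytic functions forces $|G_2|^2\equiv 0$ on the whole component. Combining this with assumption \eqref{assumption} and the stem symmetry \eqref{eq:stem} (which links the two components of $D$ in the product domain case), either $F_2\equiv 0$ on $D$ or $G_2\equiv 0$ on $D$. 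If $F_2\equiv 0$, the Vekua system \eqref{eq:variant-CR} collapses to $\partial F_1/\partial x_0=\partial F_1/\partial x_1=0$ on $D\setminus\R$; by continuity, the connectedness of $D$ provided by \eqref{assumption}, and the stem symmetry \eqref{eq:stem}, $F_1$ is constant on $D$, so $f$ is constant. The case $G_2\equiv 0$ is symmetric.

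The only delicate point is the bookkeeping in the two displayed computations: non-associativity and non-commutativity of $\oo$ mean that the exact order of factors in each term must be preserved, and one must use carefully that $x_1\in\R$ so that the scalar $2/x_1$ commutes with every octonionic factor. No reassociation is required, since the Vekua substitutions act on two-factor groupings that appear intact on the right of the Leibniz expansion.
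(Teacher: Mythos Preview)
Your proof is correct and follows essentially the same approach as the paper's: both reduce, via the characterization $(2)$/$(2\mr{a})$ of Theorem~\ref{thm:2}, to showing that $F_2G_2\equiv 0$ on $D\setminus\R$, and then use real analyticity (Theorem~\ref{thm:real-analyticity}) together with assumption~\eqref{assumption} to conclude that one of $F_2,G_2$ vanishes identically. The only difference is packaging: the paper carries out the computation at the slice-function level using the Leibniz rules $\overline{\partial}(f\cdot g)=(\overline{\partial}f)\cdot g+f\cdot(\overline{\partial}g)$ and $(f\cdot g)'_s=f'_s\cdot g^\circ_s+f^\circ_s\cdot g'_s$ to obtain $\overline{\partial}(f\cdot g)-(f\cdot g)'_s=2\,\Ii(f'_sg'_s)$, whereas you do the equivalent computation directly on the stem components $(H_1,H_2)$. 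Your route is slightly more elementary in that it avoids invoking those Leibniz rules, and it makes explicit the pleasant fact that the \emph{first} Vekua equation for $f\cdot g$ is automatic. One small wording quibble: in the product-domain case $D$ is not connected, so your phrase ``the connectedness of $D$ provided by~\eqref{assumption}'' is slightly off---but you do invoke the stem symmetry~\eqref{eq:stem} in the same breath, which is exactly what bridges the two components, so the argument itself is fine.
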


\begin{remark}
Let $F=(F_1,F_2):D\to\R^2\subset\oo^2$ be a stem function such that $F_1$ is of class $\mscr{C}^2$, $F_2$ is of class $\mscr{C}^3$ and they satisfy differential system~\eqref{eq:variant-CR}. Theorem \ref{thm:2} ensures that $F$ is real analytic. As a matter of fact, we have that the theory of slice Fueter-regularity allows to prove standard regularity results for $\mscr{C}^3$-solutions of a $2\times 2$ differential system as \eqref{eq:variant-CR}.~\bs
\end{remark}

\begin{quest}\label{quest:1}
Is the argument based on the theory of slice Fueter-regular functions, described in above Remark, an example of a general strategy to prove regularity results for solutions of certain real differential systems, via the embedding of $\R$ into higher dimensional real algebras, as $\oo$?
\end{quest}

\begin{remark}
As we have just said, the stem function $F=(F_1,F_2):D\to\oo^2$ inducing a slice Fueter-regular function is real analytic. In particular, both $F_1$ and $F_2$ are of class~$\mscr{C}^2$. In this way, we can differentiate both the equations of \eqref{eq:variant-CR} with respect to $\frac{\partial}{\partial x_0}$ and $\frac{\partial}{\partial x_1}$. We easily deduce:
\[\textstyle
\Delta F_1=-\frac{2}{x_1}\,\frac{\partial F_1}{\partial x_1}
\quad\text{ and }\quad
\Delta F_2=\frac{2}{x_1^2}F_2-\frac{2}{x_1}\,\frac{\partial F_2}{\partial x_1} \quad \text{ on $D\setminus\R$. \bs}
\]
\end{remark}

We recall that, if $f=\I(F)$ is slice regular, then $F$ is harmonic, i.e. $\Delta F_1=\Delta F_2=0$. Making use of the harmonicity of $F_2$, A. Perotti proved in \cite[Section 6]{Pe2019} that all quaternionic slice regular functions are biharmonic.

\begin{quest}\label{quest:2}
Have slice Fueter-regular functions some harmonicity properties, similar to the biharmonicity of quaternionic slice regular functions?
\end{quest}

Let us present some examples of slice Fueter-regular functions.

\begin{examples}
(1) Let $(F_1,F_2):\R^2\setminus\R\to\oo^2$ be a stem function of class $\mscr{C}^1$ and let $f:\oo\setminus\R\to\oo$ be the slice function induced by $(F_1,F_2)$. Define $\R_+:=\{t\in\R\,:\,t>0\}$. Suppose that there exist four functions $\gamma,\delta,\eta,\xi:\R_+\to\oo$ of class $\mscr{C}^1$ such that
\begin{align}
F_1(x_0,x_1)&=\gamma(x_0)+\delta(x_1),\label{1}\\
F_2(x_0,x_1)&=x_1\eta(x_0)+x_1\xi(x_1)\label{2}
\end{align} 
if $x_1>0$. 
Thanks to equations \eqref{eq:variant-CR} and some elementary computations, one easily deduces that $f=\I((F_1,F_2))$, with $(F_1,F_2)$ of the forms \eqref{1} and \eqref{2}, is slice Fueter-regular if and only if there exist octonionic constants $a,b,c,d$ such that
\begin{align}
F_1(x_0,x_1)&=(3x_0^2-x_1^2)a+3x_0b+c,\label{eq:3x0}\\
F_2(x_0,x_1)&\textstyle=x_1\left(2x_0a+b+\frac{d}{x_1^3}\right)\label{eq:x12x0}
\end{align}
if $x_1>0$. The slice Fueter-regular function $f=\I((F_1,F_2)):\oo\setminus\R\to\oo$ induced by the stem function defined by \eqref{eq:3x0} and \eqref{eq:x12x0} has the following form:
\begin{equation}\label{ex1}
\textstyle
f(x)=(3\mr{Re}(x)^2+2\mr{Re}(x)\mr{Im}(x)+\mr{Im}(x)^2)a+3\mr{Re}(x)b+\mr{Im}(x)b+c+\frac{\mr{Im}(x)}{|\mr{Im}(x)|^3}\,d.
\end{equation}
Note that, if $d=0$, then the above function $f$ extends to a slice Fueter-regular function on the whole $\oo$. Such extensions include the example given in \cite[Example 4.4]{JRS2019}.

Equations \eqref{eq:3x0} and \eqref{eq:x12x0} imply that a slice Fueter-regular function $f:\oo\setminus\R\to\oo$ depends only on $\mr{Im}(x)$, i.e. it is of the form $f(x)=\varphi(\mr{Im}(x))$ for some $\varphi:\oo\setminus\R\to\oo$ of class $\mscr{C}^1$ if and only if $f(x)=c+\frac{\mr{Im}(x)}{|\mr{Im}(x)|^3}\,d$ for some $c,d\in\oo$.

(2) If in the preceding example we replace \eqref{2} with the following
\begin{equation}\label{3}
F_2(x_0,x_1)=\eta(x_0)+x_1\xi(x_1)\quad \text{ for $x_1>0$},
\end{equation}
then $f=\I((F_1,F_2))$, with $(F_1,F_2)$ of the forms \eqref{1} and \eqref{3}, is slice Fueter-regular if and only if there exist $b,c,d\in\oo$ such that
\begin{align}
F_1(x_0,x_1)&=3x_0b+c,\label{eq:3x0b}\\
F_2(x_0,x_1)&\textstyle=x_1b+\frac{d}{x_1^2}\label{eq:x1b}
\end{align}
if $x_1>0$. Note that equations \eqref{eq:3x0b} and \eqref{eq:x1b} are particular cases of \eqref{eq:3x0} and \eqref{eq:x12x0}. However, \eqref{eq:3x0b} and \eqref{eq:x1b} prove that a slice Fueter-regular function $f:\oo\setminus\R\to\oo$ depends only on $\mr{Re}(x)$ if and only if it is constant.

(3) Slice Fueter powers associated to a slice Fueter-regular function, we will introduce in Definitions \ref{def:sfpf} and \ref{def:sfrpf} below, are examples of homogeneous slice Fueter-regular functions, see Corollaries \ref{cor:taylor} and \ref{cor:laurent} below. \bs 
\end{examples}


For the comparison between Fueter-regular functions and slice regular functions on Clifford algebras $\R_n=C\ell(0,n)$ for $n\geq2$, we refer the reader to papers \cite{Pe2011,Pe2013,Pe2019}.


\subsection{Slice Fueter operator}\label{subsec:sfo}

Given $p,q,h\in\N$ with $p>0$ and $q>0$, and a non-empty open subset $O$ of $\R^p$, we denote $\mscr{C}^h(O,\R^q)$ the set of all functions from $O$ to $\R^q$ of class $\mscr{C}^h$.

Let us rename the real vector basis $(1,i,j,k,\ell,\ell i,\ell j,\ell k)$ of $\oo$ by $(1,e_1,e_2,e_3,e_4,e_5,e_6,e_7)$, i.e.
\[
(e_1,e_2,e_3,e_4,e_5,e_6,e_7):=(i,j,k, \ell, \ell i, \ell j,\ell k).
\]

For each $m,n\in\{1,\ldots,7\}$ with $m<n$, we denote $L_{mn}:\mscr{C}^1(\oo,\oo)\to\mscr{C}^0(\oo,\oo)$ the usual spherical tangential operator defined by
\[
L_{mn}:=x_m\frac{\partial}{\partial x_n}-x_n\frac{\partial}{\partial x_m}.
\]
With regard to the preceding definition of $L_{mn}$, we implicitly mean that, given any non-empty open subset $O$ of $\oo$ and any $f\in\mscr{C}^1(O,\oo)$, $L_{mn}(f)$ denotes the function in $\mscr{C}^0(O,\oo)$ given by $L_{mn}(f)(x)=x_m\frac{\partial f}{\partial x_n}(x)-x_n\frac{\partial f}{\partial x_m}(x)$ for each $x\in O$. Note that, if $O'$ is a non-empty open subset of $O$, then $L_{mn}(f)=0$ on $O'$ if and only if $L_{mn}(f|_{O'})=0$, i.e. $L_{mn}(f|_{O'})$ is costantly equal to zero on $O'$. The same clarification is valid for all differential operators we will consider below.  

Given any $f\in\mscr{C}^1(\Omega_D,\oo)$ and any $x\in \Omega_D\setminus\R$, the differential at $x$ of the restriction $f|_{\sph_x}$ of $f$ to $\sph_x$ vanishes if and only if $\nabla f(x)\in\mr{Span}(1,x)$, which is in turn equivalent to say that $L_{mn}(f)(x)=0$ for each $n,m\in\{1,\ldots,7\}$ with $m<n$. Since $\sph_x$ is connected, it follows that $f|_{\sph_x}$ is constant if and only if 
\begin{equation}\label{eq:Lmn}
\text{$L_{mn}(f)=0$ on $\sph_x$ for each $n,m\in\{1,\ldots,7\}$ with $m<n$.}    
\end{equation}

The spherical Dirac operator is a well-known differential operator used in Clifford analysis, see \cite[Section 8.7]{BDS1982} and \cite[Chapter II, Section 0.1.4, p.139]{DSS1992}. See also \cite[Sections 3,4,6]{Pe2019} for a recent development.

In the next definition, we introduce the concept of spherical Dirac operator over octonions: the defining formula is essentially the usual one, but now we have to choose the `right' position of the parentheses. To the best of our knowledge, the concept we are going to introduce is new.

\begin{defn}\label{def:sdo}
We define the \emph{(octonionic) spherical Dirac operator} $\Gamma:\mscr{C}^1(\oo,\oo)\to\mscr{C}^0(\oo,\oo)$ by setting
\[
\Gamma:=-\sum_{m<n}e_m(e_nL_{mn}),
\]
where the symbol `$\sum_{m<n}$' denotes `$\sum_{m,n\in\{1,\ldots,7\},m<n}$'. \bs
\end{defn}

Note that, if $f\in\mscr{C}^1(\Omega_D,\oo)$, then
\[
\Gamma(f)(x)=-\sum_{m<n}e_m\left(e_n\left(x_m\frac{\partial f}{\partial x_n}(x)-x_n\frac{\partial f}{\partial x_m}(x)\right)\right)
\]
for each $x=(x_0,\ldots,x_7)\in\Omega_D$.

Making use of the first Moufang identity (see \cite[(3.4), p. 28]{Sch1966}), we obtain:

\begin{lem}\label{lem:moufang}
Let $f:\Omega_D\to\oo$ be a slice function of class $\mscr{C}^1$. Then it holds:
\begin{align}\label{eq:6}
f_s'={\textstyle\frac{1}{6}}\mr{Im}^{-1}\Gamma(f) \quad  \text{ on $\Omega_D\setminus\R$},
\end{align}
where `$\,\mr{Im}^{-1}\Gamma(f)$' denotes the function $\Omega_D\setminus\R\to\oo$, $x\mapsto(\mr{Im}(x))^{-1}(\Gamma(f)(x))$.
\end{lem}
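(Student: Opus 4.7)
The plan is to compute $\Gamma(f)$ directly from the coordinate expression for $f$ supplied by Theorem~\ref{thm:1}, and then to use left alternativity and the left Moufang identity of $\oo$ to collapse the non-associative nested products. Starting from \eqref{eq:FF}, for $x = x_0 + \sum_{h=1}^{7} x_h e_h$ with $r := (x_1^2+\cdots+x_7^2)^{1/2} > 0$ the slice function $f$ admits the explicit form
\[
f(x) \;=\; F_1(x_0,r) \;+\; \sum_{h=1}^{7} \tfrac{x_h}{r}\,e_h\,F_2(x_0,r).
\]
Because each operator $L_{mn}$ (with $m<n$ in $\{1,\ldots,7\}$) only differentiates in the variables $x_m,x_n$, and the terms $F_1(x_0,r)$ and $F_2(x_0,r)$ depend on these variables only through the Euclidean radius $r$, a direct calculation yields $L_{mn}\bigl(F_i(x_0,r)\bigr)=0$ for $i=1,2$ together with $L_{mn}(x_p/r) = (x_m\delta_{pn}-x_n\delta_{pm})/r$. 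Collecting terms one obtains
\[
L_{mn}(f) \;=\; \tfrac{1}{r}\bigl(x_m e_n - x_n e_m\bigr)\,F_2(x_0,r).
\]

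Next I would apply $e_n$ and then $e_m$ on the left, keeping the bracketing dictated by Definition~\ref{def:sdo}. Left alternativity $e_n(e_n y) = -y$ immediately annihilates the $x_m\,e_n\cdot e_n$ contribution, giving $e_n\bigl(L_{mn}(f)\bigr) = \tfrac{1}{r}\bigl(-x_m F_2 - x_n\,e_n(e_m F_2)\bigr)$. The key point, and the one place where the non-associativity of $\oo$ genuinely has to be tamed, is the triple bracket $e_m\bigl(e_n(e_m F_2)\bigr)$: the left Moufang identity $a\bigl(b(ac)\bigr) = \bigl((ab)a\bigr)c$, together with flexibility, with the anticommutativity $e_m e_n = -e_n e_m$ for $m\neq n$, and with $e_m^2=-1$, first gives $(e_m e_n)e_m = e_n$ and therefore $e_m\bigl(e_n(e_m F_2)\bigr) = e_n F_2$. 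Consequently
\[
e_m\bigl(e_n\, L_{mn}(f)\bigr) \;=\; -\tfrac{1}{r}\bigl(x_m e_m + x_n e_n\bigr)\,F_2(x_0,r).
\]

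The final step is a counting argument followed by a short alternativity computation. For each $k\in\{1,\ldots,7\}$, the monomial $x_k e_k$ appears once in each of the $7-k$ pairs $(k,n)$ with $n>k$ and once in each of the $k-1$ pairs $(m,k)$ with $m<k$, i.e.\ exactly $6$ times; summing over $m<n$ therefore yields
\[
\Gamma(f)(x) \;=\; \tfrac{1}{r}\!\left(\sum_{m<n}(x_m e_m + x_n e_n)\right)\!F_2(x_0,r) \;=\; \tfrac{6}{r}\,\mr{Im}(x)\,F_2(x_0,r).
\]
Since $\mr{Im}(x)=rI$ with $I:=\mr{Im}(x)/r\in\sph$, one has $\mr{Im}(x)^{-1}=-I/r$, and left alternativity gives $\mr{Im}(x)^{-1}\bigl(\mr{Im}(x)\,F_2(x_0,r)\bigr) = F_2(x_0,r)$; hence $\tfrac{1}{6}\,\mr{Im}(x)^{-1}\,\Gamma(f)(x) = F_2(x_0,r)/r = f'_s(x)$, which is the identity \eqref{eq:6}. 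The main obstacle I would anticipate is precisely the treatment of the Moufang triple bracket: without the identity $e_m(e_n(e_m F_2))=e_n F_2$ the sum over $m<n$ does not collapse to a scalar multiple of $\mr{Im}(x)\,F_2$ and the whole argument stalls.
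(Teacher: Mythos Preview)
Your proof is correct and follows essentially the same route as the paper: compute $L_{mn}(f)=(x_me_n-x_ne_m)f_s'(x)$, use the left Moufang identity to simplify $e_m(e_n(e_m\,\cdot))$, and finish with the same counting argument. The only cosmetic difference is that the paper invokes the decomposition $f=f_s^\circ+\mr{Im}\cdot f_s'$ and the constancy of $f_s^\circ,f_s'$ on each $\sph_x$ to see that $L_{mn}$ annihilates them, whereas you verify this directly in coordinates; note also that the coordinate expression you use is simply the definition of a slice function rather than \eqref{eq:FF} (which concerns the four-component $O(3)$-stem function on $\R^4$).
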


It is worth noting that, given $f$ as in the statement of the preceding lemma, $f_s^\circ=f-\frac{1}{6}\Gamma(f)$ on $\Omega_D$. In the case $f(x)=\mr{Im}(x)$, \eqref{eq:6} reduces to $\Gamma(\mr{Im}(x))=6\mr{Im}(x)$ on $\oo$. 

As a consequence of Lemma \ref{lem:moufang}, we obtain the following {\it differential} sliceness criterion; for other sliceness criterions, see \cite[Lemma 3.2]{GP2014-global} and Lemma \ref{lem:sliceness-criterion} below.

\begin{lem}[Differential sliceness criterion]\label{lem:dsc}
Let $f:\Omega_D\to\oo$ be a function of class $\mscr{C}^2$. The following assertions are equivalent.
\begin{itemize}
 \item[(1)] $f$ is a slice function.
 \item[(2)] $f|_{\Omega_D\setminus\R}\in\bigcap_{m<n}\ker\big(L_{mn}(\mr{Im}^{-1}\Gamma)\big)\cap\bigcap_{m<n}\ker\big(L_{mn}(\mr{Id}-\frac{1}{6}\Gamma)\big)$ or, equivalently
\begin{equation}
\left\{
\begin{array}{ll}\label{eq:dsc}
L_{mn}(\mr{Im}^{-1}\Gamma(f))=0 & \text{for each $m,n\in\{1,\ldots,7\}$ with $m<n$,}
\vspace{.7em}\\
L_{mn}(f-{\textstyle\frac{1}{6}}\Gamma(f))=0 & \text{for each $m,n\in\{1,\ldots,7\}$ with $m<n$}
\end{array}
\right.
\end{equation}
on $\Omega_D\setminus\R$.
\end{itemize}
\end{lem}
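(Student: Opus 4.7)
\emph{Setup and strategy.} The plan is to exploit the equivalence recalled immediately before the statement: for any $g\in\mscr{C}^1(\Omega_D,\oo)$ and any $x\in\Omega_D\setminus\R$, the simultaneous vanishing $L_{mn}(g)=0$ on $\sph_x$ for all $m<n$ is equivalent (via the connectedness of $\sph_x$) to $g|_{\sph_x}$ being constant; see \eqref{eq:Lmn}. Thus condition $(2)$ of the lemma is just the compact reformulation of the assertion that both $g:=f-\frac{1}{6}\Gamma(f)$ and $h:=\mr{Im}^{-1}\Gamma(f)$ are constant on every conjugation sphere $\sph_x\subset\Omega_D\setminus\R$. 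This reformulation will drive both implications.

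\emph{The direction $(1)\Rightarrow(2)$.} The argument is immediate from Lemma~\ref{lem:moufang} together with the identity $f_s^\circ=f-\frac{1}{6}\Gamma(f)$ noted just after it. Indeed, both $f_s^\circ=\I((F_1,0))$ and $f_s'=\I((F'_2,0))$ are slice functions induced by stem functions with vanishing second component, so they are automatically constant on every sphere $\sph_x$, and hence annihilated by all $L_{mn}$.

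\emph{The direction $(2)\Rightarrow(1)$.} The core is to construct directly a stem function inducing $f$ from the sphere-constant values of $g$ and $h$. Fixing an arbitrary $I_0\in\sph$, I set for $(\alpha,\beta)\in D$ with $\beta>0$
\[
F_1(\alpha,\beta):=g(\alpha+\beta I_0),\qquad F_2(\alpha,\beta):=\tfrac{\beta}{6}\,h(\alpha+\beta I_0),
\]
then extend to $\beta<0$ by the stem parities $F_1(\alpha,-\beta)=F_1(\alpha,\beta)$ and $F_2(\alpha,-\beta)=-F_2(\alpha,\beta)$, and finally set $F_1(\alpha,0):=f(\alpha)$, $F_2(\alpha,0):=0$ if $D\cap\R\neq\emptyset$. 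Sphere-constancy of $g$ and $h$ ensures independence of the choice of $I_0$, and the parities are built into the construction, so $F=(F_1,F_2)$ is a stem function. To verify $f=\I(F)$, at any $y=\alpha+\beta I\in\Omega_D$ with $\beta>0$ I will rewrite $\tfrac{1}{6}\Gamma(f)(y)=\tfrac{1}{6}\mr{Im}(y)\,h(y)=\tfrac{\beta}{6}(Ih(y))=I\cdot\tfrac{\beta}{6}h(y)=I\,F_2(\alpha,\beta)$, obtaining $f(y)=g(y)+\tfrac{1}{6}\Gamma(f)(y)=F_1(\alpha,\beta)+I\,F_2(\alpha,\beta)$; the $\beta<0$ case reduces to $\beta>0$ via $\alpha+\beta I=\alpha+(-\beta)(-I)$ and the parities of $F$, and the case $\beta=0$ is trivial.

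\emph{The main obstacle.} The only delicate point is the final chain of equalities above: because $\oo$ is non-associative, every product of three octonions must be arranged so that either the alternative law applies directly (as in $\mr{Im}(y)(\mr{Im}^{-1}(y)\Gamma(f)(y))=\Gamma(f)(y)$) or at least one of the three factors is a real scalar. Beyond this algebraic bookkeeping no analytic difficulty is expected; the $\mscr{C}^2$-assumption enters only to give meaning to $L_{mn}(\Gamma(f))$ and $L_{mn}(\mr{Im}^{-1}\Gamma(f))$ in the statement.
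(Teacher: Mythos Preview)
Your proposal is correct and follows essentially the same approach as the paper's proof. The only cosmetic differences are that the paper first invokes Lemma~\ref{lem:restriction} to reduce to the product-domain case (so the real axis need not be handled separately), and it defines $F_1(x_0,x_1):=g(x_0+x_1i)$, $F_2(x_0,x_1):=x_1h(x_0+x_1i)$ directly for all $(x_0,x_1)\in D$, letting the stem parities follow from the sphere-constancy of $g$ and $h$ rather than imposing them by hand.
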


Let us introduce the concept of slice Fueter operator. 

\begin{defn}
We define the \emph{slice Fueter operator $\overline{\vartheta}_F:\mscr{C}^1(\oo\setminus\R,\oo)\to\mscr{C}^0(\oo\setminus\R,\oo)$} as follows:
\begin{equation}\label{eq:thetaf}
\overline{\vartheta}_F:=\frac{\partial}{\partial x_0}-\mr{Im}^{-1}\mr{E}-{\textstyle\frac{1}{6}}\mr{Im}^{-1}\Gamma,
\end{equation}
where $\mr{E}$ is the Euler operator $\sum_{h=1}^7x_h\frac{\partial}{\partial x_h}$. \bs
\end{defn}

Note that $\frac{\partial}{\partial x_0}-\mr{Im}^{-1}\mr{E}$ is the operator $\overline\vartheta$ introduced in \cite{GP2014-global}, so we have:
\begin{equation}\label{eq:thetaf'}
\overline{\vartheta}_F=\overline{\vartheta}-{\textstyle\frac{1}{6}}\mr{Im}^{-1}\Gamma.
\end{equation}

Given any $f\in\mscr{C}^1(\Omega_D,\oo)$, we have:
\[
\overline{\vartheta}_F(f)=\frac{\partial f}{\partial x_0}(x)-\mr{Im}(x)^{-1}\left(\sum_{h=1}^7x_h\frac{\partial f}{\partial x_h}(x)\right)-\mr{Im}(x)^{-1}(\Gamma(f)(x)).
\]

In Theorem~2.2(ii) of \cite{GP2014-global}, it is proven that, if $f$ is slice, then $\overline{\partial}f=\overline\vartheta(f)$ on $\Omega_D\setminus\R$. Combining the latter equality with equivalence $(1)\Longleftrightarrow(2\mr{a})$ of Theorem \ref{thm:2} and with Lemma \ref{lem:dsc}, we obtain:

\begin{thm}\label{thm:sfo}
Let $f:\Omega_D\to\oo$ be a function of class $\mscr{C}^3$. The following assertions are equivalent:
\begin{itemize}
 \item[$(1)$] $f$ is a slice Fueter-regular function.
 \item[$(2)$] $f$ is a slice function and $\overline{\vartheta}_F(f)=0$ on $\Omega_D\setminus\R$.
 \item[$(3)$] $f$ satisfies \eqref{eq:dsc} and $\overline{\vartheta}_F(f)=0$ on $\Omega_D\setminus\R$.
\end{itemize}
\end{thm}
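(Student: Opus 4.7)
The plan is to prove the chain $(1)\Leftrightarrow(2)\Leftrightarrow(3)$ by combining the factorization \eqref{eq:thetaf'} of the slice Fueter operator with the three results the statement advertises: equivalence $(1)\Leftrightarrow(2\mr{a})$ of Theorem \ref{thm:2}, Theorem~2.2(ii) of \cite{GP2014-global}, and the differential sliceness criterion of Lemma \ref{lem:dsc}.

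The central identity to establish is that, \emph{for any slice function $f\in\mscr{C}^1(\Omega_D,\oo)$},
\[
\overline{\vartheta}_F(f)=\overline{\partial}f-f'_s \quad \text{on } \Omega_D\setminus\R.
\]
I would derive this in a single line: Theorem~2.2(ii) of \cite{GP2014-global} gives $\overline{\partial}f=\overline{\vartheta}(f)$ on $\Omega_D\setminus\R$, while Lemma \ref{lem:moufang} gives $f'_s=\tfrac{1}{6}\mr{Im}^{-1}\Gamma(f)$ there; substituting both into \eqref{eq:thetaf'} yields the claimed identity. The point is that the two non-elementary pieces of $\overline{\vartheta}_F$ package exactly the two intrinsic `slice' operators $\overline{\partial}$ and $f'_s$ when $f$ is slice, so $\overline{\vartheta}_F(f)=0$ is nothing but the equation $\overline{\partial}f=f'_s$ in disguise.

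For $(1)\Leftrightarrow(2)$, I would proceed as follows. Assume $(1)$. Since $f$ is slice Fueter-regular, by definition $\mc{F}=\Phi^{-1}(F)$ is of class $\mscr{C}^1$, so in particular $f$ is an $O(3)$-slice function and hence, by Theorem \ref{thm:1}, a slice function; Theorem \ref{thm:2} $(1)\Rightarrow(2\mr{a})$ then supplies $\overline{\partial}f=f'_s$ on $\Omega_D\setminus\R$, which by the central identity is $\overline{\vartheta}_F(f)=0$. Conversely, assume $(2)$. The hypothesis $f\in\mscr{C}^3(\Omega_D,\oo)$ combined with the recovery formulas $F_1(x_0,x_1)=\tfrac12(f(x_0+x_1J)+f(x_0-x_1J))$, $F_2(x_0,x_1)=-\tfrac12 J(f(x_0+x_1J)-f(x_0-x_1J))$ for any fixed $J\in\sph$, immediately forces $F_1\in\mscr{C}^3\subset\mscr{C}^2$ and $F_2\in\mscr{C}^3$; and the central identity turns $\overline{\vartheta}_F(f)=0$ into $\overline{\partial}f=f'_s$ on $\Omega_D\setminus\R$, so Theorem \ref{thm:2} $(2\mr{a})\Rightarrow(1)$ yields slice Fueter-regularity.

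For $(2)\Leftrightarrow(3)$ I would invoke Lemma \ref{lem:dsc} directly: since $f\in\mscr{C}^3\subset\mscr{C}^2$, being a slice function is equivalent to fulfilling \eqref{eq:dsc} on $\Omega_D\setminus\R$, so $(2)$ and $(3)$ differ only by this reformulation of sliceness. I expect no real obstacle here; the only mild bookkeeping issue in the whole argument is making sure that the $\mscr{C}^3$-regularity assumed in this theorem is strong enough to match the $\mscr{C}^2$-regularity of $F_1$ and $\mscr{C}^3$-regularity of $F_2$ demanded in Theorem \ref{thm:2}$(2\mr{a})$ and the $\mscr{C}^2$-regularity demanded in Lemma \ref{lem:dsc}, which follows from the recovery formulas above.
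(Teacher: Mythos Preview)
Your proposal is correct and follows essentially the same approach as the paper: both hinge on the identity $\overline{\vartheta}_F(f)=\overline{\partial}f-f'_s$ for slice $f$ (via \eqref{eq:thetaf'}, Theorem~2.2(ii) of \cite{GP2014-global}, and Lemma~\ref{lem:moufang}), then invoke Theorem~\ref{thm:2}$(1)\Leftrightarrow(2\mr{a})$ and Lemma~\ref{lem:dsc}. The only cosmetic difference is that the paper argues the cycle $(1)\Rightarrow(2)\Rightarrow(3)\Rightarrow(1)$ rather than your two equivalences $(1)\Leftrightarrow(2)$ and $(2)\Leftrightarrow(3)$, and you are slightly more explicit about the regularity bookkeeping.
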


The reader observes that above equivalence $(1)\Longleftrightarrow(3)$ allows to describe the slice Fueter-regular functions $f$ as the $\mscr{C}^2$-solutions of the second order differential system given by \eqref{eq:dsc} and $\overline{\vartheta}_F(f)=0$, without any request concerning the sliceness of $f$.


\subsection{Slice Borel-Pompeiu and Cauchy integral formulas}\label{subsec:cauchy} {\it Fix $I\in\sph$.} 
Let $\pi_I:\oo\to\C_I$ be the orthogonal projection of $\oo$ onto $\C_I$. For each $\xi\in\oo$, we define $\xi_I:=\pi_I(\xi)$, $\xi_I^\perp:=\xi-\xi_I$, the slice function $g_{I,\xi}:\oo\to\oo$ by
\begin{equation}\label{eq:gIxi}
g_{I,\xi}(x):=(\xi_I-x)\cdot(\overline{\xi_I}-\overline{x})+|\xi_I^\perp|^2=|x|^2-x\overline{\xi_I}-\overline{x}\xi_I+|\xi|^2\,,
\end{equation}
the subset $\sph_{I,\xi}$ of $\oo$ by 
\[
\sph_{I,\xi}:=
\left\{
\begin{array}{ll}
\displaystyle
\sph_\xi & \text{if $\xi\in\C_I$}\vspace{.5em}\\
\displaystyle
\emptyset & \text{if $\xi\not\in\C_I$}
\end{array}
\right.
\]
and the closed subset $\Gamma_I$ of $\oo^2$ by
\[
\Gamma_I:=\{(x,\xi)\in\oo^2\,:\,\xi\in\C_I,x\in\sph_\xi\}=\bigcup_{\xi\in\oo}(\sph_{I,\xi}\times\{\xi\}).
\]
Note that $(\oo^2\setminus\Gamma_I)\cap(\oo\times\{\xi\})=(\oo\setminus\sph_{I,\xi})\times\{\xi\}$ for each $\xi\in\oo$.

\begin{lem} \label{lem:NgIxi}
Given any $\xi\in\oo$, the following assertions hold.
\begin{enumerate}
 \item $N(g_{I,\xi})(x)=|\Delta_{\xi_I}(x)|^2+2|\xi_I^\perp|^2\big(|x|^2-2\mr{Re}(x)\mr{Re}(\xi_I)+|\xi_I|^2\big)+|\xi_I^\perp|^4\geq0$ for each $x\in\oo$.
 \item The zero set of $N(g_{I,\xi})$ is equal to $\sph_{I,\xi}$.
\end{enumerate}
\end{lem}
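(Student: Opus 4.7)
The plan is to reduce part~(1) to a computation inside the associative commutative slice $\C_I$, then extend to all of $\oo$ by a slice-preservation argument. Two structural facts enable this. First, $g_{I,\xi}$ is a $\C_I$-preserving slice function, since it is the sum of the real constant $|\xi_I^\perp|^2$ and the slice product of the $\C_I$-preserving slice functions $x\mapsto\xi_I-x$ and $x\mapsto\overline{\xi_I}-\overline{x}$; in particular, this slice product coincides with the pointwise product on $\C_I$. Second, $N(g_{I,\xi})$ is always slice-preserving (as a normal function), so by uniqueness of its inducing stem function it is determined by its restriction to $\C_I$.

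I would first extract the stem function of $g_{I,\xi}$. Expanding $|x|^2-x\overline{\xi_I}-\overline{x}\xi_I+|\xi|^2$ for $x=\alpha+\beta J$ and using that real scalars associate freely in $\oo$ (so terms such as $J\cdot(cI)=c(JI)$ incur no associator) yields the stem function $(G_1,G_2)$ with $G_1=\alpha^2+\beta^2-2\alpha\,\mr{Re}(\xi_I)+|\xi|^2$ and $G_2=2\beta\,\mr{Im}(\xi_I)$. Specializing to $J=I$, together with the stem function $(G_1,-G_2)$ of $g_{I,\xi}^c$, gives
\[
g_{I,\xi}(z)=|z-\xi_I|^2+|\xi_I^\perp|^2,\qquad g_{I,\xi}^c(z)=|z-\overline{\xi_I}|^2+|\xi_I^\perp|^2\qquad(z\in\C_I).
\]
Writing $p:=|\xi_I^\perp|^2$ and using in the field $\C_I$ the classical identities $(z-\xi_I)(z-\overline{\xi_I})=\Delta_{\xi_I}(z)$ and $|z-\xi_I|^2+|z-\overline{\xi_I}|^2=2(|z|^2-2\,\mr{Re}(z)\,\mr{Re}(\xi_I)+|\xi_I|^2)$, the pointwise product $N(g_{I,\xi})(z)=(|z-\xi_I|^2+p)(|z-\overline{\xi_I}|^2+p)$ expands exactly to the right-hand side of part~(1). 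Both sides are slice-preserving functions on $\oo$: $\Delta_{\xi_I}$ is a real-coefficient polynomial in $x$, so its stem components $F_1,F_2$ are real-valued and $|\Delta_{\xi_I}(x)|^2=F_1^2+F_2^2$ is slice-preserving, while $|x|^2$ and $\mr{Re}(x)$ are obviously so. Two slice-preserving functions agreeing on $\C_I$ share the same stem function, hence agree on all of $\oo$, establishing part~(1). Non-negativity follows from $|x|^2-2\,\mr{Re}(x)\,\mr{Re}(\xi_I)+|\xi_I|^2=(\mr{Re}(x)-\mr{Re}(\xi_I))^2+|\mr{Im}(x)|^2+|\mr{Im}(\xi_I)|^2\geq 0$.

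Part~(2) then reads off from the formula. If $\xi\notin\C_I$, then $|\xi_I^\perp|^4>0$, so $N(g_{I,\xi})>0$ on $\oo$ and the zero set is empty, matching $\sph_{I,\xi}=\emptyset$. If $\xi\in\C_I$, then $\xi_I=\xi$, the formula collapses to $|\Delta_\xi(x)|^2$, and its zero set is precisely $\sph_\xi=\sph_{I,\xi}$. The main technical care lies in the bookkeeping of the stem function of $g_{I,\xi}$ in the non-associative setting; the saving is that $\mr{Im}(\xi_I)$ lies in the one-dimensional real span of $I$, so every product arising in the expansion has at least one real factor and the octonionic associator vanishes.
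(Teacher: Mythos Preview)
Your proof is correct and reaches the same formula and the same non-negativity/zero-set analysis as the paper, but via a different route. The paper expands $N(g_{I,\xi})(x)$ directly as a slice product for general $x\in\oo$, using the identities $N(\xi_I-x)=\Delta_{\xi_I}(x)$ and $N(\overline{\xi_I}-\overline{x})=\overline{\Delta_{\xi_I}(x)}$ to simplify the four cross terms globally. You instead restrict to $\C_I$, where $g_{I,\xi}$ and $g_{I,\xi}^c$ are real-valued and the slice product becomes the ordinary commutative product, compute the formula there, and then extend to $\oo$ by observing that both sides are slice-preserving and invoking the identity principle for slice functions. Your approach trades a small amount of extra bookkeeping (extracting the stem function, checking slice-preservation of each term on the right-hand side) for a computation in a commutative field; the paper's approach is self-contained at the level of slice-product algebra but requires tracking non-commutative products carefully. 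Both are perfectly sound, and your reduction-then-extension strategy is exactly the mechanism the paper itself uses later for the Cauchy integrands and the Fueter powers.
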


{\it Fix $J\in\sph$ with $I\perp J$ and define $\II:=(I,J)\in\mc{N}$.} Recall that $\Omega_{\II}=\Omega_D\cap\hh_{\II}$. Assume that $D$ is bounded and its boundary $\partial D$ in $\R^2$ is of class $\mscr{C}^1$. It follows that $\Omega_{\II}$ is bounded and its boundary $\partial\Omega_{\II}$ in $\hh_{\II}$ is of class~$\mscr{C}^1$ as well. We denote $\n_{\II}:\partial\Omega_{\II}\to\hh_{\II}\subset\oo$ the outer unit normal vector field of $\partial\Omega_{\II}$ in $\hh_{\II}$. We indicate $\mr{ds}_{\II}(\xi)$ the surface element of $\partial\Omega_{\II}$ and $\mr{dv}_{\II}(\xi)$ the volume element of $\Omega_{\II}\subset\hh_{\II}$.

{\it Fix a slice function $f:\Omega_D\to\oo$ of class $\mscr{C}^1$.} Thanks to Lemma \ref{lem:NgIxi}, given any $\xi\in\partial\Omega_{\II}$, we can define the slice function $S_{f,\II,\xi}:\oo\setminus\sph_{I,\xi}\to\oo$ by setting
\[
S_{f,\II,\xi}(x):=\frac{1}{2\pi^2}(g_{I,\xi}(x)^{-\sss\bullet})^{{\sss\bullet\,}2}\cdot\big((\overline{\xi}-\overline{x})(\n_{\II}(\xi)f(\xi))\big)
\]
or, equivalently,
\begin{equation}\label{eq:SfIxi}
S_{f,\II,\xi}(x):=\frac{1}{2\pi^2}(N(g_{I,\xi})(x))^{-2}\cdot(g^c_{I,\xi}(x))^{{\sss\bullet\,}2}\cdot\big((\overline{\xi}-\overline{x})(\n_{\II}(\xi)f(\xi))\big),
\end{equation}
where $(g_{I,\xi}(x)^{-\sss\bullet})^{{\sss\bullet\,}2}$ and $(g^c_{I,\xi}(x))^{{\sss\bullet\,}2}$ denote the slice squares of $g_{I,\xi}^{-\sss\bullet}$ and $g_{I,\xi}^c$ respectively, i.e. $g_{I,\xi}^{-\sss\bullet}\cdot g_{I,\xi}^{-\sss\bullet}$ and $g_{I,\xi}^c\cdot g_{I,\xi}^c$. Note that  $(N(g_{I,\xi}))^{-2}$ is a slice preserving function, so it belongs to the center of $\mc{S}(\oo\setminus\sph_{I,\xi},\oo)$. Hence it is not necessary to parenthesize the three-terms slice product in definition \eqref{eq:SfIxi}. Furthermore, for each octonionic constant $a$, the pointwise product function $x\mapsto \overline{\xi}a-\overline{x}a=(\overline{\xi}-\overline{x})a$ coincides with the slice product $x\mapsto(\overline{\xi}-\overline{x})\cdot a$. As a consequence, the function $x\mapsto(\overline{\xi}-\overline{x})(\n_{\II}(\xi)f(\xi))$ is slice and hence also the function $x\mapsto S_{f,\II,\xi}(x)$ is.

By Lemma \ref{lem:NgIxi}, given any $\xi\in\Omega_\II$, we can also define the slice function $V_{f,\II,\xi}:\oo\setminus\sph_{I,\xi}\to\oo$ by setting
\[
V_{f,\II,\xi}(x):=\frac{1}{2\pi^2}(g_{I,\xi}(x)^{-\sss\bullet})^{{\sss\bullet\,}2}\cdot\big((\overline{\xi}-\overline{x})(\DD_{\II}f(\xi))\big)
\]
or, equivalently,
\begin{equation}\label{eq:VfIxi}
V_{f,\II,\xi}(x):=\frac{1}{2\pi^2}(N(g_{I,\xi})(x))^{-2}\cdot(g^c_{I,\xi}(x))^{{\sss\bullet\,}2}\cdot\big((\overline{\xi}-\overline{x})(\DD_{\II}f(\xi))\big).
\end{equation}

We define
\begin{equation}\label{eq:partial*}
\partial^*\Omega_{\II}:=(\oo^2\setminus\Gamma_I)\cap(\oo\times\partial\Omega_{\II})=\bigcup_{\xi\in\partial\Omega_{\II}}((\oo\setminus\sph_{I,\xi})\times\{\xi\})
\end{equation}
and
\begin{equation}\label{eq:omega*}
\Omega_{\II}^*:=(\oo^2\setminus\Gamma_I)\cap(\oo\times\Omega_{\II})=\bigcup_{\xi\in\Omega_{\II}}((\oo\setminus\sph_{I,\xi})\times\{\xi\}).
\end{equation}

\begin{defn} \label{def:integrands}
We define the \emph{slice Cauchy surface $(f,\II)$-integrand} $S_{f,\II}:\partial^*\Omega_{\II}\to\oo$ and the \emph{slice Cauchy volume $(f,\II)$-integrand} $V_{f,\II}:\Omega_\II^*\to\oo$  by
\begin{equation}\label{eq:SfI}
S_{f,\II}(x,\xi):=S_{f,\II,\xi}(x)
\end{equation}
and
\begin{equation}\label{eq:VfI}
V_{f,\II}(x,\xi):=V_{f,\II,\xi}(x). \text{ \bs}
\end{equation}
\end{defn}

Next result deals with pointwise-product formulas for slice Cauchy integrands $S_{f,\II}$ and~$V_{f,\II}$. 

\begin{lem}\label{lem:MN}
Let $Q_I:\oo^3\to\oo$, $M_I:\oo^3\to\oo$ and $N_I:\oo^2\to\oo$ be the functions defined by
\begin{align*}
Q_I(x,\xi,a):=\,&(|x|^2+|\xi|^2)^2a-2(|x|^2+|\xi|^2)(x(\xi_Ia)+\overline{x}(\overline{\xi_I}a))+\\
&+x^2(\xi_I^2a)+\overline{x}^2(\overline{\xi_I}^2a)+2|x|^2|\xi_I|^2a,\\
M_I(x,\xi,a):=\,&Q_I(x,\xi,\overline{\xi}a)-\overline{x}Q_I(x,\xi,a),\\
N_I(x,\xi):=\,&|\Delta_{\xi_I}(x)|^2+2|\xi_I^\perp|^2\big(|x|^2-2\mr{Re}(x)\mr{Re}(\xi_I)+|\xi_I|^2\big)+|\xi_I^\perp|^4.
\end{align*}

Then the zero set of $N_I$ coincides with $\Gamma_I$, and it holds:
\begin{equation}\label{eq:SfII}
S_{f,\II}(x,\xi)=\frac{M_I(x,\xi,\n_{\II}(\xi)f(\xi))}{2\pi^2N_I(x,\xi)^2} 
\quad \text{ for each $(x,\xi)\in\partial^*\Omega_\II$}
\end{equation}
and
\begin{equation}\label{eq:VfII}
V_{f,\II}(x,\xi)=\frac{M_I(x,\xi,\DD_{\II}f(\xi))}{2\pi^2N_I(x,\xi)^2}
\quad \text{ for each $(x,\xi)\in\Omega_\II^*$.}
\end{equation}
\end{lem}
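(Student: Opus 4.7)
The plan has three parts: identifying the zero set, reducing both formulas to a single pointwise identity, and verifying that identity by a stem-function computation.

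First, by Lemma~\ref{lem:NgIxi}(1) the closed formula given there for $N(g_{I,\xi})(x)$ is literally the expression defining $N_I(x,\xi)$, so $N_I(\cdot,\xi)=N(g_{I,\xi})$; Lemma~\ref{lem:NgIxi}(2) then identifies its zero set with $\sph_{I,\xi}$, and taking the union over $\xi\in\oo$ gives $\Gamma_I$, as required.

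Second, since $(N(g_{I,\xi})(x))^{-2}=N_I(x,\xi)^{-2}$ is real-valued, hence slice preserving and therefore central in the slice-function algebra, it may be pulled out of the three-term slice product of~\eqref{eq:SfIxi} without ambiguity; both \eqref{eq:SfII} and~\eqref{eq:VfII} then reduce to the single pointwise identity
\[
\bigl((g^c_{I,\xi})^{{\sss\bullet\,}2}\cdot((\overline{\xi}-\overline{x})\,a)\bigr)(x)=M_I(x,\xi,a)\qquad(a\in\oo),
\]
upon specializing $a=\n_{\II}(\xi)f(\xi)$ and $a=\DD_{\II}f(\xi)$ respectively.

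To establish this identity I would fix $x\in\Omega_D\setminus\sph_{I,\xi}$; the case $x\in\C_I$ is direct, so assume $x\notin\C_I$ and write $x=\alpha+\beta K$ with $\beta>0$, $K\in\sph$, and $\xi_I=a_0+a_1 I$. Setting $U:=|x|^2+|\xi|^2-2\alpha a_0$ and $V:=2a_1\beta$, a direct stem-function computation inside the associative subalgebra of $\oo$ generated by $I$ and $K$ (Artin's theorem) shows that $g_{I,\xi}$ is induced by the stem $(U,VI)$, hence $(g^c_{I,\xi})^{{\sss\bullet\,}2}$ by $(U^2+V^2,-2UVI)$, and $(\overline{\xi}-\overline{x})\,a$ by $((\overline{\xi}-\alpha)\,a,\,\beta a)$. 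The tensor-product slice-product formula, evaluated at $x$, then produces the left-hand side as $(U^2+V^2)(\overline{\xi}-\overline{x})\,a+2UV\beta(Ia)-2UV\,K(I((\overline{\xi}-\alpha)\,a))$. A parallel expansion of $Q_I$ at $x=\alpha+\beta K$ yields $Q_I(x,\xi,a)=(U^2+V^2)a-2UV\,K(Ia)$, the coefficients of $(Ia)$ and $(Ka)$ cancelling inside the third summand of $Q_I$. Alternativity in the form $K(K(Ia))=-Ia$ then gives $\overline{x}\,Q_I(x,\xi,a)=(U^2+V^2)\overline{x}\,a-2UV\alpha\,K(Ia)-2UV\beta(Ia)$, and subtracting this from $Q_I(x,\xi,\overline{\xi}a)$ produces $M_I(x,\xi,a)$.

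The comparison with the slice-product expression finally reduces the identity to $\alpha K(Ia)=K(I(\alpha a))$, which is immediate because $\alpha\in\R$. The main obstacle throughout is bookkeeping of non-associative parenthesizations: since $K(Ia)\neq(KI)a$ in general, one must track each product carefully, with every re-grouping that appears authorised either by alternativity of $\oo$ or by associativity of the subalgebra generated by $I$ and $K$.
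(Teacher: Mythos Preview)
Your argument is correct, but it proceeds along a genuinely different route from the paper's. After the common first step (identifying $N_I(\cdot,\xi)$ with $N(g_{I,\xi})$ via Lemma~\ref{lem:NgIxi}), the paper does not compute the slice product at a generic point. Instead it observes that $x\mapsto M_I(x,\xi,a)/N_I(x,\xi)^2$ is a slice function, invokes Lemma~\ref{lem:char-Cauchy} to reduce everything to the equality $\frac{M_I(y,\xi,a)}{N_I(y,\xi)^2}=\frac{(\overline{\xi}-\overline{y})a}{|\xi-y|^4}$ for $y\in\C_I$, and then applies the Identity Principle (Corollary~\ref{cor:ip}). On $\C_I$ both $y$ and $\xi_I$ lie in a single copy of $\C$, so Artin's theorem makes every product associative and one can factor directly: $Q_I(y,\xi,a)=Q_I(y,\xi,1)\,a$, $Q_I(y,\xi,1)=|y-\overline{\xi}|^4$, $N_I(y,\xi)=|y-\xi|^2|y-\overline{\xi}|^2$, and the identity drops out in a few lines.

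Your approach trades this reduction for a longer but self-contained computation: you evaluate the stem of $(g^c_{I,\xi})^{{\sss\bullet\,}2}\cdot((\overline{\xi}-\overline{x})a)$ at an arbitrary $x=\alpha+\beta K$ and match it term-by-term against the expansion of $M_I$. The upside is that you avoid both Lemma~\ref{lem:char-Cauchy} and the Identity Principle, so the lemma stands on its own; the downside is the heavier bookkeeping of parenthesizations (e.g.\ tracking $K(Ia)$ versus $(KI)a$) and the need to verify by hand the cancellation of the $Ia$- and $Ka$-terms in $Q_I$, which the paper's restriction to $\C_I$ makes automatic. Both arguments are valid; the paper's is shorter because the slice machinery absorbs the non-associative bookkeeping.
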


Note that, for each fixed $\xi$, the functions $x\mapsto S_{f,\II}(x,\xi)$ and $x\mapsto V_{f,\II}(x,\xi)$ turn out to be slice, and rational with respect to the eight real components of $x=(x_0,\ldots,x_7)\in\oo$.

\begin{remark}
Note that, due to the non-associativity of $\oo$, the 
slice Cauchy integrands $S_{f,\II}$ and $V_{f,\II}$ cannot be written as the pointwise product of a slice function in $x$ (the `potential' octonionic slice Cauchy kernel) and the functions $\xi\mapsto\n_{\II}(\xi)f(\xi)$ and $\xi\mapsto\DD_{\II}f(\xi)$, respectively. In this pointwise-product sense, we can say that an octonionic slice Cauchy kernel, independent from $f$, does not exist. This is a typical issue of the non-associative case, see \cite{GP2011,GPR2017}. \bs
\end{remark}

Given any subset $S$ of $\oo$, we denote $\mr{cl}(S)$ the Euclidean closure of $S$ in $\oo=\R^8$. 

Our slice Borel-Pompeiu and Cauchy integral formulas read as follows.

\begin{thm} \label{thm:Cauchy-int-f}
Let $f:\Omega_D\to\oo$ be a slice function of class $\mscr{C}^1$, and let $\II\in\mc{N}$. Suppose that $D$ is bounded with boundary of class $\mscr{C}^1$, and $f$ admits a continuous extension on $\mr{cl}(\Omega_D)$. Then the following integral formula for $f$ holds:
\begin{equation}\label{eq:BP}
f(x)=\int_{\partial\Omega_{\II}}S_{f,\II}(x,\xi)\,\mr{ds}_{\II}(\xi)-\int_{\Omega_{\II}}V_{f,\II}(x,\xi)\,\mr{dv}_{\II}(\xi)
\quad \text{ for each $x\in\Omega_D$.}
\end{equation}

In particular, if $f$ is slice Fueter-regular then
\begin{equation}\label{eq:C}
f(x)=\int_{\partial\Omega_{\II}}S_{f,\II}(x,\xi)\,\mr{ds}_{\II}(\xi)
\quad \text{ for each $x\in\Omega_D$}
\end{equation}
and
\begin{equation}\label{eq:C0}
0=\int_{\partial\Omega_{\II}}S_{f,\II}(x,\xi)\,\mr{ds}_{\II}(\xi)
\quad \text{ for each $x\in\oo\setminus\mr{cl}(\Omega_D)$.}
\end{equation}
\end{thm}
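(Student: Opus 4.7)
The plan is to reduce both formulas to the classical Cauchy--Fueter Borel--Pompeiu identity on the $4$-dimensional quaternionic slice $\Omega_{\II}=\Omega_D\cap\hh_{\II}$, verify it at base points on the complex slice $\C_I$, and then lift the conclusion to all of $\Omega_D$ (respectively to $\oo\setminus\mr{cl}(\Omega_D)$) by a slice-extension argument. The first technical ingredient is the collapse of the integrands on $\C_I$: when $x\in\C_I\setminus\sph_{I,\xi}$, the slice product $(\xi_I-x)\cdot(\overline{\xi_I}-\overline{x})$ reduces to the commutative pointwise product in $\C_I$ and equals $|\xi_I-x|^2$. Combined with the orthogonal decomposition $\xi-x=(\xi_I-x)+\xi_I^\perp$ this yields $g_{I,\xi}(x)=|\xi_I-x|^2+|\xi_I^\perp|^2=|\xi-x|^2\in\R$, so $(g_{I,\xi}^{-\sss\bullet})^{\sss\bullet 2}(x)=|\xi-x|^{-4}$, and \eqref{eq:SfIxi}, \eqref{eq:VfIxi} collapse on $\C_I$ to
\[
S_{f,\II}(x,\xi)=\frac{\overline{\xi-x}}{2\pi^2|\xi-x|^4}\bigl(\n_{\II}(\xi)f(\xi)\bigr),\qquad V_{f,\II}(x,\xi)=\frac{\overline{\xi-x}}{2\pi^2|\xi-x|^4}\bigl(\DD_{\II}f(\xi)\bigr),
\]
namely the standard Cauchy--Fueter kernel on $\hh_{\II}$ applied to the $\oo$-valued data $\n_{\II}f$ and $\DD_{\II}f$ with the indicated bracketing.

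The second step is to invoke the classical Cauchy--Fueter Borel--Pompeiu formula for the $\oo$-valued $\mscr{C}^1$-function $f|_{\Omega_{\II}}$ on the bounded $\mscr{C}^1$-domain $\Omega_{\II}\subset\hh_{\II}$ and to evaluate it at $x\in\C_I\cap\Omega_D\subset\Omega_{\II}$; combined with the previous collapse of the integrands this proves \eqref{eq:BP} for every such $x$. To propagate the identity to the whole of $\Omega_D$, I observe that for each fixed $\xi$ the maps $x\mapsto S_{f,\II}(x,\xi)$ and $x\mapsto V_{f,\II}(x,\xi)$ are slice functions of $x$, and a parametric integral of slice functions against a real measure is again a slice function, whose stem function is the parametric integral of the pointwise stem functions. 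Consequently both sides of \eqref{eq:BP} are slice functions on $\Omega_D$ agreeing on $\C_I\cap\Omega_D$; since under assumption \eqref{assumption} a slice function on $\Omega_D$ is uniquely determined by its restriction to $\C_I\cap\Omega_D$, equality holds throughout $\Omega_D$. Formula \eqref{eq:C} follows by setting $\DD_{\II}f\equiv 0$ via \eqref{eq:sfrfDforall}, and \eqref{eq:C0} by the same route, starting from the classical exterior vanishing at $x\in\C_I\setminus\mr{cl}(\Omega_D\cap\C_I)$ and extending by sliceness to the circular open complement $\oo\setminus\mr{cl}(\Omega_D)$.

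The main obstacle is the invocation of the classical Borel--Pompeiu formula for $\oo$-valued (rather than $\hh_{\II}$-valued) data with the precise bracketing $(\overline{\xi-x})(\n_{\II}(\xi)f(\xi))$ singled out by \eqref{eq:SfIxi}: because $\oo$ is only alternative, the associator $[\overline{\xi-x},\n_{\II}(\xi),f(\xi)]$ need not vanish, so the Stokes-theorem integration by parts behind the formula must be arranged so as to preserve this bracketing at every stage. The explicit pointwise representation in Lemma~\ref{lem:MN}, together with the Moufang identities in $\oo$, is what allows this non-associative bookkeeping to be executed cleanly.
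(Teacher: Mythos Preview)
Your proposal is correct and follows essentially the same route as the paper: collapse the slice integrands to the Cauchy--Fueter kernel on $\C_I$ (this is the content of Lemma~\ref{lem:char-Cauchy}), apply the $\hh_{\II}$-Borel--Pompeiu identity at those points, observe that both sides of \eqref{eq:BP} are slice functions of $x$ (Corollary~\ref{cor:int-sliceness}), and conclude via the identity principle (Corollary~\ref{cor:ip}); the Fueter-regular and exterior cases are handled exactly as you say.

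The one point where you diverge from the paper is your treatment of the ``main obstacle''. The paper does not re-derive the $\oo$-valued Borel--Pompeiu formula on $\Omega_{\II}$ with the bracketing $(\overline{\xi-x})(\n_{\II}(\xi)f(\xi))$; it simply quotes it as Theorem~5.1 of \cite{JRS2019} (and Corollary~5.2 there for the exterior vanishing). Your suggestion that Lemma~\ref{lem:MN} is what resolves the non-associative bookkeeping is off target: Lemma~\ref{lem:MN} only rewrites the \emph{slice} integrands $S_{f,\II}$, $V_{f,\II}$ as explicit rational functions of $x$ and plays no role in establishing the $4$-dimensional identity on $\Omega_{\II}$. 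The actual mechanism (carried out in \cite{JRS2019}) is that both the kernel $E(\xi-x)$ and $\n_{\II}(\xi)$ lie in the associative subalgebra $\hh_{\II}$, so the Stokes-type computation can be performed componentwise after the splitting $f=f_1+f_2L$ with $f_1,f_2$ $\hh_{\II}$-valued and $L\perp\hh_{\II}$; Artin's theorem then fixes the bracketing. Apart from this misattribution, your argument matches the paper's.
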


\begin{quest}\label{quest:3}
Let $f$ and $D$ be as in the statement of the preceding theorem, and let $I\in\sph$. Define: $D_I:=\Omega_D\cap\C_I$, $\partial D_I$ as the boundary of $D_I$ in $\C_I$, $\partial^* D_I:=(\oo^2\setminus\Gamma_I)\cap(\oo\times\partial D_I)$ and $D_I^*:=(\oo^2\setminus\Gamma_I)\cap(\oo\times D_I)$. Do there exist two real analytic functions $S_{f,I}:\partial^* D_I\to\oo$ and $V_{f,I}:D_I^*\to\oo$ such that: each $\xi$-restriction $x\mapsto S_{f,I}(x,\xi)$ is a slice Fueter-regular function, each $\xi$-restriction $x\mapsto V_{f,I}(x,\xi)$ is a slice function, $V_{f,I}=0$ if $f$ is slice Fueter-regular, and the following `curve-plane' Borel-Pompeiu formula holds:
\[
f(x)=\int_{\partial D_I}S_{f,I}(x,\xi)\,\mr{d\xi}-\int_{D_I}V_{f,I}(x,\xi)\,({\textstyle\frac{1}{2}}I\mr{d\xi\wedge d\overline{\xi}})
\quad \text{ for each $x\in\Omega_D$?}
\]
\end{quest}


\subsection{Slice Taylor expansion}\label{subsec:taylor-expansions} 
Consider $\C_i=\{x_0+ix_1\in\hh\,:\,x_0,x_1\in\R\}\subset\hh$ and write $\xx=x_0+ix_1$. Denote $\zz_1,\zz_2,\zz_3:\C_i\to\hh$ the quaternionic Fueter variables restricted to $\C_i$, i.e.
\begin{align*}
&\zz_1(\xx):=-\xx i=x_1-ix_0,\\
&\zz_2(\xx):=-\mr{Re}(\xx)j=-x_0j,\\
&\zz_3(\xx):=-\mr{Re}(\xx)k=-x_0k.
\end{align*}

Note that
\begin{equation}\label{eq:commut}
\zz_2\zz_1=\overline{\zz_1}\zz_2, \qquad \zz_3\zz_1=\overline{\zz_1}\zz_3, \qquad \zz_3\zz_2=-\zz_2\zz_3.
\end{equation}

Let $\kappa=(\kappa_1,\kappa_2,\kappa_3)\in\N^3$, let $|\kappa|:=\kappa_1+\kappa_2+\kappa_3$ and let $\kappa!:=\kappa_1!\kappa_2!\kappa_3!$. Suppose that $\kappa\neq(0,0,0)$ and hence $|\kappa|\geq1$. Consider the sequence of indices $k_1, \ldots,k_{|\kappa|}$ such that the first $\kappa_1$ indices are equal to $1$, the next $\kappa_2$ indices are equal to $2$ and the last $\kappa_3$ indices are equal to~$3$. Let $\Sigma(\kappa)$ be the group of permutations of $\{1,2,\ldots,|\kappa|\}$. For each $\sigma\in\Sigma(\kappa)$, we denote $\zz_{\kappa,\sigma}:\C_i\to\hh$ the polynomial function given by setting
\[
\zz_{\kappa,\sigma}:=\zz_{k_{\sigma(1)}}\zz_{k_{\sigma(2)}}\cdots\zz_{k_{\sigma(|\kappa|)}}.
\]
Let $\mr{Q}_8$ be the quaternion group $\{1,i,j,k,-1,-i,-j,-k\}$. Thanks to \eqref{eq:commut}, there exist, and are unique, $a(\kappa,\sigma),b(\kappa,\sigma)\in\N$ and $L_{\kappa,\sigma}\in\mr{Q}_8$ such that $\kappa_1=a(\kappa,\sigma)+b(\kappa,\sigma)$ and
\[
\zz_{\kappa,\sigma}(\xx)=\mr{Re}(\xx)^{\kappa_2+\kappa_3}\xx^{a(\kappa,\sigma)}\overline{\xx}^{b(\kappa,\sigma)}L_{\kappa,\sigma}=x_0^{\kappa_2+\kappa_3}(x_0+ix_1)^{a(\kappa,\sigma)}(x_0-ix_1)^{b(\kappa,\sigma)}L_{\kappa,\sigma}.
\]

We define the polynomial function $\Pp_\kappa:\C_i\to\hh$ by $\Pp_\kappa:=\frac{1}{\kappa!}\sum_{\sigma\in\Sigma(\kappa)}\zz_{\kappa,\sigma}$ or, equivalently,
\begin{equation}\label{def:Pp}
\Pp_\kappa(x_0+ix_1):=x_0^{\kappa_2+\kappa_3}\sum_{\sigma\in\Sigma(\kappa)}(x_0+ix_1)^{a(\kappa,\sigma)}(x_0-ix_1)^{b(\kappa,\sigma)}\frac{L_{\kappa,\sigma}}{\kappa!}.
\end{equation}

For convention, we set $\Pp_{(0,0,0)}:=1$. Note that, if $P_\kappa:\hh\to\hh$ is the standard $\kappa^{\mr{th}}$ Fueter polynomial in $\hh$ (see \cite[Definition 6.1]{GHS2008}), then $\Pp_\kappa$ is the restriction of $P_\kappa$ on $\C_i$, i.e. we have:
\begin{equation}\label{eq:fueter-polynomials-restr}
\Pp_\kappa=P_\kappa|_{\C_i}.
\end{equation}

The next result follows immediately from definition \eqref{def:Pp}.

\begin{lem}\label{lem:18}
Let $\kappa\in\N^3$ and let $\kk:=|\kappa|$. Then there exists, and is unique, a sequence $\{\ell_{\kappa,h}\}_{h=0}^\kk$ of $\kk+1$ octonions such that $\Pp_\kappa(x_0+ix_1)=\sum_{h=0}^\kk x_0^{\kk-h}(ix_1)^h\ell_{\kappa,h}$ or, equivalently,
\begin{equation*}
\Pp_\kappa(\x)=\sum_{h=0}^\kk \mr{Re}(\x)^{\kk-h}\mr{Im}(\x)^h\ell_{\kappa,h}.
\end{equation*}
\end{lem}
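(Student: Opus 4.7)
The plan is a direct binomial expansion for existence followed by a linear-independence argument for uniqueness. Throughout, every product and power that appears involves only the elements $x_0, x_1, i$ and an octonionic coefficient on the right, so all manipulations take place either inside the associative and commutative subalgebra $\C_i\subset\oo$ (for the scalar factors) or use the fact that $\R$ lies in the centre of $\oo$ (when moving real scalars across octonions). In particular there are no non-associativity issues to worry about.

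For existence, I start from the defining formula \eqref{def:Pp} and expand each factor by the binomial theorem in $\C_i$:
\begin{equation*}
(x_0+ix_1)^{a}(x_0-ix_1)^{b}=\sum_{p=0}^{a}\sum_{q=0}^{b}(-1)^q\binom{a}{p}\binom{b}{q}\,x_0^{a+b-p-q}(ix_1)^{p+q}.
\end{equation*}
Using $a(\kappa,\sigma)+b(\kappa,\sigma)=\kappa_1$, multiplying by $x_0^{\kappa_2+\kappa_3}$, and grouping by $h:=p+q$, each $\sigma$-summand becomes $\sum_{h=0}^{\kappa_1}x_0^{\kk-h}(ix_1)^h\,c_{\kappa,\sigma,h}L_{\kappa,\sigma}$ with integer coefficients $c_{\kappa,\sigma,h}$. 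Summing over $\sigma\in\Sigma(\kappa)$, dividing by $\kappa!$, and setting
\begin{equation*}
\ell_{\kappa,h}:=\frac{1}{\kappa!}\sum_{\sigma\in\Sigma(\kappa)}c_{\kappa,\sigma,h}\,L_{\kappa,\sigma}\in\hh\subset\oo\quad(0\leq h\leq\kappa_1),\qquad \ell_{\kappa,h}:=0\quad(\kappa_1<h\leq\kk),
\end{equation*}
yields the required expression $\Pp_\kappa(x_0+ix_1)=\sum_{h=0}^\kk x_0^{\kk-h}(ix_1)^h\ell_{\kappa,h}$.

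For uniqueness, suppose $\sum_{h=0}^\kk x_0^{\kk-h}(ix_1)^h\ell_h=0$ identically on $\R^2$. Since $i$ and $x_1\in\R$ commute and associate, $(ix_1)^h=i^hx_1^h$, and since $x_1^h\in\R$ lies in the centre of $\oo$,
\begin{equation*}
\sum_{h=0}^\kk x_0^{\kk-h}x_1^h\,(i^h\ell_h)=0\qquad\text{for all }(x_0,x_1)\in\R^2.
\end{equation*}
The real-valued monomials $\{x_0^{\kk-h}x_1^h\}_{h=0}^\kk$ are linearly independent as functions $\R^2\to\R$, hence the $\oo$-valued coefficients $i^h\ell_h$ must all vanish; as each $i^h$ is a unit in $\oo$, this forces $\ell_h=0$ for every $h$. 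The equivalent form written in terms of $\mr{Re}(\x)$ and $\mr{Im}(\x)$ follows immediately from $\mr{Re}(x_0+ix_1)=x_0$ and $\mr{Im}(x_0+ix_1)=ix_1$ for points of $\C_i$. The only mild care required in the whole argument lies in this bookkeeping: keeping each step inside $\C_i$ or on the centre of $\oo$ so that the rearrangements are legitimate; beyond that, there is no substantive obstacle.
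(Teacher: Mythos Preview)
Your proof is correct. The paper does not supply a proof of this lemma at all, stating only that it ``follows immediately from definition \eqref{def:Pp}''; you have simply spelled out that immediate argument---binomial expansion for existence and linear independence of the monomials $x_0^{\kk-h}x_1^h$ for uniqueness---so your approach is exactly what the paper intends.
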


Note that, if $\lfloor r\rfloor$ denotes the integer part of $r\in\R$, then we can also write:
\begin{equation}\label{def:ellekh}
\Pp_\kappa(\x)=\sum_{h=0}^{\lfloor \frac{\kk}{2}\rfloor}\mr{Re}(\x)^{\kk-2h}\mr{Im}(\x)^{2h}\ell_{\kappa,2h}+\sum_{h=0}^{\lfloor \frac{\kk-1}{2}\rfloor}\mr{Re}(\x)^{\kk-2h-1}\mr{Im}(\x)^{2h+1}\ell_{\kappa,2h+1}.
\end{equation}

For instance, the $\Pp_\kappa$'s with $|\kappa|\leq3$ are $\Pp_{(0,0,0)}=1$ and the following:
\begin{itemize}
 \item $\Pp_{(0,0,1)}=\zz_3=-\mr{Re}(\xx)k$, $\Pp_{(0,1,0)}=\zz_2=-\mr{Re}(\xx)j$, $\Pp_{(1,0,0)}=\zz_1=-\xx i$;
 \item $\Pp_{(0,0,2)}=\zz_3^2=-\mr{Re}(\xx)^2$, $\Pp_{(0,1,1)}=0$, $\Pp_{(0,2,0)}=\zz_2^2=-\mr{Re}(\xx)^2=\Pp_{(0,0,2)}$, $\Pp_{(1,0,1)}=\mr{Re}(\zz_1)\zz_3=-\mr{Re}(\xx)\mr{Im}(\xx)j$, $\Pp_{(1,1,0)}=\mr{Re}(\zz_1)\zz_2=\mr{Re}(\xx)\mr{Im}(\xx)k$, $\Pp_{(2,0,0)}=\zz_1^2=-\xx^2$;
 \item $\Pp_{(0,0,3)}=\zz_3^3=\mr{Re}(\xx)^3k$, $\Pp_{(0,1,2)}=\frac{1}{3}\zz_2\zz_3^2=\frac{1}{3}\mr{Re}(\xx)^3j$, $\Pp_{(0,2,1)}=\frac{1}{3}\zz_2^2\zz_3=\frac{1}{3}\mr{Re}(\xx)^3k$, $\Pp_{(0,3,0)}=\zz_2^3=\mr{Re}(\xx)^3j$, $\Pp_{(1,0,2)}=\frac{1}{3}(2\zz_1+\overline{\zz_1})\zz_3^2=\frac{1}{3}\mr{Re}(\xx)^2(\mr{Re}(x)+3\mr{Im}(x))i$, $\Pp_{(1,1,1)}=0$, $\Pp_{(1,2,0)}=\frac{1}{3}(2\zz_1+\overline{\zz_1})\zz_2^2=\frac{1}{3}\mr{Re}(\xx)^2(\mr{Re}(x)+3\mr{Im}(x))i=\Pp_{(1,0,2)}$, $\Pp_{(2,0,1)}=\frac{1}{3}(\zz_1^2+\overline{\zz_1}^2+|\zz_1|^2)\zz_3=\frac{1}{3}\mr{Re}(\xx)(\mr{Re}(\xx)^2+3\mr{Im}(\xx)^2)k$, $\Pp_{(2,1,0)}=\frac{1}{3}(\zz_1^2+\overline{\zz_1}^2+|\zz_1|^2)\zz_2=\frac{1}{3}\mr{Re}(\xx)(\mr{Re}(\xx)^2+3\mr{Im}(\xx)^2)j$, $\Pp_{(3,0,0)}=\zz_1^3=\xx^3 i$. 
\end{itemize}
Note that the $\Pp_\kappa$'s are not $\R$-linearly independent; for instance, $\Pp_{(0,1,1)}=\Pp_{(1,1,1)}=0$, $\Pp_{(0,2,0)}=\Pp_{(0,0,2)}$ and $\Pp_{(1,2,0)}=\Pp_{(1,0,2)}$. It would be important to solve the following problem:

\begin{quest}\label{quest:real-vector-basis}
Let $\kk$ be any natural number, $\N^3_\kk:=\{\kappa\in\N^3\,:\,|\kappa|=\kk\}$ and let $\mathsf{P}_\kk$ be the real vector subspace of $\mscr{C}^0(\C_i,\oo)$ generated by the set $\{\Pp_\kappa\}_{\kappa\in\N^3_\kk}$. Find a subset $B_\kk$ of $\N^3_\kk$ and, for each $\lambda\in\N^3_\kk\setminus B_\kk$, real coefficients $\{r_{\lambda,\kappa}\}_{\kappa\in B_\kk}$ such that: $\{\Pp_\kappa\}_{\kappa\in B_\kk}$ is a real vector basis of $\mathsf{P}_\kk$ and, for each $\lambda\in\N^3_\kk\setminus B_\kk$, $\Pp_\lambda=\sum_{\kappa\in B_\kk}r_{\lambda,\kappa}\Pp_\kappa$.
\end{quest}

Thanks to the preceding list, the coefficients $\ell_{\kappa,h}$ for $|\kappa|\leq2$ are as follows:
\begin{itemize}
 \item $\ell_{(0,0,0),0}=1$;
 \item $\ell_{(0,0,1),0}=-k$, $\ell_{(0,0,1),1}=0$; $\ell_{(0,1,0),0}=-j$, $\ell_{(0,1,0),1}=0$; $\ell_{(1,0,0),0}=-i$, $\ell_{(1,0,0),1}=-i$;
 \item $\ell_{\kappa,0}=-1$, $\ell_{\kappa,1}=\ell_{\kappa,2}=0$ if $\kappa\in\{(0,0,2),(0,2,0)\}$; $\ell_{(0,1,1),0}=\ell_{(0,1,1),1}=\ell_{(0,1,1),2}=0$; $\ell_{(1,0,1),0}=0$, $\ell_{(1,0,1),1}=-j$, $\ell_{(1,0,1),2}=0$; $\ell_{(1,1,0),0}=0$, $\ell_{(1,1,0),1}=k$, $\ell_{(1,1,0),2}=0$; $\ell_{(2,0,0),0}=-1$, $\ell_{(2,0,0),1}=-2$, $\ell_{(2,0,0),2}=-1$.
\end{itemize}

{\it Fix $\II=(I,J)\in\mc{N}$.} We indicate $\psi_\II:\hh\to\oo$ the real algebra embedding such that $\psi_{\II}(1)=1$, $\psi_{\II}(i)=I$, $\psi_{\II}(j)=J$ and hence $\psi_{\II}(k)=IJ$. Given $\kappa\in\N^3$ and $h\in\{0,\ldots,|\kappa|\}$, we define $\ell_{\II,\kappa,h}\in\hh_{\II}$ by setting
\begin{equation}\label{eq:lIIk}
\ell_{\II,\kappa,h}:=\psi_{\II}(\ell_{\kappa,h}).
\end{equation}
For each $a,b\in\oo$, we denote $\langle I;a,b\rangle$ the following octonion:
\begin{equation}\label{eq:I-product}
\langle I;a,b\rangle:=-I((Ia)b).
\end{equation}
Furthermore, if $\kk:=|\kappa|$, we define the function $\Pp_{\II,\kappa}:\oo^2\to\oo$ as follows:
\begin{align}\label{eq:sfpf}
\Pp_{\II,\kappa}(x,c):=\sum_{h=0}^{\lfloor \frac{\kk}{2}\rfloor}\mr{Re}(x)^{\kk-2h}\mr{Im}(x)^{2h}(\ell_{\II,\kappa,2h}c)+\sum_{h=0}^{\lfloor \frac{\kk-1}{2}\rfloor}\mr{Re}(x)^{\kk-2h-1}\mr{Im}(x)^{2h+1}\langle I;\ell_{\II,\kappa,2h+1},c\rangle.
\end{align}

Note that, for each $c\in\oo$, the $c$-restriction $x\mapsto\Pp_{\II,\kappa}(x,c)$ is a variant of the slice product between $x\mapsto\Pp_\kappa(x)$ and the constant $c$.  

Let $f:\Omega_D\to\oo$ be a slice function of class $\mscr{C}^{|\kappa|}$. Recall that $f_{\II}:E\to\oo$ is the function defined by $f_{\II}(v):=f(x_0+x_1I+x_2J+x_3IJ)$ if $v=(x_0,x_1,x_2,x_3)\in E$. Since $f_{\II}$ is of class~$\mscr{C}^{|\kappa|}$, we can define $\partialIIk f:\Omega_\II\to\oo$ by setting
\[
\partialIIk f(x):=\frac{\partial^{|\kappa|}f_{\II}}{\partial x_1^{\kappa_1}\partial x_2^{\kappa_2}\partial x_3^{\kappa_3}}(\psi_{\II}^{-1}(x)).
\]
 
Let us introduce the slice Fueter counterpart of Taylor powers at the origin, associated to~$f$.

\begin{defn}\label{def:sfpf}
Let $\kk\in\N$. For each sequence $\mc{C}=\{c_\kappa\}_{\kappa\in\N^3}$ in $\oo$, we define the \emph{slice Fueter $(\II,\kk)$-power associated to $\mc{C}$} as the slice function $\Pp_{\II,\kk;\mc{C}}:\oo\to\oo$ given by
\[
\Pp_{\II,\kk;\mc{C}}(x):=\sum_{\kappa\in\N^3,|\kappa|=\kk}\Pp_{\II,\kappa}(x,c_\kappa).
\]
In the case $c_\kappa=\frac{\partialIIk f(0)}{\kappa!}$ for each $\kappa\in\N^3$, we call $\Pp_{\II,\kk;\mc{C}}$ the \emph{slice Fueter $(\II,\kk)$-power associated to~$f$ and centered at $0$}, and we write $\Pp_{\II,\kk;f,0}$ in place of $\Pp_{\II,\kk;\mc{C}}$. In other words, we set:
\begin{align*}
\Pp_{\II,\kk;f,0}(x):=&\,\sum_{\kappa\in\N^3,|\kappa|=\kk}\frac{1}{\kappa!}\Pp_{\II,\kappa}(x,\partialIIk f(0))=\sum_{\kappa\in\N^3,|\kappa|=\kk}\Pp_{\II,\kappa}\big(x,{\textstyle\frac{\partialIIk f(0)}{\kappa!}}\big)=\\
=&\,\sum_{\kappa\in\N^3,|\kappa|=\kk}
\left(
\sum_{h=0}^{\lfloor \frac{\kk}{2}\rfloor}\mr{Re}(x)^{\kk-2h}\mr{Im}(x)^{2h}\big(\ell_{\II,\kappa,2h}\big({\textstyle\frac{\partialIIk f(0)}{\kappa!}}\big)\big)+\right.\\
&\,+\left.\sum_{h=0}^{\lfloor \frac{\kk-1}{2}\rfloor}\mr{Re}(x)^{\kk-2h-1}\mr{Im}(x)^{2h+1}\big\langle I;\ell_{\II,\kappa,2h+1},{\textstyle\frac{\partialIIk f(0)}{\kappa!}}\big\rangle
\right).\, \text{ \bs}
\end{align*}
\end{defn}


{\it Fix $r>0$}. We denote $B(r)$ the open ball of $\oo$ centered at $0$ with radius $r$.

\begin{thm}\label{thm:taylor}
Each slice Fueter-regular function $f:B(r)\to\oo$ can be expanded as follows:
\[
f(x)=\sum_{\kk\in\N}\Pp_{\II,\kk;f,0}(x)\quad \text{ for each $x\in B(r)$,}
\]
where the series converges uniformly on each compact subset of $B(r)$.
\end{thm}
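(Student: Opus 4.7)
The plan is to derive the expansion first on the complex slice $B(r)\cap\C_I$ from the classical Fueter--Taylor theorem applied to $f_\II=f|_{\hh_\II}$, and then extend it to all of $B(r)$ via the uniqueness of the stem function of a slice function. By Theorem \ref{thm:real-analyticity}, $f$ is real analytic on $B(r)$; by \eqref{eq:sfrfDforall}, $f_{\II}:B(r)\cap\hh_{\II}\to\oo$ satisfies $\DD_{\II}f_\II=0$, so it is Fueter regular in the classical sense on the $4$-dimensional ball $B(r)\cap\hh_{\II}$. The Fueter--Taylor expansion (Theorem 5.4 of \cite{JRS2019}) then yields
\[
f_{\II}(y)=\sum_{\kappa\in\N^3}P_{\II,\kappa}(y)\,c_\kappa,\qquad c_\kappa:=\tfrac{\partialIIk f(0)}{\kappa!},
\]
converging uniformly on compact subsets of $B(r)\cap\hh_\II$, where $P_{\II,\kappa}:=\psi_\II\circ P_\kappa\circ\psi_\II^{-1}:\hh_\II\to\hh_\II$ is the transplanted Fueter polynomial.

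The central algebraic step is the identity
\[
\sum_{|\kappa|=\kk}P_{\II,\kappa}(x)\,c_\kappa=\Pp_{\II,\kk;f,0}(x)\qquad\text{for every }x\in B(r)\cap\C_I.
\]
By \eqref{eq:fueter-polynomials-restr} and Lemma \ref{lem:18} applied via $\psi_\II$, I read off $P_{\II,\kappa}|_{\C_I}(x)=\sum_{h=0}^{\kk}\mr{Re}(x)^{\kk-h}\mr{Im}(x)^h\,\ell_{\II,\kappa,h}\in\hh_\II$. Writing $x=x_0+x_1I$, the even powers $\mr{Im}(x)^{2m}=(-1)^mx_1^{2m}$ are real scalars, so the $h=2m$ terms match their counterparts in \eqref{eq:sfpf} unambiguously. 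For the odd indices $h=2m+1$, $\mr{Im}(x)^{2m+1}=(-1)^mx_1^{2m+1}I$, and the alternativity identity $I(Iu)=-u$ combined with the definition \eqref{eq:I-product} of the bracket gives
\[
\mr{Im}(x)^{2m+1}\,\langle I;\ell_{\II,\kappa,2m+1},c_\kappa\rangle=(-1)^mx_1^{2m+1}(I\ell_{\II,\kappa,2m+1})c_\kappa;
\]
since $I$ and $\ell_{\II,\kappa,2m+1}$ both lie in the associative subalgebra $\hh_\II$, the right-hand side equals $(\mr{Im}(x)^{2m+1}\ell_{\II,\kappa,2m+1})c_\kappa$, which is precisely the $h=2m+1$ contribution to $P_{\II,\kappa}|_{\C_I}(x)\,c_\kappa$. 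Summing over $h$ and over $|\kappa|=\kk$ yields the claim, so on $B(r)\cap\C_I$ the original Fueter--Taylor series can be rewritten as $\sum_\kk\Pp_{\II,\kk;f,0}$, with uniform convergence on compact subsets of $B(r)\cap\C_I$.

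To extend from $\C_I$ to $B(r)$, observe that each $\Pp_{\II,\kk;f,0}$ is a slice function on $\oo$ by direct inspection of its defining formula \eqref{eq:sfpf}. Denote by $G_\kk$ its stem function; by the reconstruction formulas recalled in Subsection \ref{subsec:GS} ($G_\kk^{(1)}(\alpha,\beta)=\tfrac{1}{2}(\Pp_{\II,\kk;f,0}(\alpha+\beta I)+\Pp_{\II,\kk;f,0}(\alpha-\beta I))$ and an analogous expression for $G_\kk^{(2)}$), the uniform convergence of $\sum_\kk\Pp_{\II,\kk;f,0}|_{\C_I}$ to $f|_{\C_I}$ on compact subsets of $B(r)\cap\C_I$ implies uniform convergence of $\sum_\kk G_\kk$ to the stem function $F$ of $f$ on compact subsets of the disc $D\cap B(r)$. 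Evaluating through $f(\alpha+\beta J)=F_1(\alpha,\beta)+JF_2(\alpha,\beta)$ and using the projection $x\mapsto(\mr{Re}(x),|\mr{Im}(x)|)$, which maps any compact $K\subset B(r)$ to a compact subset of $D\cap B(r)$, the triangle inequality (with $|J|=1$) then propagates this convergence to uniform convergence of $\sum_\kk\Pp_{\II,\kk;f,0}$ to $f$ on compact subsets of $B(r)$, concluding the proof. I expect the main obstacle to be the algebraic verification of the second paragraph: the bracket $\langle I;\cdot,\cdot\rangle$ is engineered precisely so as to absorb the non-associativity (the ``camshaft effect'') and to reproduce the ordinary octonionic multiplication on $\C_I$; all other steps are standard sliceness or stem-function bookkeeping.
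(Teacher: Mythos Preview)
Your proof is correct and follows essentially the same route as the paper: restrict to $\hh_\II$, invoke the Fueter--Taylor expansion from \cite{JRS2019} (the paper cites Theorem~6.1 there rather than~5.4), identify the terms on $\C_I$ with $\Pp_{\II,\kk;f,0}$ (your second paragraph is exactly the computation behind the paper's Lemma~\ref{lem:ppqq}), and then extend by sliceness. The only presentational difference is that for the extension step the paper cites \cite[Theorem~3.4]{GPS2017-singular} together with Corollary~\ref{cor:ip}, whereas you carry out the stem-function propagation by hand; both arguments rely on the absolute/total convergence of the Fueter series, which you might state explicitly to justify the regrouping $\sum_{\kappa}\to\sum_{\kk}\sum_{|\kappa|=\kk}$.
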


\begin{cor}\label{cor:taylor}
Let $f:B(r)\to\oo$ be a slice Fueter-regular function. Then, for each $\kk\in\N$, the slice Fueter $(\II,\kk)$-power $\Pp_{\II,\kk;f,0}:\oo\to\oo$ is slice Fueter-regular.
\end{cor}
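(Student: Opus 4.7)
The plan is to extract the $\kk$th-degree homogeneous component on both sides of the Taylor expansion furnished by Theorem \ref{thm:taylor}, and to exploit the first-order constant-coefficient nature of the Cauchy-Riemann-Fueter operator $\DD_\II$ together with the equivalence \eqref{eq:sfrfDexists}.

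First, I observe that by Definition \ref{def:sfpf} and Lemma \ref{lem:18} the function $\Pp_{\II,\kk;f,0}:\oo\to\oo$ is a slice function on $\oo$ (being a finite sum of slice functions of the form $\Pp_{\II,\kappa}(\cdot,c_\kappa)$), and that its restriction to the quaternionic slice $\hh_\II$ is a homogeneous polynomial of total degree $\kk$ in the real coordinates $(x_0,x_1,x_2,x_3)$ of $\hh_\II$, with coefficients in $\oo$. Indeed, on $\hh_\II$ one has $\mr{Re}(x)=x_0$, while $\mr{Im}(x)^{2h}=(-1)^h(x_1^2+x_2^2+x_3^2)^h\in\R$ and $\mr{Im}(x)^{2h+1}=(-1)^h(x_1^2+x_2^2+x_3^2)^h(x_1I+x_2J+x_3IJ)$, so each monomial appearing in the explicit formula for $\Pp_{\II,\kk;f,0}$ restricts on $\hh_\II$ to a polynomial of total degree $\kk$ in $(x_0,x_1,x_2,x_3)$. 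In particular $\Pp_{\II,\kk;f,0}$ is of class $\mscr{C}^\infty$.

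Next, Theorem \ref{thm:taylor} gives $f(x)=\sum_{\kk\in\N}\Pp_{\II,\kk;f,0}(x)$ with uniform convergence on compact subsets of $B(r)$. Restricted to $B(r)\cap\hh_\II$, this is a series of power-series type in $(x_0,x_1,x_2,x_3)$ with octonionic coefficients, whose $\kk$th term is homogeneous of degree $\kk$. Standard Cauchy-type estimates for such power series then guarantee that the formally termwise-differentiated series converges uniformly on every compact subset of $B(r)\cap\hh_\II$. Applying $\DD_\II$ termwise, and using that $\DD_\II f=0$ on $\Omega_\II=B(r)\cap\hh_\II$ by \eqref{eq:sfrfDforall}, we obtain
\[
0=\DD_\II f\big|_{B(r)\cap\hh_\II}=\sum_{\kk\in\N}\DD_\II\Pp_{\II,\kk;f,0}\big|_{B(r)\cap\hh_\II}.
\]

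Finally, since $\DD_\II$ is first-order with constant coefficients, each summand $\DD_\II\Pp_{\II,\kk;f,0}|_{\hh_\II}$ is a polynomial in $(x_0,x_1,x_2,x_3)$ that is either identically zero or homogeneous of degree $\kk-1$. The uniqueness of the homogeneous decomposition of a polynomial that vanishes on the open subset $B(r)\cap\hh_\II$ of $\hh_\II$ therefore forces $\DD_\II\Pp_{\II,\kk;f,0}=0$ on all of $\hh_\II$, for every $\kk\in\N$. Since $\Pp_{\II,\kk;f,0}$ is slice on $\oo$ by the first step, equivalence \eqref{eq:sfrfDexists} (applied with $E=\R^4$) yields that $\Pp_{\II,\kk;f,0}$ is slice Fueter-regular, which is the claim. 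The main technical point is the termwise differentiation of the series in the second step; this is classical once the series is recognised as a power series in the four real coordinates of $\hh_\II$, and the rest of the argument is purely formal, based on the degree-lowering action of $\DD_\II$ and the uniqueness of polynomial graded components.
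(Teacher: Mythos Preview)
Your proof is correct and follows the same overall strategy as the paper's: exploit the homogeneity of the individual terms $\Pp_{\II,\kk;f,0}$ to isolate the $\kk$th summand from a series identity obtained by applying a first-order operator term by term. The difference lies in which characterization of slice Fueter-regularity you target. The paper works globally on $\oo\setminus\R$ with the slice operators $\overline{\partial}$ and $(\cdot)_s'$, showing that each $\overline{\partial}\Pp_{\II,\kk;f,0}$ and $(\Pp_{\II,\kk;f,0})_s'$ has homogeneous real components of degree $\kk-1$, and then uses the criterion $(2\mr{a})$ of Theorem~\ref{thm:2}; you instead restrict to the quaternionic slice $\hh_\II$, apply $\DD_\II$ directly, and invoke \eqref{eq:sfrfDexists}. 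Your route is a bit more elementary, since it avoids the Vekua-type reformulation and the spherical derivative altogether; the paper's route has the minor advantage of carrying out the homogeneity argument on the full $8$-dimensional domain rather than on a $4$-dimensional slice. Both require the same termwise-differentiation step, justified by recognising the expansion as a power series (in four, respectively eight, real variables).
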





\begin{example}
Let $a,b\in\oo$ and let $f:\oo\to\oo$ be the slice Fueter-regular function we considered in \eqref{ex1} with $c=d=0$, i.e.
\[
f(x)=3\mr{Re}(x)b+\mr{Im}(x)b+3\mr{Re}(x)^2a+2\mr{Re}(x)\mr{Im}(x)a+\mr{Im}(x)^2a.
\]
Choose $\II=(i,j)$ in $\mc{N}$. By homogeneity's reasons, it is evident that $\Pp_{(\II,0;f,0)}=0$, $\Pp_{\II,1;f,0}(x)=3\mr{Re}(x)b+\mr{Im}(x)b$, $\Pp_{\II,2;f,0}(x)=3\mr{Re}(x)^2a+2\mr{Re}(x)\mr{Im}(x)a+\mr{Im}(x)^2a$; moreover, by Corollary \ref{cor:taylor}, these functions are slice Fueter-regular.

We would like to verify the latter assertions by direct computations. First, we note that
\[
f_{\II}(x_0+ix_1+jx_2+kx_3)=3x_0b+(3x_0^2-x_1^2-x_2^2-x_3^2)a+(ix_1+jx_2+kx_3)(2x_0a+b).
\]
Hence\vspace{.3em}
\begin{itemize}
 \item $\frac{\partial_{\,\II,(0,0,0)}f(0)}{(0,0,0)!}=f(0)=0$,\vspace{.3em}
 \item $\frac{\partial_{\,\II,(0,0,1)}f(0)}{(0,0,1)!}=kb$, $\frac{\partial_{\,\II,(0,1,0)}f(0)}{(0,1,0)!}=jb$, $\frac{\partial_{\,\II,(1,0,0)}f(0)}{(1,0,0)!}=ib$,\vspace{.3em}
 \item $\frac{\partial_{\,\II,(0,0,2)}f(0)}{(0,0,2)!}=\frac{\partial_{\,\II,(0,2,0)}f(0)}{(0,2,0)!}=\frac{\partial_{\,\II,(2,0,0)}f(0)}{(2,0,0)!}=-a$, $\frac{\partial_{\,\II,(0,1,1)}f(0)}{(0,1,1)!}=\frac{\partial_{\,\II,(1,0,1)}f(0)}{(1,0,1)!}=\frac{\partial_{\,\II,(1,1,0)}f(0)}{(1,1,0)!}=0$\vspace{.3em}
\end{itemize}
and, by Artin's theorem,\vspace{.3em}
\begin{itemize}
 \item $\Pp_{\II,(0,0,0)}\big(x,\frac{\partial_{\,\II,(0,0,0)}f(0)}{(0,0,0)!}\big)=0$,\vspace{.3em}
 \item $\Pp_{\II,(0,0,1)}\big(x,\frac{\partial_{\,\II,(0,0,1)}f(0)}{(0,0,1)!}\big)=\mr{Re}(x)(-k)(kb)=\mr{Re}(x)b$,\vspace{.3em}
\\ $\Pp_{\II,(0,1,0)}\big(x,\frac{\partial_{\,\II,(0,1,0)}f(0)}{(0,1,0)!}\big)=\mr{Re}(x)(-j)(jb)=\mr{Re}(x)b$,\vspace{.3em}
\\ $\Pp_{\II,(1,0,0)}\big(x,\frac{\partial_{\,\II,(1,0,0)}f(0)}{(1,0,0)!}\big)=\mr{Re}(x)(-i)(ib)+\mr{Im}(x)\langle i;-i,ib\rangle=\mr{Re}(x)b+\mr{Im}(x)(-i((i(-i))(ib)))=\mr{Re}(x)b+\mr{Im}(x)b$,\vspace{.5em}
 \item $\Pp_{\II,(0,0,2)}\big(x,\frac{\partial_{\,\II,(0,0,2)}f(0)}{(0,0,2)!}\big)=\Pp_{\II,(0,2,0)}\big(x,\frac{\partial_{\,\II,(0,2,0)}f(0)}{(0,2,0)!}\big)=\mr{Re}(x)^2(-1)(-a)=\mr{Re}(x)^2a$, \vspace{.3em}
\\ $\Pp_{\II,(2,0,0)}\big(x,\frac{\partial_{\,\II,(2,0,0)}f(0)}{(2,0,0)!}\big)=\mr{Re}(x)^2(-1)(-a)+\mr{Im}(x)^2(-1)(-a)+\mr{Re}(x)\mr{Im}(x)\langle i,-2;-a\rangle=\mr{Re}(x)^2a+\mr{Im}(x)^2a+2\mr{Re}(x)\mr{Im}(x)a$, \vspace{.3em}
\\ $\Pp_{\II,(0,1,1)}\big(x,\frac{\partial_{\,\II,(0,1,1)}f(0)}{(0,1,1)!}\big)=\Pp_{\II,(1,0,1)}\big(x,\frac{\partial_{\,\II,(1,0,1)}f(0)}{(1,0,1)!}\big)=\Pp_{\II,(1,1,0)}\big(x,\frac{\partial_{\,\II,(1,1,0)}f(0)}{(1,1,0)!}\big)=0$.\vspace{.3em}
\end{itemize}

As a consequence, we have: $\Pp_{\II,0;f,0}(x)=0$, $\Pp_{\II,1;f,0}(x)=\sum_{\kappa\in\N^3,|\kappa|=1}\Pp_{\II,\kappa}\big(x,{\textstyle\frac{\partialIIk f(0)}{\kappa!}}\big)=3\mr{Re}(x)b+\mr{Im}(x)b$ and $\Pp_{\II,2;f,0}(x)=\sum_{\kappa\in\N^3,|\kappa|=2}\Pp_{\II,\kappa}\big(x,{\textstyle\frac{\partialIIk f(0)}{\kappa!}}\big)=3\mr{Re}(x)^2a+2\mr{Re}(x)\mr{Im}(x)a+\mr{Im}(x)^2a$, as claimed.

Finally, if $(F_1,F_2)$ and $(G_1,G_2)$ are the stem functions inducing $\Pp_{\II,1;f,0}$ and $\Pp_{\II,2;f,0}$ respectively, then we have: $F_1(x_0,x_1)=3x_0b$, $F_2(x_0,x_1)=x_1b$, $G_1(x_0,x_1)=3x_0^2a-x_1^2a$ and $G_2(x_0,x_1)=2x_0x_1a$. Consequently, $\frac{\partial F_1}{\partial x_0}-\frac{\partial F_2}{\partial x_1}=2b=2\frac{F_2}{x_1}$, $\frac{\partial F_1}{\partial x_1}+\frac{\partial F_2}{\partial x_0}=0$, and $\frac{\partial G_1}{\partial x_0}-\frac{\partial G_2}{\partial x_1}=4x_0a=2\frac{G_2}{x_1}$, $\frac{\partial G_1}{\partial x_1}+\frac{\partial G_2}{\partial x_0}=0$. In other words, $\Pp_{\II,1;f,0}$ and $\Pp_{\II,2;f,0}$ are slice Fueter-regular, as claimed. \bs
\end{example}

\begin{remark}\label{rem:general-real-case-taylor}
Let $y\in\R$, let $B_y(r):=y+B(r)$, let $f:B_y(r)\to\oo$ be a slice Fueter-regular function and let $\mc{C}_y=\{c_\kappa\}_{\kappa\in\N^3}$ be the sequence given by $c_\kappa:=\frac{\partialIIk f(y)}{\kappa!}$. We define the \emph{slice Fueter $(\II,\kk)$-power associated to~$f$ and centered at $y$} as the slice function $\Pp_{\II,\kk;f,y}:\oo\to\oo$ defined by $\Pp_{\II,\kk;f,y}(x):=\Pp_{\II,\kk;\mc{C}_y}(x-y)$. In this situation, Theorem \ref{thm:taylor} and Corollary \ref{cor:taylor} can be restated in the same way by replacing $B(r)$ and $\Pp_{\II,\kk;f,0}$ with $B_y(r)$ and $\Pp_{\II,\kk;f,y}$, respectively. \bs
\end{remark}


\subsection{Slice Laurent expansion}\label{subsec:laurent-expansions}

Let $\Env:\hh\setminus\{0\}\to\hh$ be the rational function defined by 
\[
\Env(x):=\frac{1}{2\pi^2}\frac{\overline{x}}{|x|^4}.
\]
Given any $\kappa=(\kappa_1,\kappa_2,\kappa_3)\in\N^3$, we denote $Q_\kappa:\hh\setminus\{0\}\to\hh$ the $\partial_\kappa$-derivative $\partial_\kappa\Env$
of $\Env$, i.e.
\begin{equation}\label{eq:Qkappa}
Q_\kappa(x):=\partial_\kappa\Env(x)=
\frac{\partial^{|\kappa|}\Env}{\partial x_1^{\kappa_1}\partial x_2^{\kappa_2}\partial x_3^{\kappa_3}}(x).
\end{equation}

Next result is an immediate consequence of Proposition 7.27 of \cite{GHS2008}.

\begin{lem}\label{lem:19}
Let $\x\in\C_i\setminus\{0\}\subset\hh$, let $\kappa\in\N^3$ and let $\kk:=|\kappa|$. Then there exists, and is unique, a sequence $\{m_{\kappa,h}\}_{h=0}^{\kk+1}$ of $\kk+2$ octonions such that
\[
Q_\kappa(\x)=\frac{1}{|\x|^{4+2\kk}}\sum_{h=0}^{\kk+1}\mr{Re}(\x)^{\kk+1-h}\mr{Im}(\x)^hm_{\kappa,h}
\]
or, equivalently,
\begin{equation*}
Q_\kappa(\x)=\frac{1}{|\x|^{4+2\kk}}\sum_{h=0}^{\lfloor \frac{\kk+1}{2}\rfloor}\mr{Re}(\x)^{\kk+1-2h}\mr{Im}(\x)^{2h}m_{\kappa,2h}+\frac{1}{|\x|^{4+2\kk}}\sum_{h=0}^{\lfloor \frac{\kk}{2}\rfloor}\mr{Re}(\x)^{\kk-2h}\mr{Im}(\x)^{2h+1}m_{\kappa,2h+1}.
\end{equation*}
\end{lem}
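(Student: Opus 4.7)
The plan is to derive the formula by taking the closed-form expression for $Q_\kappa=\partial_\kappa\Env$ on $\hh\setminus\{0\}$ provided by Proposition 7.27 of \cite{GHS2008} and restricting it to $\x\in\C_i\setminus\{0\}$. That proposition writes $Q_\kappa(x)$ as a rational function $R_\kappa(x_0,x_1,x_2,x_3)/|x|^{4+2\kk}$, where $R_\kappa$ is a quaternion-valued polynomial, homogeneous of degree $\kk+1$ in the four real components of $x$ (the denominator picks up two degrees and the numerator one degree for each application of a $\partial/\partial x_h$ to $\Env$, starting from degree $1$ over $|x|^4$).

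Setting $x_2=x_3=0$ reduces the denominator to $|\x|^{4+2\kk}=(x_0^2+x_1^2)^{2+\kk}$ and reduces $R_\kappa$ to a quaternion-valued polynomial $\widetilde R_\kappa(x_0,x_1)$, still homogeneous of degree $\kk+1$. Expanding in the $\R$-basis $\{x_0^{\kk+1-h}x_1^h\}_{h=0,\ldots,\kk+1}$ of degree-$(\kk+1)$ homogeneous polynomials in two real variables, I get uniquely determined $n_{\kappa,h}\in\hh$ with $\widetilde R_\kappa(x_0,x_1)=\sum_{h=0}^{\kk+1}x_0^{\kk+1-h}x_1^h n_{\kappa,h}$. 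Since $\mr{Re}(\x)=x_0$ is central and $\mr{Im}(\x)=ix_1$ lies in the commutative subfield $\C_i\subset\hh$, the substitution $x_1^h=(-i)^h\mr{Im}(\x)^h$ propagates freely through the associative product in $\hh$ and gives
\[
x_0^{\kk+1-h}x_1^h n_{\kappa,h}=\mr{Re}(\x)^{\kk+1-h}\mr{Im}(\x)^h m_{\kappa,h}
\]
with $m_{\kappa,h}:=(-i)^h n_{\kappa,h}\in\hh\subset\oo$. Summing over $h$ and dividing by $|\x|^{4+2\kk}$ delivers the claimed expansion.

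Uniqueness of the $m_{\kappa,h}$ is inherited from the $\R$-linear independence of $\{x_0^{\kk+1-h}x_1^h\}_{h=0,\ldots,\kk+1}$ as real-valued functions on $\R^2\setminus\{0\}$, together with the fact that $n\mapsto(-i)^h n$ is an $\R$-linear bijection of $\hh$. The second form of the identity in the statement is a cosmetic reorganization separating even and odd indices, using $\mr{Im}(\x)^{2h}\in\R$ and $\mr{Im}(\x)^{2h+1}\in i\R$, and does not alter the coefficients. The only nontrivial step is the bookkeeping of locating and transcribing the exact closed form of $Q_\kappa$ from \cite{GHS2008}; once that is in hand, the restriction to $\C_i$ and the rescaling by powers of $-i$ are routine, which is why the author labels the statement as an immediate consequence.
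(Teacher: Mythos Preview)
Your proposal is correct and follows exactly the route the paper indicates: the paper gives no argument beyond the sentence ``Next result is an immediate consequence of Proposition 7.27 of \cite{GHS2008}'', and you have simply spelled out that consequence---extracting the homogeneous numerator of degree $\kk+1$ over $|x|^{4+2\kk}$, restricting to $x_2=x_3=0$, expanding in the monomial basis $\{x_0^{\kk+1-h}x_1^h\}$, and absorbing the factor $i^h$ into the coefficient via $m_{\kappa,h}=(-i)^h n_{\kappa,h}$. There is nothing to add.
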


Above {\it Question} \ref{quest:real-vector-basis} can be repeated replacing the $\Pp_\kappa$'s with the $\Qq_\kappa$'s. 

{\it Fix $\II=(I,J)\in\mc{N}$.} For $h\in\{0,\ldots,\kk+1\}$, let $m_{\II,\kappa,h}$ be the octonion in $\hh_{\II}$ given by
\begin{equation}\label{eq:mIIk}
m_{\II,\kappa,h}:=\psi_{\II}(m_{\kappa,h}).
\end{equation}

We define the function $\Qq_{\II,\kappa}:(\oo\setminus\{0\})\times\oo\to\oo$ by setting
\begin{align}\label{eq:QqIIk}
\Qq_{\II,\kappa}(x,c):=\,&\frac{1}{|x|^{4+2\kk}}\sum_{h=0}^{\lfloor \frac{\kk+1}{2}\rfloor}\mr{Re}(x)^{\kk+1-2h}\mr{Im}(x)^{2h}(m_{\II,\kappa,2h}c)+\\
&+\frac{1}{|x|^{4+2\kk}}\sum_{h=0}^{\lfloor \frac{\kk}{2}\rfloor}\mr{Re}(x)^{\kk-2h}\mr{Im}(x)^{2h+1}\langle I;m_{\II,\kappa,2h+1},c\rangle.\nonumber
\end{align}

\begin{defn}\label{def:sfrpf}
Let $\kk\in\N$. For each sequence $\mc{A}=\{a_\kappa\}_{\kappa\in\N^3}$ in $\oo$, we define the \emph{slice Fueter $(\II,\kk)$-reciprocal power associated to $\mc{A}$} as the slice function $\Qq_{\II,\kk;\mc{A}}:\oo\setminus\{0\}\to\oo$ given by
\[
\Qq_{\II,\kk;\mc{A}}(x):=\sum_{\kappa\in\N^3,|\kappa|=\kk}\Qq_{\II,\kappa}(x,a_\kappa). \, \text{ \bs}
\]
\end{defn}

{\it Fix $r_1,r_2,r\in\R\cup\{+\infty\}$ with $0\leq r_1<r<r_2\leq+\infty$}. We denote $B(r_1,r_2)$ the open spherical corona of $\oo$ centered at $0$ with radii $r_1$ and $r_2$, i.e. $B(r_1,r_2)=B(r_2)\setminus(\mr{cl}(B(r_1))\cup\{0\})$. We define $S_\II(r):=\{x\in\hh_{\II}\,:\,|x|=r\}$, $\m_{\II}:S_\II(r)\to\hh_\II\subset\oo$ as the outer unit normal vector field of $S_\II(r)$ in $\hh_\II$ and $\mr{d\sigma}_\II(\xi)$ as the surface element of $S_\II(r)$. Recall that $P_\kappa$ is the standard $\kappa^{\mr{th}}$ Fueter polynomial in $\hh$ and $\psi_\II:\hh\to\oo$ is the algebra embedding such that $\psi_\II(i)=I$ and $\psi_\II(j)=J$. Define the functions $\Qq_{\II,\kappa}^*,\Pp_{\II,\kappa}^*:\sph_\II(r)\to\hh_\II\subset\oo$ by setting $\Qq_{\II,\kappa}^*(x):=\psi_\II(\Qq_\kappa(\psi_\II^{-1}(x)))$ and $\Pp_{\II,\kappa}^*(x):=\psi_\II(\Pp_\kappa(\psi_\II^{-1}(x)))$ for each $x\in\sph_\II(r)$.

\begin{thm}\label{thm:laurent}
Let $g:B(r_1,r_2)\to\oo$ be a slice Fueter-regular function. Then there exist sequences $\mc{A}=\{a_\kappa\}_{\kappa\in\N^3}$ and $\mc{B}=\{b_\kappa\}_{\kappa\in\N^3}$ in $\oo$ such that $g$ can be expanded as follows:
\[
g(x)=\sum_{\kk\in\N}\Pp_{\II,\kk;\mc{A}}(x)+\sum_{\kk\in\N}\Qq_{\II,\kk;\mc{B}}(x)\quad \text{ for each $x\in B(r_1,r_2)$,}
\]
where both the series converge uniformly on each compact subset of $B(r_1,r_2)$, and it holds
\begin{align*}
a_{\kappa}&=(-1)^{|\kappa|}\int_{S_\II(r)}\Qq_{\II,\kappa}^*(\xi)(\m_\II(\xi)g(\xi))\,\mr{d\sigma_\II}(\xi),\\
b_{\kappa}&=(-1)^{|\kappa|}\int_{S_\II(r)}\Pp_{\II,\kappa}^*(\xi)(\m_\II(\xi)g(\xi))\,\mr{d\sigma}_\II(\xi).
\end{align*}
\end{thm}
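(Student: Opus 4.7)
The plan is to restrict $g$ to a quaternionic slice, apply the classical Fueter--Laurent expansion there, and then lift it back to the whole octonionic corona via the slice function machinery.

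First, I would consider the restriction $g_{\II} := g|_{\Omega_{\II}}$, where $\Omega_{\II} := B(r_1, r_2) \cap \hh_{\II}$ is a spherical shell in $\hh_{\II}$. By \eqref{eq:sfrfDexists}, $\DD_{\II}(g_{\II}) = 0$ on $\Omega_{\II}$. Decomposing $\oo = \hh + \ell\hh$ as in the Introduction and writing $g_\II = \phi + \ell\psi$ with $\phi, \psi : \Omega_\II \to \hh$, the Cayley--Dickson identity $I(\ell q) = -\ell(Iq)$ for $q \in \hh$ (valid by Artin applied to the pair $\{\ell, I\}$, and analogously for $J, IJ$) shows that $\DD_\II g_\II = 0$ is equivalent to the pair $\DD_\II \phi = 0$, $\overline{\DD}_\II \psi = 0$ of classical (left and conjugate) Fueter-type equations on $\Omega_\II$. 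Hence the standard quaternionic Fueter--Laurent expansion on a spherical shell (see \cite{Su1979,GHS2008}) applies componentwise and, after recombining, produces uniquely determined sequences $\mc{A} = \{a_\kappa\}, \mc{B} = \{b_\kappa\}$ in $\oo$ such that
$$g_{\II}(x) = \sum_{\kappa \in \N^3} P_\kappa^{\II}(x)\, a_\kappa + \sum_{\kappa \in \N^3} Q_\kappa^{\II}(x)\, b_\kappa \qquad (x \in \Omega_{\II}),$$
with uniform convergence on compact subsets of $\Omega_{\II}$, where $P_\kappa^{\II} := \psi_{\II}\circ P_\kappa \circ \psi_{\II}^{-1}$ and $Q_\kappa^{\II} := \psi_{\II}\circ Q_\kappa \circ \psi_{\II}^{-1}$ coincide with $\Pp_{\II,\kappa}^*$ and $\Qq_{\II,\kappa}^*$ on $\sph_\II(r)$. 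The integral formulas for $a_\kappa$ and $b_\kappa$ claimed in the theorem are then just the classical biorthogonality of the $P_\kappa$'s against the $Q_\kappa$'s on quaternionic spheres centred at the origin; the sign $(-1)^{|\kappa|}$ stems from the convention $Q_\kappa = \partial_\kappa \Env$ of \eqref{eq:Qkappa}.

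Next, I would lift each $\kk$-homogeneous summand to a slice Fueter-regular function on all of $B(r_1,r_2)$. The key point is that definitions \eqref{eq:sfpf} and \eqref{eq:QqIIk}, together with the expansions of Lemmata~\ref{lem:18} and~\ref{lem:19} and Artin's theorem, are designed precisely so that
$$\Pp_{\II,\kappa}(x,c)\big|_{x \in \hh_{\II}} = P_\kappa^{\II}(x)\, c \qquad \text{and} \qquad \Qq_{\II,\kappa}(x,c)\big|_{x \in \hh_{\II}\setminus\{0\}} = Q_\kappa^{\II}(x)\, c$$
for every $c \in \oo$; the asymmetric bracketing $\langle I;\ell,c\rangle = -I((I\ell)c)$ on the odd-index terms is exactly the camshaft correction that, on $\oo$, compensates for the failure of associativity when $c \notin \hh_\II$, while coinciding with the ordinary right-product whenever $c \in \hh_\II$. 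Consequently each $\Pp_{\II,\kk;\mc{A}}$ and each $\Qq_{\II,\kk;\mc{B}}$ is a slice function whose restriction to $\hh_\II$ reproduces the $\kk$-homogeneous piece of the Fueter--Laurent expansion of $g_\II$. Slice Fueter-regularity of $\Pp_{\II,\kk;\mc{A}}$ is Corollary~\ref{cor:taylor}, and that of $\Qq_{\II,\kk;\mc{B}}$ follows by the same argument applied on $\oo \setminus \mr{cl}(B(r_1))$, or alternatively via a Kelvin-type inversion using that $\Env$ is Fueter-regular away from $0$. Uniform convergence on compact subsets of $B(r_1,r_2)$ is inherited from the quaternionic series by the elementary homogeneity bounds $|\Pp_{\II,\kappa}(x,c)| \le C_\kappa\, |x|^{|\kappa|}\, |c|$ and $|\Qq_{\II,\kappa}(x,c)| \le C'_\kappa\, |x|^{-|\kappa|-3}\, |c|$ read off from \eqref{eq:sfpf} and \eqref{eq:QqIIk}.

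To conclude, the difference
$$h := g - \sum_{\kk\in\N}\Pp_{\II,\kk;\mc{A}} - \sum_{\kk\in\N}\Qq_{\II,\kk;\mc{B}}$$
is a slice Fueter-regular function on $B(r_1,r_2)$ that vanishes on $\Omega_\II$; since a slice function is determined by its restriction to any single complex slice $\C_I \subset \hh_\II$ through the unique stem function inducing it, we conclude $h \equiv 0$ on $B(r_1,r_2)$. \emph{The main obstacle} I anticipate is the careful algebraic verification of the two displayed identities for $\Pp_{\II,\kappa}$ and $\Qq_{\II,\kappa}$ for $c$ ranging over all of $\oo$, i.e., the check that \eqref{eq:sfpf} and \eqref{eq:QqIIk} really do realise the slice extensions of octonionic right-multiplication by $c$ of the classical Fueter polynomials and their $Q$-duals. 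Once this is in place, the rest reduces to a routine combination of classical Fueter--Laurent theory with Theorem~\ref{thm:1} and Corollary~\ref{cor:taylor}.
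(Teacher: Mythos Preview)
Your overall strategy matches the paper's: obtain the Fueter--Laurent expansion on the quaternionic slice $\hh_\II$ (the paper black-boxes this step by citing \cite[Theorem~6.5]{JRS2019}, while you re-derive it via the splitting $g_\II=\phi+\ell\psi$), then lift to all of $B(r_1,r_2)$ via sliceness and the identity principle. The integral formulas for $a_\kappa,b_\kappa$ likewise come straight from the quaternionic theory.

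There is, however, a genuine gap in your lifting step. The displayed identity
\[
\Pp_{\II,\kappa}(x,c)\big|_{x\in\hh_\II}=P_\kappa^\II(x)\,c
\]
is \emph{false} on $\hh_\II\setminus\C_I$. For instance, take $\kappa=(0,0,1)$: from the list after Lemma~\ref{lem:18} one has $\ell_{(0,0,1),0}=-k$ and $\ell_{(0,0,1),1}=0$, so by \eqref{eq:sfpf} $\Pp_{\II,(0,0,1)}(x,c)=-x_0(IJ)c$ for every $x\in\oo$, while $P_{(0,0,1)}^\II(x)c=(x_3-x_0(IJ))c$ for $x=x_0+x_1I+x_2J+x_3(IJ)\in\hh_\II$; these differ by $x_3c$. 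The reason is structural: $x\mapsto\Pp_{\II,\kappa}(x,c)$ is a (complex-)slice function, hence depends on $\mr{Im}(x)$ only through $|\mr{Im}(x)|$ and the direction $\mr{Im}(x)/|\mr{Im}(x)|$, whereas $P_\kappa^\II$ sees the individual coordinates $x_1,x_2,x_3$ inside $\hh_\II$. What \emph{is} true is precisely the content of Lemma~\ref{lem:ppqq}: the two functions agree on $\C_I$, and only there. Consequently your difference $h$ does \emph{not} vanish on $\Omega_\II$; it vanishes on $\Omega_D\cap\C_I$, which is still enough by Corollary~\ref{cor:ip}. The fix is simply to drop the $\hh_\II$-match and work on $\C_I$ from the outset; this is exactly what the paper does (combining Lemma~\ref{lem:ppqq}, Corollary~\ref{cor:ip}, and \cite[Theorem~3.4]{GPS2017-singular} for the convergence on the full domain), and it is both correct and shorter than what you propose.
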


\begin{cor}\label{cor:laurent}
Let $g:B(r_1,r_2)\to\oo$, $\mc{A}$ and $\mc{B}$ be as in the statement of the preceding theorem. Then, for each $\kk\in\N$, $\Pp_{\II,\kk;\mc{A}}$ and $\Qq_{\II,\kk;\mc{B}}$ are slice Fueter-regular functions on $B(r_1,r_2)$.    
\end{cor}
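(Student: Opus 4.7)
My plan is to rely on the characterization of slice Fueter-regularity provided by Theorem \ref{thm:sfo}: a $\mscr{C}^3$ function is slice Fueter-regular if and only if it is a slice function and belongs to the kernel of the slice Fueter operator $\overline{\vartheta}_F$ on its non-real points. Every $\Pp_{\II,\kk;\mc{A}}$ and every $\Qq_{\II,\kk;\mc{B}}$ is a slice function by construction (Definitions \ref{def:sfpf} and \ref{def:sfrpf}) and is of class $\mscr{C}^\infty$ on $B(r_1,r_2)$, so the corollary reduces to showing that $\overline{\vartheta}_F(\Pp_{\II,\kk;\mc{A}})=0$ and $\overline{\vartheta}_F(\Qq_{\II,\kk;\mc{B}})=0$ on $B(r_1,r_2)\setminus\R$ for every $\kk\in\N$.

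The decisive ingredient is homogeneity. Formula \eqref{eq:sfpf} immediately shows that $\Pp_{\II,\kk;\mc{A}}(tx)=t^{\kk}\Pp_{\II,\kk;\mc{A}}(x)$ for every $t>0$, while formula \eqref{eq:QqIIk} shows that $\Qq_{\II,\kk;\mc{B}}(tx)=t^{-\kk-3}\Qq_{\II,\kk;\mc{B}}(x)$, because $|x|^{-4-2\kk}$ scales as $t^{-4-2\kk}$ and each surviving polynomial factor has total degree $\kk+1$. Thus the summands of the Laurent expansion of $g$ have pairwise distinct homogeneity degrees lying in $\N\cup\{-n:n\in\N,n\geq 3\}$. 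Moreover, using the decomposition $\overline{\vartheta}_F=\frac{\partial}{\partial x_0}-\mr{Im}^{-1}\mr{E}-\frac{1}{6}\mr{Im}^{-1}\Gamma$, one sees that $\overline{\vartheta}_F$ transforms a $d$-homogeneous function into a $(d-1)$-homogeneous function: indeed $\frac{\partial}{\partial x_0}$ drops degree by one, the Euler operator $\mr{E}$ and each $L_{mn}$ (hence $\Gamma$) preserve degree, and multiplication by $\mr{Im}(x)^{-1}$ drops degree by one.

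Assuming term-by-term application of $\overline{\vartheta}_F$ is legitimate, combining $\overline{\vartheta}_F(g)=0$ with the Laurent expansion of $g$ yields
\begin{equation*}
\sum_{\kk\in\N}\overline{\vartheta}_F(\Pp_{\II,\kk;\mc{A}})(x)+\sum_{\kk\in\N}\overline{\vartheta}_F(\Qq_{\II,\kk;\mc{B}})(x)=0
\end{equation*}
for every $x\in B(r_1,r_2)\setminus\R$. Fixing such an $x$ and replacing $x$ by $tx$ for $t$ in a small open interval of positive reals, this becomes an identity $\sum_n c_n(x)\,t^n=0$ with $n$ running over the pairwise distinct set of integers $\{\kk-1:\kk\in\N\}\cup\{-\kk-4:\kk\in\N\}$. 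Uniqueness of Laurent expansions in a single real variable then forces $c_n(x)=0$ for every $n$, which is the desired conclusion.

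The principal obstacle is the rigorous justification of differentiating the Laurent series term by term under the first order operator $\overline{\vartheta}_F$. I would handle it by working on an arbitrary closed corona $\rho_1\leq|x|\leq\rho_2$ strictly contained in $B(r_1,r_2)$ and by exploiting the integral formulas for $a_\kappa$ and $b_\kappa$ from Theorem \ref{thm:laurent} to obtain Cauchy-type bounds on their norms; together with the explicit polynomial-plus-rational shape of $\Pp_{\II,\kk;\mc{A}}$ and $\Qq_{\II,\kk;\mc{B}}$ and their first partial derivatives, these bounds yield uniform convergence of the termwise differentiated series on the chosen corona, which validates the term-by-term action of $\overline{\vartheta}_F$ and completes the argument.
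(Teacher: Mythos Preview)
Your proposal is correct and follows essentially the same strategy as the paper. The paper's proof (which mirrors that of Corollary~\ref{cor:taylor}) uses the characterization $\overline{\partial}f=f_s'$ from Theorem~\ref{thm:2}(2a) in place of your $\overline{\vartheta}_F(f)=0$ from Theorem~\ref{thm:sfo}; since $\overline{\vartheta}_F(f)=\overline{\partial}f-f_s'$ on slice functions, these are the same condition, and the remainder of the argument---term-by-term differentiation followed by separation of the summands by their distinct homogeneity degrees---is identical in both proofs.
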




\begin{remark}\label{rem:general-real-case-laurent}
Let $y\in\R$, let $B_y(r_1,r_2):=y+B(r_1,r_2)$ and let $g:B_y(r_1,r_2)\to\oo$ be a slice Fueter-regular function. Define the sequences $\mc{A}=\{a_\kappa\}_{\kappa\in\N^3}$ and $\mc{B}=\{b_\kappa\}_{\kappa\in\N^3}$ in $\oo$ by
\begin{align*}
a_{\kappa}&\textstyle
=(-1)^{|\kappa|}\int_{S_\II(r)}\Qq_{\II,\kappa}^*(\xi)(\m_\II(\xi)g(\xi+y))\,\mr{d\sigma_\II}(\xi),\\
b_{\kappa}&\textstyle
=(-1)^{|\kappa|}\int_{S_\II(r)}\Pp_{\II,\kappa}^*(\xi)(\m_\II(\xi)g(\xi+y))\,\mr{d\sigma}_\II(\xi).
\end{align*}
Then the functions $\oo\setminus\{y\}\to\oo$, $x\mapsto\Pp_{\II,\kk;\mc{A}}(x-y)$ and $x\mapsto\Qq_{\II,\kk;\mc{B}}(x-y)$ are slice Fueter-regular, the series $\sum_{\kk\in\N}\Pp_{\II,\kk;\mc{A}}(x-y)$ and $\sum_{\kappa\in\N^3}\Qq_{\II,\kk;\mc{B}}(x-y)$ converges uniformly on compact subsets of $B_y(r_1,r_2)$, and the sum of the series $\sum_{\kk\in\N}\Pp_{\II,\kk;\mc{A}}(x-y)+\sum_{\kk\in\N}\Qq_{\II,\kk;\mc{B}}(x-y)$ is equal to $g(x)$ for each $x\in B_y(r_1,r_2)$. \bs
\end{remark}

Let $y\in\Omega_D$. In paper \cite{GPS2017}, generalizing a quaternionic pioneering idea of C. Stoppato \cite{St2012}, it is proven the existence of a family $\{\mscr{S}_{y,k}:\oo\to\oo\}_{k\in\N}$ of slice regular polynomial functions with the following property: each slice regular function $f:\Omega_D\to\oo$ can be expanded as $f(x)=\sum_{k\in\N}\mscr{S}_{y,k}(x)\cdot a_k$ on certain open circular neighborhoods $U$ of $\sph_y$ in $\oo$, where $\{a_k\}_{k\in\N}$ is a sequence in $\oo$ uniquely determined by $f$, and the series converges uniformly on compact subsets of $U$. In addition, 
there exists another family $\{\mscr{S}_{y,k}:\oo\setminus\sph_y\to\oo\}_{k<0}$ of slice regular rational functions such that, given any slice regular function $g:\Omega_D\setminus\sph_y\to\oo$, $g$ can be expanded as $g(x)=\sum_{k\in\Z}\mscr{S}_{y,k}(x)\cdot b_k$ on $U\setminus\sph_y$, where the sequence $\{b_k\}_{k\in\N}$ in $\oo$ is unique and the series converges uniformly on compact subsets of $U\setminus\sph_y$. It is worth recalling that, if $y\in\R$, then $\sph_y=\{y\}$, $\mscr{S}_{y,k}$ is the usual $k^{\mr{th}}$-power centered at $y$, i.e. $\mscr{S}_{y,k}(x)=(x-y)^k$, and the neighborhoods $U$ are standard open balls of $\oo$ centered~at~$y$.

Our Theorems \ref{thm:taylor} and \ref{thm:laurent} stated above are slice Fueter-regular versions of the just mentioned results of \cite{GPS2017} in the case $y=0$, or better $y\in\R$, see Remarks \ref{rem:general-real-case-taylor} and \ref{rem:general-real-case-laurent}.

Let $y$ be any point of $\Omega_D$, and let $f:\Omega_D\to\oo$ and $g:\Omega_D\setminus\sph_y\to\oo$ be slice Fueter-regular functions.

\begin{quest}\label{quest:4}
Is it possible to generalize Theorems \ref{thm:taylor} and \ref{thm:laurent} to the case in which $y\not\in\R$? More precisely, given any $y\in\Omega_D\setminus\R$, do there exist slice Fueter-regular functions $\Pp_{\II,\kk;f,y}:\oo\to\oo$, $\Pp_{\II,\kk;\mc{A}}:\oo\to\oo$ and $\Qq_{\II,\kk;\mc{B}}:\oo\setminus\sph_y\to\oo$ for each $\kk\in\N$ such that the $\Pp_{\II,\kk;f,y}(x)$'s and the $\Pp_{\II,\kk;\mc{A}}(x)$'s are polynomial functions in the eight real components $(x_0,\ldots,x_7)$ of $x\in\oo$ of degree~$\kk$, the $\Qq_{\II,\kk;\mc{B}}(x)$'s are rational functions in $(x_0,\ldots,x_7)$ of degree $-\kk$, and these functions satisfy the following two assertions? 
\end{quest}

\begin{assertion}
There exists an open circular neighborhood $U$ of $\sph_y$ in $\oo$ such that
\[
f(x)=\sum_{\kk\in\N}\Pp_{\II,\kk;f,y}(x)\quad \text{ for each $x\in U$,}
\]
where the series converges uniformly on each compact subset of $U$.
\end{assertion}

\begin{assertion}
There exists an open circular neighborhood $U$ of $\sph_y$ in $\oo$ such that
\[
g(x)=\sum_{\kk\in\N}\Pp_{\II,\kk;\mc{A}}(x)+\sum_{\kk\in\N}\Qq_{\II,\kk;\mc{B}}(x)\quad \text{ for each $x\in U\setminus\sph_y$,}
\]
where both the series converge uniformly on each compact subset of $U\setminus\sph_y$.
\end{assertion}


\subsection{Maximum Modulus Principle}\label{subsec:mmp}

Slice Fueter-regular functions enjoy the following version of Maximum Modulus Principle. Recall that, given $f\in\mc{S}(\Omega_D,\oo)$, $V(f_s')$ denotes the zero set of the spherical derivative $f_s'$ of $f$, i.e. $V(f_s')=\{x\in\Omega_D\setminus\R\,:\,f_s'(x)=0\}$.

\begin{thm}\label{thm:MMP}
Let $f:\Omega_D\to\oo$ be a slice Fueter-regular function. Suppose that $f$ satisfies one of the following conditions:
\begin{itemize}
 \item[(1)] $|f|$ has a maximum at a point of $\Omega_D$.
 \item[(2)] $|f|$ has a local maximum at a point of $\Omega_D\setminus V(f_s')$. 
\end{itemize}
Then $f$ is constant. 
\end{thm}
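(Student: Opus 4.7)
\emph{Plan.} The strategy is to reduce the Maximum Modulus Principle to the classical subharmonic MMP on each quaternionic slice through the maximum point, and then combine the resulting slice-wise constancy to obtain constancy on a full $8$-dimensional neighborhood, which real analyticity propagates to all of $\Omega_D$.

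Let $x_0 \in \Omega_D$ be a point at which $|f|$ attains the relevant maximum. For every $\II=(I,J) \in \mc{N}$ with $x_0 \in \hh_\II$, choose $L \in \sph$ with $L \perp \hh_\II$. The Cayley-Dickson decomposition $\oo = \hh_\II \oplus L\hh_\II$ obeys $a(Lb) = L(\overline{a}\,b)$ for $a,b \in \hh_\II$. Write the restriction $f|_{\Omega_\II} = g_1 + L g_2$ with smooth $g_1, g_2 : \Omega_\II \to \hh_\II$ uniquely determined by the orthogonal projection. Exploiting this Cayley-Dickson identity together with Lemma~\ref{lem:F1-F2} (needed to straighten out the non-associative reparenthesizations that occur when moving $L$ past the imaginary units $I, J, IJ$), one computes
\[
\DD_\II f = \DD_\II g_1 + L\bigg(\frac{\partial g_2}{\partial x_0} - I\frac{\partial g_2}{\partial x_1} - J\frac{\partial g_2}{\partial x_2} - (IJ)\frac{\partial g_2}{\partial x_3}\bigg).
\]
Since $\DD_\II f = 0$ by \eqref{eq:sfrfDforall} and $\hh_\II \perp L\hh_\II$, the equation splits into $\DD_\II g_1 = 0$ and an analogous conjugate-Fueter equation for $g_2$; hence both $g_1$ and $g_2$ are harmonic on $\Omega_\II \subset \R^4$, since the product of $\DD_\II$ with its conjugate is the Laplacian.

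As $L$ is an isometry between $\hh_\II$ and $L\hh_\II$, we have $|f|^2 = |g_1|^2 + |g_2|^2$, a subharmonic function on $\Omega_\II$ (because $\Delta|g_i|^2 = 2|\nabla g_i|^2 \geq 0$). The classical subharmonic MMP, applied to the local max of $|f|^2$ at $x_0 \in \Omega_\II$, implies that $|f|^2$ is constant on a $\hh_\II$-ball around $x_0$. The identity $\Delta|g_1|^2 + \Delta|g_2|^2 = 0$ together with $\Delta|g_i|^2 \geq 0$ forces $|\nabla g_1| = |\nabla g_2| = 0$ there, so $f \equiv f(x_0)$ on that small $\hh_\II$-ball. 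Fix $\epsilon > 0$ with the Euclidean ball $B(x_0, \epsilon) \subset \Omega_D$; for each $\II$ with $x_0 \in \hh_\II$ the above yields $f \equiv f(x_0)$ on $B(x_0, \epsilon) \cap \hh_\II$. A direct check (using $\II = (I_0, J)$ with $I_0 = \mr{Im}(x_0)/|\mr{Im}(x_0)|$ and $J$ varying over $\sph \cap I_0^\perp$ if $x_0 \not\in \R$, and any $\II \in \mc{N}$ if $x_0 \in \R$) shows $\bigcup_{\II \,:\, x_0 \in \hh_\II} \hh_\II = \oo$, hence $f \equiv f(x_0)$ on the genuinely $8$-dimensional open set $B(x_0, \epsilon)$. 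The real-analytic identity principle, applied via the real analyticity from Theorem~\ref{thm:real-analyticity} and the connectedness of $\Omega_D$ from \eqref{assumption}, then gives that $f$ is constant on all of $\Omega_D$.

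\emph{Main obstacle.} The technical heart is the splitting step. Naively one writes $I(L\partial_{x_1}g_2) = -L(I\partial_{x_1}g_2)$ via Cayley-Dickson, but in $\oo$ the rearrangements required when iteratively moving $L$ past the imaginary units involve genuine non-associative terms; Lemma~\ref{lem:F1-F2}, which pointwise annihilates the camshaft effect in products involving pairs of octonionic constants, is exactly the tool needed to extract the clean identities $\DD_\II g_1 = 0$ and the conjugate equation for $g_2$. The hypotheses (1) and (2) in the statement play essentially the same role: only the existence of a local maximum of $|f|^2$ on the quaternionic slice $\Omega_\II$ through $x_0$ is used in the argument above.
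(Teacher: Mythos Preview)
Your subharmonic approach is correct and genuinely different from the paper's proof, but your invocation of Lemma~\ref{lem:F1-F2} is misplaced. The Cayley--Dickson splitting $\DD_\II(g_1+Lg_2)=\DD_\II g_1+L\big(\partial_0 g_2-I\partial_1 g_2-J\partial_2 g_2-(IJ)\partial_3 g_2\big)$ follows directly from the identity $a(Ld)=L(\bar a\,d)$ for $a,d\in\hh_\II$, which is just the Cayley--Dickson multiplication rule; there are no non-associative obstructions because every product occurring is of the form (element of $\hh_\II$)$\cdot$(element of $L\hh_\II$). This is exactly \cite[Lemma~4.6 and Remark~4.7]{JRS2019}, which the paper cites. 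Lemma~\ref{lem:F1-F2} has nothing to do with this splitting: it concerns the slice products $(f\cdot a)\cdot b$ and produces specific constants $a,b$ making $((f\cdot a)\cdot b)(p)$ a positive real number.

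Comparing the two routes: the paper first replaces $f$ by $g=(f\cdot a)\cdot b$ via Lemma~\ref{lem:F1-F2}/Corollary~\ref{cor:F1-F2} so that $g(p)>0$ is real, then transfers the maximum of $|f|$ to a maximum of $|g|$ (globally via Lemma~\ref{lem:oo}(1), locally via the homeomorphism $\Phi$ of Lemma~\ref{lem:oo}(2), which is where the hypothesis $p\notin V(f_s')$ enters), splits $\hat g=\hat g_1+\hat g_2L$, observes $\hat g_2(p)=0$, and applies the Fueter Maximum Modulus Principle to $\hat g_1$ alone. Your route skips the slice-product modification entirely: since $g_1,g_2$ are harmonic, $|f|^2=|g_1|^2+|g_2|^2$ is subharmonic on each $\Omega_\II$, and the strong maximum principle for subharmonic functions gives constancy of $f$ on a $4$-ball in each $\hh_\II$ through $x_0$, whose union is a genuine $8$-ball. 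Your argument is more elementary, does not use the hypothesis $x_0\notin V(f_s')$ in case~(2), and therefore in fact yields a stronger statement than Theorem~\ref{thm:MMP} --- it gives a negative answer to Question~\ref{quest:38}. You should remove the reference to Lemma~\ref{lem:F1-F2} from your write-up and simply cite the Cayley--Dickson rule (or \cite[Lemma~4.6]{JRS2019}) for the splitting.
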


It would be very interesting to know the answer to the following problem:

\begin{quest}\label{quest:38}
Does there exist a non-constant slice Fueter-regular function $f:\Omega_D\to\oo$ such that $|f|$ has a local maximum at a point of $V(f'_s)$?
\end{quest}



\section{The proofs} \label{sec:proofs}


\subsection*{Proofs of Section \ref{subsec:comparison} and proofs of assertions (\ref{eq:sfrfDforall}) and (\ref{eq:sfrfDexists})}

\begin{proof}[Proof of Theorem \ref{thm:1}]
Let $\mc{F}=(\mc{F}_0,\mc{F}_1,\mc{F}_2,\mc{F}_3):E\to\oo^4$ be a $O(3)$-stem function, and let $(F_1,F_2):D\to\oo^2$ be the map given by $F_1(x_0,x_1):=\mc{F}_0(x_0,x_1,0,0)$ and $F_2(x_0,x_1):=\mc{F}_1(x_0,x_1,0,0)$. Consider the matrices $R_1.R_2,R_3$ in $O(3)$ such that 
\begin{align*}
R_1(x_0,x_1,x_2,x_3)&=(x_0,-x_1,x_2,x_3),\\
R_2(x_0,x_1,x_2,x_3)&=(x_0,x_1,-x_2,x_3),\\
R_3(x_0,x_1,x_2,x_3)&=(x_0,x_1,x_2,-x_3).
\end{align*}
Since $\mc{F}(R_hv)=R_h\mc{F}(v)$ for each $v\in\Omega_D$ and $h\in\{1,2,3\}$, if we set $v=(x_0,x_1,0,0)\in D$, we obtain that $(F_1,F_2)$ is a stem function and $\mc{F}_2(x_1,x_2,0,0)=\mc{F}_3(x_1,x_2,0,0)=0$.

Let $x$ be any point of $\Omega_D$. Write $x=x_0+x_1I\in\Omega_D$ for some $x_0,x_1\in\R$ and $I\in\sph$. Define $v:=(x_0,x_1,0,0)\in E$ and choose $J\in\sph$ such that $I\perp J$, so $x=x_0+x_1I+0J+0(IJ)\in\Theta_E=\Omega_D$. Since $\mc{F}_2(v)=\mc{F}_3(v)=0$, it follows that
\begin{align*}
((\I\circ\Phi)(\mc{F}))(x)&=(\I(\Phi(\mc{F})))(x)=\mc{F}_0(v)+I\mc{F}_1(v)=\\
&=\mc{F}_0(v)+I\mc{F}_1(v)+J\mc{F}_2(v)+(IJ)\mc{F}_3(v)=\\
&=\I_{O(3)}(\mc{F})(x).
\end{align*}
In particular, this proves that $\mc{S}_{O(3)}(\Omega_D,\oo)\subset\mc{S}(\Omega_D,\oo)$.

Consider now $F=(F_1,F_2)\in\mr{Stem}(D,\oo^2)$ and define the map $\mc{F}=(\mc{F}_0,\mc{F}_1,\mc{F}_2,\mc{F}_3):E\to\oo^4$ by equalities \eqref{eq:FF} if $x\in E\setminus\R$, and $\mc{F}(x_0,0,0,0):=(F_1(x_0,0),0,0,0)$ for each $x_0\in E\cap\R=D\cap\R$.

We have to show that $\mc{F}$ is a $O(3)$-stem function; note that, if this is true, then $\I_{O(3)}(\mc{F})$ makes sense and it is equal to $\I(F)$, because $F=\Phi(\mc{F})$. To show that $\mc{F}$ is a $O(3)$-stem function, it suffices to prove that $\mc{F}(Bw)=B\mc{F}(w)$ for each $B\in O(3)$ and for each $w=(w_0,w_1,0,0)\in E$ with $w_1>0$. Indeed, given any $v\in E\setminus\R$, there exist $C\in O(3)$ and $w=(w_0,w_1,0,0)\in E$ with $w_1>0$ such that $v=Cw$; hence, given any $A\in O(3)$, it holds:
\[
\mc{F}(Av)=\mc{F}(A(Cw))=\mc{F}((AC)w)=(AC)\mc{F}(w)=A(C\mc{F}(w))=A\mc{F}(Cw)=A\mc{F}(v).
\]

Consider $w=(w_0,w_1,0,0)\in E$ with $w_1\geq0$, and let $w':=(w_1,0,0)\in\R^3$, so $w=(w_0,w')$. 
Choose arbitrarily $B\in O(3)$ and denote by $B^*$ the orthogonal $3\times3$ real matrix such that
\[
B=
\left(
\begin{array}{c|c}
1 & {\boldsymbol{0}}\\
\hline
{\boldsymbol{0}} & B^*
\end{array}
\right),
\]
where $\boldsymbol{0}$ denotes both a null $3$-entries row and a null $3$-entries column. Note that $Bw=(w_0,B^*w')$ and $|B^*w'|=|w'|=w_1$. Indicate $B^*_1\in\R^3$ the first column of $B^*$. 

If $w_1=0$, then $w\in\R$, $Bw=(w_0,0,0,0)=w$, and hence $\mc{F}(Bw)=\mc{F}(w)=(F_1(w_0,0),0,0,0)=B\mc{F}(w)$, as desired.

Now suppose that $w_1>0$. For each $v=(x_0,x_1,x_2,x_3)\in\R^4$, we denote $\mr{Diag}(v)$ the $4\times 4$ diagonal matrix whose diagonal entries are $x_0$, $x_1$, $x_2$ and $x_3$. Furthermore, if $v':=(x_1,x_2,x_3)$ and hence $v=(x_0,v')\in E$, we define $\mr{Y}(v)\in\oo^4$ by setting
\[
\mr{Y}(v)=\mr{Y}(x_0,v'):=(F_1(x_0,|v'|),F_2(x_0,|v'|),F_2(x_0,|v'|),F_2(x_0,|v'|)).
\]

Thanks to \eqref{eq:FF}, we know that
\begin{align*}
\mc{F}(w)&=\mr{Diag}((1,w'/|w'|))\mr{Y}(w)=\mr{Diag}((1,1,0,0))\mr{Y}(w)=\\
&=(F_1(w_0,w_1),F_2(w_0,w_1),0,0),\vspace{.7em}\\
\mc{F}(Bw)&=\mc{F}((w_0,B^*w'))=\mr{Diag}((1,B^*w'/|B^*w'|))\mr{Y}(w_0,B^*w')=\\
&=\mr{Diag}((1,B^*w'/w_1))\mr{Y}(w_0,w')=\mr{Diag}((1,B^*_1))\mr{Y}(w_0,w')=\\
&=(F_1(w_0,w_1),B^*_1F_2(w_0,w_1)).
\end{align*}
It follows that
\begin{align*}
B\mc{F}(w)=(F_1(w_0,w_1),B^*_1F_2(w_0,w_1))=\mc{F}(Bw),
\end{align*}
as desired. In particular, we have: $\mc{S}(\Omega_D,\oo)\subset\mc{S}_{O(3)}(\Omega_D,\oo)$, so $\mc{S}(\Omega_D,\oo)=\mc{S}_{O(3)}(\Omega_D,\oo)$.

Finally, we have to show that $F=(F_1,F_2)$ is real analytic if and only if $\mc{F}=(\mc{F}_0,\mc{F}_1,\mc{F}_2,\mc{F}_3)$~is. If $\mc{F}$ is real analytic, then by \eqref{eq:FFFF} it is evident that $F$ is real analytic as well. Suppose that $F$ is real analytic. Let $y\in E$. If $y\not\in\R$, then $\mc{F}$ is real analytic locally at $y$ in $E$, because the function $E\to\R$, $v=(x_0,x_1,x_2,x_3)\mapsto r(v)=(x_1^2+x_2^2+x_3^2)^{1/2}$ is. Let $y\in\R$. By \cite{Wh1943}, there exist an open neighborhood $U$ of $y$ in $D$, an open neighborhood $V$ of $y$ in $\R^2$ and real analytic functions $G_1,G_2:V\to\oo$ such that, for each $(x_0,x_1)\in U$, we have: $(x_0,x_1^2)\in V$, $F_1(x_0,x_1)=G_1(x_0,x_1^2)$ and $F_2(x_0,x_1)=x_1G_2(x_0,x_1^2)$. Then the functions $\mc{F}_0(v)=G_1(x_0,r(v)^2)$ and $\mc{F}_h(v)=x_hG_2(x_0,r(v)^2)$ for $h\in\{1,2,3\}$ are real analytic locally at $y$ in $E$, because $v\mapsto r(v)^2$ is.
\end{proof}

\begin{proof}[{Proof of Theorem \ref{thm:real-analyticity}}]
Let $f:\Omega_D\to\oo$ be a slice Fueter-regular function. By Theorem \ref{thm:1}, $f$ is a slice function. Let $(F_1,F_2)$ be the stem function inducing $f$. Choose $\II=(I,J)\in\mc{N}$. Thanks to Lemma 4.6 of \cite{JRS2019} (or equation (6.6) of the same paper), we know that the restriction of $f$ to $\Omega_\II=\Omega_D\cap\hh_{\II}$ is real analytic. In particular, this is true for the restriction of $f$ to $\Omega_D\cap\C_I$. Since $F_1(x_0,x_1)=\frac{1}{2}(f(x_0+x_1I)+f(x_0-x_1I))$ and $F_2(x_0,x_1)=-\frac{1}{2}I(f(x_0+x_1I)-f(x_0-x_1I))$ for each $(x_0,x_1)\in D$, it turns out that $F_1$ and $F_2$ are real analytic as well. Now Proposition 7(3) of \cite{GP2011} implies that $f$ is real analytic on the whole $\Omega_D$.
\end{proof}

\begin{proof}[Proof of Theorem \ref{thm:2}]
$(1)\Longrightarrow(2)$. Suppose that $f=\I(F):\Omega_D\to\oo$ is a slice Fueter-regular function. Let $\mc{F}=(\mc{F}_0,\mc{F}_1,\mc{F}_2,\mc{F}_3):E\to\oo^4$ be the $O(3)$-stem function given by $\mc{F}=\Phi^{-1}(F)$. By Theorem~\ref{thm:real-analyticity}, we know that $F$ is real analytic; in particular, $F_1$ is of class $\mscr{C}^2$ and $F_2$ of class~$\mscr{C}^3$. Moreover, if $v=(x_0,x_1,x_3,x_4)\in E\setminus\R$ and $r=(x_1^2+x_2^2+x_3)^{1/2}$, then equations \eqref{eq:FF} hold. As~a consequence, differentiating such equations, we have:
\begin{equation}\label{eq:array}
\begin{array}{l}
\displaystyle
\frac{\partial\mc{F}_0}{\partial x_0}(v)=\frac{\partial F_1}{\partial x_0}(x_0,r),\vspace{.7em}\\
\displaystyle
\frac{\partial\mc{F}_0}{\partial x_j}(v)=\frac{x_j}{r}\,\frac{\partial F_1}{\partial x_1}(x_0,r) \quad \text{ for each $j\in\{1,2,3\}$},\vspace{.7em}\\
\displaystyle
\frac{\partial\mc{F}_h}{\partial x_0}(v)=\frac{x_h}{r}\,\frac{\partial F_2}{\partial x_0}(x_0,r) \quad \text{ for each $h\in\{1,2,3\}$},\vspace{.7em}\\
\displaystyle
\frac{\partial\mc{F}_h}{\partial x_j}(v)=\frac{\delta_{hj}r^2-x_hx_j}{r^3}\,F_2(x_0,r)+\frac{x_hx_j}{r^2}\,\frac{\partial F_2}{\partial x_1}(x_0,r) \quad \text{ for each $h,j\in\{1,2,3\}$.}
\end{array}
\end{equation}
Note that $\frac{\partial\mc{F}_j}{\partial x_h}=\frac{\partial\mc{F}_h}{\partial x_j}$ for each $h,j\in\{1,2,3\}$. We have:
\begin{equation}\label{eq:array'}
\begin{array}{l}
\displaystyle
\frac{\partial \mc{F}_0}{\partial x_0}(v)-\frac{\partial \mc{F}_1}{\partial x_1}(v)-\frac{\partial \mc{F}_2}{\partial x_2}(v)-\frac{\partial \mc{F}_3}{\partial x_3}(v)=\frac{\partial F_1}{\partial x_0}(x_0,r)-\frac{\partial F_2}{\partial x_1}(x_0,r)-\frac{2}{r}\,F_2(x_0,r),\vspace{.7em}\\
\displaystyle
\frac{\partial \mc{F}_0}{\partial x_1}(v)+\frac{\partial \mc{F}_1}{\partial x_0}(v)-\frac{\partial \mc{F}_2}{\partial x_3}(v)+\frac{\partial \mc{F}_3}{\partial x_2}(v)=\frac{x_1}{r}\left(\frac{\partial F_1}{\partial x_1}(x_0,r)+\frac{\partial F_2}{\partial x_0}(x_1,r)\right),\vspace{.7em}\\
\displaystyle
\frac{\partial \mc{F}_0}{\partial x_2}(v)+\frac{\partial \mc{F}_1}{\partial x_3}(v)+\frac{\partial \mc{F}_2}{\partial x_0}(v)-\frac{\partial \mc{F}_3}{\partial x_1}(v)=\frac{x_2}{r}\left(\frac{\partial F_1}{\partial x_1}(x_0,r)+\frac{\partial F_2}{\partial x_0}(x_1,r)\right),\vspace{.7em}\\
\displaystyle
\frac{\partial \mc{F}_0}{\partial x_3}(v)-\frac{\partial \mc{F}_1}{\partial x_2}(v)+\frac{\partial \mc{F}_2}{\partial x_1}(v)+\frac{\partial \mc{F}_3}{\partial x_0}(v)=\frac{x_3}{r}\left(\frac{\partial F_1}{\partial x_1}(x_0,r)+\frac{\partial F_2}{\partial x_0}(x_1,r)\right).
\end{array}
\end{equation}

It follows at once that systems \eqref{eq:system} and \eqref{eq:variant-CR} are equivalent on $E\setminus\{x_1x_2x_3=0\}$. By density, $F_1$ and $F_2$ are solutions of system \eqref{eq:variant-CR} on the whole $D\setminus\R$.

$(2)\Longrightarrow(1)$. Suppose that $F_2$ is of class $\mscr{C}^1$, $F_2$ is of class~$\mscr{C}^3$, and $F_1$ and $F_2$ are solutions of system \eqref{eq:variant-CR} on $D\setminus\R$. By \cite{Wh1943} and \eqref{eq:FF}, $\mc{F}=\Phi^{-1}(F)$ turns out to be of class $\mscr{C}^1$. Since systems \eqref{eq:system} and \eqref{eq:variant-CR} are equivalent on $E\setminus\{x_1x_2x_3=0\}$, by density, $\mc{F}$ is a solution of system \eqref{eq:system} on the whole $E$, i.e. $f$ is slice Fueter-regular.

$(2)\Longleftrightarrow(2\mr{a})$. 
Since the slice functions $\overline\partial f=\I\big(\frac{1}{2}\big(\frac{\partial F_1}{\partial x_0}-\frac{\partial F_2}{\partial x_1},\frac{\partial F_1}{\partial x_1}+\frac{\partial F_2}{\partial x_0}\big)\big)$ and $f'_s=\I\big(\big(\frac{F_2}{x_1},0\big)\big)$ are equal if and only if their inducing stem functions $\frac{1}{2}\big(\frac{\partial F_1}{\partial x_0}-\frac{\partial F_2}{\partial x_1},\frac{\partial F_1}{\partial x_1}+\frac{\partial F_2}{\partial x_0}\big)$ and $\big(\frac{F_2}{x_1},0\big)$ are, the equivalence between points $(2)$ and $(2\mr{a})$ is evident.

$(2)\Longleftrightarrow(2\mr{b})$. Suppose that $(2)$ holds. Then $(1)$ holds as well, and hence $F_1$ and $F_2$ are real analytic. In particular,  \cite{Wh1943} implies the existence of a real analytic function $G_2:D\to\oo$ such that $F_2(x_0,x_1)=x_1G_2(x_0,x_1)$ for each $(x_0,x_1)\in D$. Since $\frac{\partial F_2}{\partial x_0}=x_1\frac{\partial G_2}{\partial x_0}$ and $\frac{\partial F_2}{\partial x_1}=G_2+x_1\frac{\partial G_2}{\partial x_1}$, equations \eqref{eq:variant-CR1} are satisfied on $D\setminus\R$, and hence on the whole $D$ by density. This proves implication $(2)\Longrightarrow(2\mr{b})$. The inverse implication is an immediate consequence of the last two equalities; indeed, we have: $\frac{\partial G_2}{\partial x_0}=\frac{1}{x_1}\frac{\partial F_2}{\partial x_0}$ and $\frac{\partial G_2}{\partial x_1}=-\frac{1}{x_1^2}F_2+\frac{1}{x_1}\frac{\partial F_2}{\partial x_1}$ on $D\setminus\R$. 

$(2)\Longleftrightarrow(2\mr{c})$. This equivalence can be proven following the same argument used to prove the preceding one. If $(2)$ holds, then $F_1$ and $F_2$ are real analytic. Hence, by \cite{Wh1943}, there exist an open neighborhood $D^{**}$ of $D^*$ in $\R^2$ and real analytic functions $H_1,H_2:D^{**}\to\oo$ such that $F_1(x_0,x_1)=H_1(x_0,x_1^2)$, $F_2(x_0,x_1)=x_1H_2(x_0,x_1^2)$ for each $(x_0,x_1)\in D$. As a consequence, we have: $\frac{\partial F_1}{\partial x_0}(x_0,x_1)=\frac{\partial H_1}{\partial x_0}(x_0,x_1^2)$, $\frac{\partial F_1}{\partial x_1}(x_0,x_1)=2x_1\frac{\partial H_1}{\partial x_1}(x_0,x_1^2)$, $\frac{\partial F_2}{\partial x_0}(x_0,x_1)=x_1\frac{\partial H_2}{\partial x_0}(x_0,x_1^2)$ and $\frac{\partial F_2}{\partial x_1}(x_0,x_1)=H_2(x_0,x_1^2)+2x_1^2\frac{\partial H_2}{\partial x_1}(x_0,x_1^2)$. It follows that $H_1$ and $H_2$ satisfy \eqref{eq:variant-CR2} on $D^*\setminus\R$, and by density on the whole $D^*$. This proves $(2\mr{c})$. Finally, if $(2\mr{c})$ is satisfied, the latter four equalities imply at once that $F_1$ and $F_2$ satisfy \eqref{eq:variant-CR}.
\end{proof}

\begin{proof}[Proofs of assertions \eqref{eq:sfrfDforall} and \eqref{eq:sfrfDexists}]
Let $f:\Omega_D\to\oo$ be a $O(3)$-slice function induced by a $O(3)$-stem function $\mc{F}=(\mc{F}_0,\mc{F}_1,\mc{F}_2,\mc{F}_3)$ of class $\mscr{C}^1$ and let $(F_1,F_2):=\Phi^{-1}(\mc{F})$. Choose $\II=(I,J)\in\mc{N}$ and $v=(x_0,x_1,x_2,x_3)\in E$. Define $K:=IJ$, $x:=x_0+x_1I+x_2J+x_3K\in\Omega_\II\setminus\R$,  $r:=(x_1^2+x_2^2+x_3^2)^{1/2}>0$, $F_2:=F_2(x_0,r)$, $a:=\frac{\partial F_1}{\partial x_0}(x_0,r)$, $b:=\frac{\partial F_1}{\partial x_1}(x_0,r)$, $c:=\frac{\partial F_2}{\partial x_0}(x_0,r)$, $d:=\frac{\partial F_2}{\partial x_1}(x_0,r)$ and $e:=d-\frac{F_2(x_0,r)}{r}$.

Thanks to \eqref{eq:array}, it holds:
\begin{align*}
\DD_\II f(x)=\,&
\left(\frac{\partial}{\partial x_0}+I\frac{\partial}{\partial x_1}+J\frac{\partial}{\partial x_2}+K\frac{\partial}{\partial x_3}\right)\!\big(\mc{F}_0+I\mc{F}_1+J\mc{F}_2+K\mc{F}_3\big)(v)=\vspace{.7em}\\
=\,&
\left(\frac{\partial\mc{F}_0}{\partial x_0}+I\frac{\partial\mc{F}_1}{\partial x_0}+J\frac{\partial\mc{F}_2}{\partial x_0}+K\frac{\partial\mc{F}_3}{\partial x_0}\right)(v)+\\
&
+I\left(\left(\frac{\partial\mc{F}_0}{\partial x_1}+I\frac{\partial\mc{F}_1}{\partial x_1}+J\frac{\partial\mc{F}_2}{\partial x_1}+K\frac{\partial\mc{F}_3}{\partial x_1}\right)(v)\right)+\\
&
+J\left(\left(\frac{\partial\mc{F}_0}{\partial x_2}+I\frac{\partial\mc{F}_1}{\partial x_2}+J\frac{\partial\mc{F}_2}{\partial x_2}+K\frac{\partial\mc{F}_3}{\partial x_2}\right)(v)\right)+\\
&
+K\left(\left(\frac{\partial\mc{F}_0}{\partial x_3}+I\frac{\partial\mc{F}_1}{\partial x_3}+J\frac{\partial\mc{F}_2}{\partial x_3}+K\frac{\partial\mc{F}_3}{\partial x_3}\right)(v)\right)=\vspace{.7em}\\
=\,&
a+\frac{x_1I}{r}c+\frac{x_2J}{r}c+\frac{x_3K}{r}c+\\
&
+I\left(\frac{x_1}{r}b+I\left(\frac{r^2-x_1^2}{r^3}F_2+\frac{x_1^2}{r^2}d\right)+\frac{x_1x_2}{r^2}Je+\frac{x_1x_3}{r^2}Ke\right)+\\
&
+J\left(\frac{x_2}{r}b+\frac{x_1x_2}{r^2}Ie+J\left(\frac{r^2-x_2^2}{r^3}F_2+\frac{x_2^2}{r^2}d\right)+\frac{x_2x_3}{r^2}Ke\right)+\\
&
+K\left(\frac{x_3}{r}b+\frac{x_1x_3}{r^2}Ie+\frac{x_2x_3}{r^2}Je+K\left(\frac{r^2-x_3^2}{r^3}F_2+\frac{x_3^2}{r^2}d\right)\right)=\vspace{.7em}\\
=\,&
a+\frac{\mr{Im}(x)}{r}(c+b)-\left(\sum_{h=1}^3\frac{r^2-x_h^2}{r^3}\right)F_2-\left(\sum_{h=1}^3\frac{x_h^2}{r^2}\right)d+\\
&
+\frac{x_1x_2}{r^2}(IJ+JI)e+\frac{x_1x_3}{r^2}(IK+KI)e+\frac{x_2x_3}{r^2}(JK+KJ)e=\vspace{.7em}\\
=\,&
\left(a-d-\frac{2}{r}F_2\right)+\frac{\mr{Im}(x)}{r}(c+b).
\end{align*}
The preceding chain of equalities proves that
\begin{equation}\label{eq:ok}
\DD_\II f(x)=\left(\frac{\partial F_1}{\partial x_0}(x_0,r)-\frac{\partial F_2}{\partial x_1}(x_0,r)-\frac{2}{r}F_2(x_0,r)\right)+\frac{\mr{Im}(x)}{r}\left(\frac{\partial F_1}{\partial x_1}(x_0,r)+\frac{\partial F_2}{\partial x_0}(x_0,r)\right)
\end{equation}
for each $x\in\Omega_\II\setminus\R$, where $x_0=\mr{Re}(x)$ and $r=|\mr{Im}(x)|$.

By Theorem \ref{thm:2}, if $f$ is slice Fueter-regular, then $a-d-\frac{2}{r}F_2=0$ and $c+b=0$. This proves that $\DD_\II f=0$ on $\Omega_\II$ for each $\II\in\mc{N}$. Conversely, if $\DD_\II f=0$ on $\Omega_\II$ for at least one element $\II=(I,J)$ of $\mc{N}$, then \eqref{eq:ok} ensures that $0=\DD_\II f(x)=\big(a-d-\frac{2}{r}F_2\big)+\frac{\mr{Im}(x)}{|\mr{Im}(x)|}(c+b)$ for each $x\in\Omega_\II\setminus\R$. Since $-x\in\Omega_\II\setminus\R$ for each $x\in\Omega_\II\setminus\R$, we deduce that $a-d-\frac{2}{r}F_2=c+b=0$. Using \eqref{eq:array'}, it follows that \eqref{eq:system} holds on $E\setminus\R$, and hence on the whole $E$ by density.

The proof is complete.
\end{proof}

\begin{remark}\label{rem:ok}
Let $f:\Omega_D\to\oo$ be a $O(3)$-slice function induced by a $O(3)$-stem function $\mc{F}=(\mc{F}_0,\mc{F}_1,\mc{F}_2,\mc{F}_3)$ of class $\mscr{C}^1$. Choose $\II=(I,J)\in\mc{N}$. Combining equality \eqref{eq:ok} with equalities \eqref{eq:array'}, we deduce:
\begin{equation}\label{eq:array''}
\begin{array}{rl}
\DD_\II f(x)=&\textstyle\!\!\!
\left(\frac{\partial \mc{F}_0}{\partial x_0}(v)-\frac{\partial \mc{F}_1}{\partial x_1}(v)-\frac{\partial \mc{F}_2}{\partial x_2}(v)-\frac{\partial \mc{F}_3}{\partial x_3}(v)\right)+\vspace{.7em}\\
&+I\left(\frac{\partial \mc{F}_0}{\partial x_1}(v)+\frac{\partial \mc{F}_1}{\partial x_0}(v)-\frac{\partial \mc{F}_2}{\partial x_3}(v)+\frac{\partial \mc{F}_3}{\partial x_2}(v)\right)+\vspace{.7em}\\
\textstyle
&+J\left(\frac{\partial \mc{F}_0}{\partial x_2}(v)+\frac{\partial \mc{F}_1}{\partial x_3}(v)+\frac{\partial \mc{F}_2}{\partial x_0}(v)-\frac{\partial \mc{F}_3}{\partial x_1}(v)\right)+\vspace{.7em}\\
\textstyle
&+(IJ)\left(\frac{\partial \mc{F}_0}{\partial x_3}(v)-\frac{\partial \mc{F}_1}{\partial x_2}(v)+\frac{\partial \mc{F}_2}{\partial x_1}(v)+\frac{\partial \mc{F}_3}{\partial x_0}(v)\right)
\end{array}
\end{equation}
for each $x=x_0+x_1I+x_2J+x_3K\in\Omega_\II$, where $v:=(x_0,x_1,x_2,x_3)\in E$. Note that \eqref{eq:array''} coincides with equality (4.4) contained in the proof of Proposition 4.2 of \cite{JRS2019}. \bs
\end{remark}

\begin{proof}[{Proof of Corollary \ref{cor:3}}]
If $f$ is slice regular and slice Fueter-regular, then $0=\frac{\partial F_1}{\partial x_0}-\frac{\partial F_2}{\partial x_1}=\frac{2F_2}{x_1}$ so $F_2=0$, $\frac{\partial F_1}{\partial x_0}=0$ and $\frac{\partial F_1}{\partial x_1}=-\frac{\partial F_2}{\partial x_0}=0$ on $D$. By assumption~\eqref{assumption}, it turn out that $F_1=c$ on $D$ for some $c\in\oo$, and hence $f=c$ on $\Omega_D$. Evidently, each constant function is both slice Fueter-regular and slice regular.
\end{proof}

Recall that, given any slice function $f=\I((F_1,F_2)):\Omega_D\to\oo$, the spherical value of $f$ is the slice function $f_s^\circ:\Omega_D\to\oo$ defined by $f_s^\circ:=\I((F_1,0))$. Let $\Ii:\Omega_D\to\oo$ be the imaginary part function, i.e. $\Ii(x):=\mr{Im}(x)$. Note that $\I$ is the slice preserving function induced by the stem function $\R^2\to\oo^2$, $(x_0,x_1)\mapsto(0,x_1)$. It is immediate to verify that $f=f_s^\circ+\Ii \cdot f_s'=f_s^\circ+\Ii f_s'$.

\begin{proof}[{Proof of Corollary \ref{cor:4}}]
Let $f=\I((F_1,F_2))$ and $g=\I((G_1,G_2))$ be slice Fueter-regular functions. By Theorem \ref{thm:2}, we know that $\overline{\partial}f=f'_s$ and $\overline{\partial}g=g'_s$ on $\Omega_D\setminus\R$. By direct computations, we see that $\overline{\partial}(f\cdot g)=(\overline{\partial}f)\cdot g+f\cdot (\overline{\partial}g)$ and $(f\cdot g)'_s=f'_s\cdot g^\circ_s+f^\circ_s\cdot g'_s$. It follows that
\begin{align*}
\overline{\partial}(f\cdot g)-(f\cdot g)'_s&=f'_s\cdot (g-g_s^\circ)+(f-f_s^\circ)\cdot g'_s=f'_s\cdot (\Ii \cdot g_s')+(\Ii \cdot f_s') \cdot g'_s=2\,\Ii (f_s'g_s').
\end{align*}
Hence $\overline{\partial}(f\cdot g)-(f\cdot g)'_s=0$ if and only if $f'_sg'_s=0$ on $\Omega_D\setminus\R$. By Theorem \ref{thm:real-analyticity}, $f$ is real analytic. Since $f_s'(x)=\mr{Im}(x)^{-1}(f(x)-f(\overline{x}))$, it follows that $f_s'$ is real analytic as well. The same is true for $g_s'$. By assumption \eqref{assumption}, $\Omega_D\setminus\R$ is connected, so the condition `$f'_sg'_s=0$ on $\Omega_D\setminus\R$' is equivalent to require that either $f_s'=0$ or $g_s'=0$ on $\Omega_D\setminus\R$. This is in turn equivalent to the condition `either $F_2=0$ or $G_2=0$ on $D$'. Note that, if $F_2=0$ on $D$, equations \eqref{eq:variant-CR} imply that $F_1$ is constant, and hence $f$ is constant as well. Similarly, if $G_2=0$ on $D$, then $g$ is constant. This completes the proof.
\end{proof}


\subsection*{Proofs of Section \ref{subsec:sfo}}

\begin{proof}[Proof of Lemma \ref{lem:moufang}]
Let $m,n\in\{1,\ldots,7\}$ with $m<n$. Since $f_s^\circ$ and $f_s'$ are constant on each $\sph_x$, we have that $L_{mn}(f_s^\circ)=L_{mn}(f_s')=0$ on the whole $\Omega_D\setminus\R$. Since $f(x)=f_s^\circ(x)+\mr{Im}(x)f_s'(x)$ for each $x\in\Omega_D\setminus\R$, it follows that
\begin{align*}
L_{mn}(f)(x)&=L_{mn}(f_s^\circ)(x)+L_{mn}(\mr{Im}(x))f_s'(x)+\mr{Im}(x)L_{mn}(f_s')(x)=\\
&=L_{mn}(\mr{Im}(x))f_s'(x)=(x_me_n-x_ne_m)f_s'(x).
\end{align*}

Let $x\in\Omega_D\setminus\R$ and let $a:=f_s'(x)$. Thanks to the first Moufang identity \cite[(3.4), p. 28]{Sch1966}, we have: $e_m(e_n(e_ma))=(e_me_ne_m)a$. Since $e_me_ne_m=-e_me_me_n=e_n$ for $m<n$, we deduce that $e_m(e_n(e_ma))=e_na$ for $m<n$ and hence, by Artin's theorem,
\begin{align*}
\Gamma(f)(x)&=-\sum_{m<n}e_m(e_nL_{mn}(f)(x))=-\sum_{m<n}e_m(e_n((x_me_n-x_ne_m)a))=\\
&=-\sum_{m<n}e_m(e_n(x_me_na-x_ne_ma))=-\sum_{m<n}e_m(-x_ma-x_ne_n(e_ma))=\\
&=-\sum_{m<n}(-x_me_ma-x_ne_m(e_n(e_ma)))=\sum_{m<n}(x_me_ma+x_ne_na)=\\
&=\left(\sum_{m<n}(x_me_m+x_ne_n)\right)a=\frac{1}{2}\left(\sum_{m,n=1}^7(x_me_m+x_ne_n)-\sum_{m=1}^7(x_me_m+x_me_m)\right)=\\
&=\frac{1}{2}\left(7\sum_{m=1}^7x_me_m+7\sum_{n=1}^7x_ne_n-2\sum_{m=1}^7x_me_m\right)=6\mr{Im}(x)a,
\end{align*}
as desired.
\end{proof}

\begin{lem}\label{lem:restriction}
A function $f:\Omega_D\to\oo$ is slice if and only if the restriction $f|_{\Omega_D\setminus\R}$ is. 
\end{lem}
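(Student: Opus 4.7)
The forward direction is immediate: if $f=\I(F)$ with $F=(F_1,F_2):D\to\oo^2$ a stem function, then the restriction $F|_{D\setminus\R}$ is again a stem function, and it clearly induces $f|_{\Omega_D\setminus\R}$. So I would concentrate on the converse.

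For the converse, I would split into two cases based on the dichotomy provided by assumption \eqref{assumption}. If $\Omega_D\cap\R=\emptyset$ (product-domain case), then $\Omega_D\setminus\R=\Omega_D$ and there is nothing to prove. So assume $\Omega_D$ is a slice domain, so that $D\cap\R\neq\emptyset$. Let $G=(G_1,G_2):D\setminus\R\to\oo^2$ be the (unique) stem function inducing $f|_{\Omega_D\setminus\R}$. I would then define $F=(F_1,F_2):D\to\oo^2$ by setting $F|_{D\setminus\R}:=G$ and, for each $(\alpha,0)\in D\cap\R$,
\[
F_1(\alpha,0):=f(\alpha),\qquad F_2(\alpha,0):=0.
\]

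It remains to check two things. First, $F$ is a stem function, i.e.\ it satisfies \eqref{eq:stem}: for $\beta\neq 0$ this is inherited from $G$, and for $\beta=0$ both identities are trivial (the second because $F_2(\alpha,0)=0=-F_2(\alpha,0)$). Second, $f=\I(F)$ on $\Omega_D$: for points of $\Omega_D\setminus\R$ this is how $G$ was chosen, while for a real point $\alpha\in D\cap\R$, given any $I\in\sph$ the well-posedness of $\I(F)$ yields
\[
\I(F)(\alpha)=F_1(\alpha,0)+I\,F_2(\alpha,0)=f(\alpha)+I\cdot 0=f(\alpha),
\]
as required.

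There is no real obstacle here: the argument is essentially a bookkeeping extension of the stem function across $D\cap\R$, exploiting the fact that the second component of any stem function must vanish on the real axis. The only subtle point is invoking assumption \eqref{assumption} to guarantee that the domain is either a slice domain or a product domain, so that the two-case analysis is exhaustive; the uniqueness of $G$ (recalled in Subsection \ref{subsec:GS}) is used implicitly to make the construction unambiguous, but is not strictly necessary for the existence statement.
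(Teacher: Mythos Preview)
Your proof is correct and follows essentially the same route as the paper's: both extend the stem function $G$ on $D\setminus\R$ to $F$ on $D$ by setting $F_1(\alpha,0)=f(\alpha)$ and $F_2(\alpha,0)=0$, then observe that this is a stem function inducing $f$. The paper is slightly terser (it simply assumes $\Omega_D\cap\R\neq\emptyset$ at the outset rather than invoking the slice/product dichotomy), but the content is the same.
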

\begin{proof}
We can assume $\Omega_D\cap\R=D\cap\R\neq\emptyset$. If $(F_1,F_2):D\to\oo^2$ is a stem function inducing $f:\Omega_D\to\oo$, then the restriction $(F_1,F_2)|_{D\setminus\R}$ is a stem function inducing $f|_{\Omega_D\setminus\R}$. Conversely, if $f:\Omega_D\to\oo$ is a function such that $f|_{\Omega_D\setminus\R}$ is a slice function, induced by a certain stem function $(G_1,G_2):D\setminus\R\to\oo^2$, then the function $(F_1,F_2):D\to\oo^2$, defined by $(F_1,F_2)=(G_1,G_2)$ on $D\setminus\R$, $F_1=f$ on $D\cap\R$ and $F_2=0$ on $D\cap\R$, is a stem function such that $\I((F_1,F_2))=f$.
\end{proof}

\begin{proof}[Proof of Lemma \ref{lem:dsc}]
By Lemma \ref{lem:restriction}, we can assume that $\Omega_D\cap\R=D\cap\R=\emptyset$, i.e. $\Omega_D$ is a product domain. If $f$ is slice then Lemma \ref{lem:moufang} implies that $\frac{1}{6}\mr{Im}^{-1}\Gamma(f)=f_s'$, so $f-\frac{1}{6}\Gamma(f)=f_s^\circ$. It follows that the functions $\mr{Im}^{-1}\Gamma(f)$ and $f-\frac{1}{6}\Gamma(f)$ are constant on the sphere $\sph_x$ for each $x\in\Omega_D\setminus\R$. Hence \eqref{eq:dsc} is verified.

Suppose now that \eqref{eq:dsc} is verified. Let $h\in\mscr{C}^1(\Omega_D,\oo)$ and $g\in\mscr{C}^1(\Omega_D,\oo)$ be the functions defined by $h:=\frac{1}{6}\mr{Im}^{-1}\Gamma(f)$ and $g:=f-\frac{1}{6}\Gamma(f)$. Note that $f(x)=g(x)+\mr{Im}(x)h(x)$ for each $x\in\Omega_D$. Thanks to \eqref{eq:dsc}, we know that $g$ and $h$ are constant on each spheres $\sph_x$ for $x\in\Omega_D$. Define the stem function $F=(F_1,F_2):D\to\oo$ by setting $F_1(x_0,x_1):=g(x_0+x_1i)$ and $F_2(x_0,x_1):=x_1h(x_0+x_1i)$. It remains to show that $\I(F)=f$. 
Let $x=x_0+x_1I\in\Omega_D$ for some $x_0,x_1\in\R$ and $I\in\sph$. Since $g$ and $h$ are constant on $\sph_x$, we have:
\begin{align*}
\I(F)(x)&=F_1(x_0,x_1)+IF_2(x_0,x_1)=\\
&=g(x_0+x_1i)+Ix_1h(x_0+x_1i)=\\
&=g(x_0+x_1I)+x_1Ih(x_0+x_1I)=\\
&=g(x)+\mr{Im}(x)h(x)=f(x).
\end{align*}
The proof is complete.
\end{proof}

\begin{proof}[Proof of Theorem \ref{thm:sfo}]
First, suppose that $f$ is slice Fueter-regular. By implication $(1)\Longrightarrow(2\mr{a})$ of Theorem \ref{thm:2}, we know that $\overline{\partial}f-f_s'=0$ on $\Omega_D\setminus\R$. Theorem 2.2(ii) of \cite{GP2014-global} and Lemma \ref{lem:moufang} imply that $\overline{\partial}f=\overline{\vartheta}(f)$ and $f_s'=\frac{1}{6}\mr{Im}^{-1}\Gamma(f)$ on $\Omega_D\setminus\R$, respectively. Bearing in mind \eqref{eq:thetaf'}, it follows that $\overline{\vartheta}_F(f)=0$ on $\Omega_D\setminus\R$. Since $f$ is of class $\mscr{C}^2$, thanks to implication $(1)\Longrightarrow(2)$ of Lemma \ref{lem:dsc}, we also have that $f$ satisfies \eqref{eq:dsc} on $\Omega_D\setminus\R$. This proves implications $(1)\Longrightarrow(2)$ and $(2)\Longrightarrow(3)$ of the present theorem.

Finally, suppose that $(3)$ holds, i.e. $f$ satisfies \eqref{eq:dsc} and $\overline{\vartheta}_F(f)=0$ on $\Omega_D\setminus\R$. By implication $(2)\Longrightarrow(1)$ of Lemma \ref{lem:dsc}, we know that $f$ is slice. In this way, we can apply again Theorem 2.2(ii) of \cite{GP2014-global} and Lemma \ref{lem:moufang}. Combining these results with \eqref{eq:thetaf'} once again, we deduce: $\overline{\partial}f-f_s'=\overline{\vartheta}_F(f)=0$ on $\Omega_D\setminus\R$. Since $f$ is assumed to be of class $\mscr{C}^3$, implication $(2\mr{a})\Longrightarrow(1)$ of Theorem~\ref{thm:2} ensures that $(1)$ holds, i.e. $f$ is slice Fueter-regular. The proof is complete.
\end{proof}


\subsection*{Proofs of Section \ref{subsec:cauchy}}

\begin{proof}[Proof of Lemma \ref{lem:NgIxi}]
Since $\overline{\xi_I}\in\sph_{\xi_I}$, we have $\Delta_{\overline{\xi_I}}(x)=\Delta_{\xi_I}(x)$ and hence  $\Delta_{\overline{\xi_I}}(\overline{x})=\overline{\Delta_{\xi_I}(x)}$. Bearing in mind the latter equality, it holds:
\begin{align*}
N(g_{I,\xi})(x)=\,&((\xi_I-x)\cdot(\overline{\xi_I}-\overline{x})+|\xi_I^\perp|^2)\cdot((\overline{\xi_I}-x)\cdot(\xi_I-\overline{x})+|\xi_I^\perp|^2)=\\
=\,&N(\xi_I-x)N(\overline{\xi_I}-\overline{x})+|\xi_I^\perp|^2(|\xi_I|^2-\overline{x}\xi_I-x\overline{\xi_I}+|x|^2)+\\
&+|\xi_I^\perp|^2(|\xi_I|^2-\overline{x}\overline{\xi_I}-x\xi_I+|x|^2)+|\xi_I^\perp|^4=\\
=\,&\Delta_{\xi_I}(x)\Delta_{\overline{\xi_I}}(\overline{x})+2|\xi_I^\perp|^2(|x|^2-x\mr{Re}(\xi_I)-\overline{x}\mr{Re}(\xi_I)+|\xi_I|^2)+|\xi_I^\perp|^4=\\
=\,&|\Delta_{\xi_I}(x)|^2+2|\xi_I^\perp|^2\big(|x|^2-2\mr{Re}(x)\mr{Re}(\xi_I)+|\xi_I|^2\big)+|\xi_I^\perp|^4.
\end{align*}
Note that
\[
|x|^2-2\mr{Re}(x)\mr{Re}(\xi_I)+|\xi_I|^2=|\mr{Im}(x)|^2+(\mr{Re}(x)-\mr{Re}(\xi_I))^2+|\mr{Im}(\xi_I)|^2\geq0.
\]
Hence $N(g_{I,\xi})(x)\geq0$ for each $x\in\oo$, and $N(g_{I,\xi})(x)=0$ if and only if $\xi_I^\perp=0$ (i.e. $\xi\in\C_I$, so $\xi=\xi_I$) and $x\in\sph_\xi$. The proof is complete.
\end{proof}

Let us recall the representation formula for octonionic slice functions, see \cite[Proposition~6]{GP2011} for a proof.

\begin{lem}\label{lem:grf}
Let $f:\Omega_D\to\oo$ be a slice function, and let $I,J\in\sph$ such that $I\neq J$. For each $(\alpha,\beta)\in D$ and $K\in\sph$, it holds:
\[
f(\alpha+K\beta)=(K-J)((I-J)^{-1}f(\alpha+I\beta))-(K-I)((I-J)^{-1}f(\alpha+J\beta)).
\]
\end{lem}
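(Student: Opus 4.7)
The plan is to exploit the stem function representation of $f$ and solve the resulting $2\times 2$ linear system for $F_1,F_2$, paying attention to the non-associativity of $\oo$ by invoking only those associativity/alternativity facts that can be phrased in terms of two generators. Since every subalgebra of $\oo$ generated by two elements is associative (Artin's theorem) and $\oo$ is alternative, the key identity we will use is $c(c^{-1}a)=a$ for $c\in\oo\setminus\{0\}$ and $a\in\oo$, together with left distributivity.

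First I would let $(F_1,F_2):D\to\oo^2$ be the stem function with $\I(F_1,F_2)=f$, so that for every $L\in\sph$ and $(\alpha,\beta)\in D$ one has $f(\alpha+L\beta)=F_1(\alpha,\beta)+LF_2(\alpha,\beta)$. Evaluating at $L=I$ and $L=J$ yields
\[
f(\alpha+I\beta)=F_1+IF_2, \qquad f(\alpha+J\beta)=F_1+JF_2,
\]
and subtracting gives $(I-J)F_2=f(\alpha+I\beta)-f(\alpha+J\beta)$. Because $I\neq J$ and $\oo$ is a division algebra, $I-J$ is invertible; applying $(I-J)^{-1}$ on the left and using the alternative law $(I-J)^{-1}((I-J)F_2)=F_2$, we solve
\[
F_2=(I-J)^{-1}f(\alpha+I\beta)-(I-J)^{-1}f(\alpha+J\beta),
\]
where the separation into two terms is just left distributivity of the octonionic product.

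Next I would substitute this expression for $F_2$ into $f(\alpha+K\beta)=F_1+KF_2=f(\alpha+I\beta)+(K-I)F_2$, obtaining
\[
f(\alpha+K\beta)=f(\alpha+I\beta)+(K-I)\bigl((I-J)^{-1}f(\alpha+I\beta)\bigr)-(K-I)\bigl((I-J)^{-1}f(\alpha+J\beta)\bigr).
\]
To reach the stated form, it remains to verify the identity
\[
f(\alpha+I\beta)+(K-I)\bigl((I-J)^{-1}f(\alpha+I\beta)\bigr)=(K-J)\bigl((I-J)^{-1}f(\alpha+I\beta)\bigr).
\]
Writing $a:=f(\alpha+I\beta)$ and $b:=(I-J)^{-1}a$, this reduces to $a=(K-J)b-(K-I)b$, which by left distributivity equals $((K-J)-(K-I))b=(I-J)b=(I-J)((I-J)^{-1}a)=a$, once again by the alternative identity applied to the two elements $I-J$ and $a$.

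The only non-routine point is bookkeeping around the parentheses: at every step where we re-associate, either we are using left distributivity (which is axiomatic) or we are re-associating a product involving at most two algebraically independent octonions (so that Artin's theorem, or directly the alternative laws, applies). In particular the steps $(I-J)^{-1}((I-J)F_2)=F_2$ and $(I-J)((I-J)^{-1}a)=a$ involve only the subalgebra generated by $I-J$ and one additional octonion, and are therefore valid in~$\oo$. Once this is checked, the representation formula follows directly from the computation above.
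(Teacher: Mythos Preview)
Your argument is correct. The paper does not supply its own proof of this lemma; it simply cites \cite[Proposition~6]{GP2011}. Your derivation is precisely the standard one behind that reference: write $f(\alpha+L\beta)=F_1+LF_2$ for $L\in\{I,J,K\}$, subtract to isolate $F_2$, and substitute back, with Artin's theorem justifying the two cancellation steps $(I-J)^{-1}((I-J)F_2)=F_2$ and $(I-J)((I-J)^{-1}a)=a$. The parenthesis bookkeeping you flag is exactly the point of the octonionic version, and you handle it correctly (the inverse $(I-J)^{-1}=\overline{(I-J)}/|I-J|^2$ lies in the real span of $1$ and $I-J$, hence in any subalgebra containing $I-J$).
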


For each $I\in\sph$, we define $\C_I^+:=\{\alpha+I\beta\in\oo\,:\,\alpha,\beta\in\R,\beta\geq0\}$.

An immediate consequence of Lemma \ref{lem:grf} is as follows:

\begin{cor}[Identity principle]\label{cor:ip}
Let $f,g:\Omega_D\to\oo$ be two slice functions and let $I,J\in\sph$ such that $I\neq J$ and $f=g$ on $\Omega_D\cap(\C_I^+\cup\C_J^+)$. Then $f=g$ on the whole $\Omega_D$.

In particular, $f=g$ on the whole $\Omega_D$ if $f$ and $g$ coincide on $\Omega_D\cap\C_I$.
\end{cor}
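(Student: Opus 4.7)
The plan is to deduce the identity principle as a direct application of the representation formula (Lemma \ref{lem:grf}) to the slice function $h:=f-g$. First I would observe that $\mc{S}(\Omega_D,\oo)$ is closed under subtraction (this was noted in Subsection \ref{subsec:GS}, where it is explained that $\mc{S}(\Omega_D,\oo)$ is a real alternative algebra under the slice product, and in particular a real vector space). Hence $h=f-g$ is a slice function on $\Omega_D$, and by hypothesis $h$ vanishes on $\Omega_D\cap(\C_I^+\cup\C_J^+)$.

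Next I would fix an arbitrary point $x\in\Omega_D$ and write $x=\alpha+K\beta$ for some $(\alpha,\beta)\in D$ and $K\in\sph$. Replacing $(K,\beta)$ by $(-K,-\beta)$ if necessary (which is legitimate because $-K\in\sph$ and $D$ is invariant under complex conjugation), I may assume $\beta\geq0$. Then $\alpha+I\beta\in\Omega_D\cap\C_I^+$ and $\alpha+J\beta\in\Omega_D\cap\C_J^+$, so $h(\alpha+I\beta)=h(\alpha+J\beta)=0$. Applying Lemma \ref{lem:grf} to $h$ yields
\[
h(x)=(K-J)((I-J)^{-1}h(\alpha+I\beta))-(K-I)((I-J)^{-1}h(\alpha+J\beta))=0,
\]
so $h\equiv0$ on $\Omega_D$, i.e. $f=g$.

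For the "in particular" part, I would set $J:=-I$. Since $I^2=-1\neq1$, we have $2I\neq0$, hence $J\in\sph$ with $I\neq J$. Moreover $\C_J=\C_I$ as real subalgebras, and $\C_I^+\cup\C_{-I}^+=\C_I$; consequently $\Omega_D\cap(\C_I^+\cup\C_J^+)=\Omega_D\cap\C_I$. The assumption $f=g$ on $\Omega_D\cap\C_I$ is therefore exactly the hypothesis of the first assertion with this choice of $J$, and we conclude $f=g$ on $\Omega_D$. There is no genuine obstacle here: the entire content is the representation formula, and the only small point worth spelling out is the reduction to $\beta\geq0$ and the valid choice $J=-I$ in the final step.
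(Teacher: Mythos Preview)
Your proof is correct and follows exactly the route the paper intends: the corollary is stated as an immediate consequence of Lemma~\ref{lem:grf}, and your argument spells out precisely this deduction, including the reduction to $\beta\geq0$ and the choice $J=-I$ for the final clause.
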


The slice Cauchy integrands $S_{f,\II}$ and $V_{f,\II}$ can be characterized as follows.

\begin{lem}\label{lem:char-Cauchy}
The slice Cauchy surface $(f,\II)$-integrand $S_{f,\II}$ is the unique function $\mc{S}:\partial^*\Omega_{\II}\to\oo$ satisfying the following condition: for each $\xi\in\partial\Omega_{\II}$, the restriction function $\mc{S}|_\xi:\oo\setminus\sph_{I,\xi}\to\oo$, defined by $\mc{S}|_\xi(x):=\mc{S}(x,\xi)$, is a slice function such that
\begin{equation}\label{eq:S|}
\mc{S}|_\xi(y)=\frac{1}{2\pi^2}\frac{\overline{\xi-y}}{|\xi-y|^4}(\n_{\II}(\xi)f(\xi)) \quad \text{ for each $y\in\C_I\setminus\sph_{I,\xi}$.}
\end{equation}

Similarly, the slice Cauchy volume $(f,\II)$-integrand $V_{f,\II}$ is the unique function $\mc{V}:\Omega_\II^*\to\oo$ satisfying the following condition: for each $\xi\in\Omega_\II$, the restriction function $\mc{V}|_\xi:\oo\setminus\sph_{I,\xi}\to\oo$, defined by $\mc{V}|_\xi(x):=\mc{V}(x,\xi)$, is a slice function such that
\begin{equation}\label{eq:V|}
\mc{V}|_\xi(y)=\frac{1}{2\pi^2}\frac{\overline{\xi-y}}{|\xi-y|^4}(\DD_{\II}f(\xi)) \quad \text{ for each $y\in\C_I\setminus\sph_{I,\xi}$.}
\end{equation}
\end{lem}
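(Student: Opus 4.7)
The plan is the same for both $S_{f,\II}$ and $V_{f,\II}$, and so I focus on the case of $\mc{S}=S_{f,\II}$; the case of $\mc{V}=V_{f,\II}$ is obtained by replacing $\n_{\II}(\xi)f(\xi)$ with $\DD_{\II}f(\xi)$ at every step. For each fixed $\xi\in\partial\Omega_{\II}$ two things have to be checked: (i) $S_{f,\II,\xi}$ as defined in \eqref{eq:SfIxi} is a slice function on $\oo\setminus\sph_{I,\xi}$ satisfying \eqref{eq:S|}, and (ii) no other slice function on $\oo\setminus\sph_{I,\xi}$ has the same restriction to $\C_I\setminus\sph_{I,\xi}$.

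Uniqueness (ii) is immediate from the identity principle (Corollary~\ref{cor:ip}): $\oo\setminus\sph_{I,\xi}$ is the circularization of an open $O(3)$-invariant set in $\R^2$ and satisfies assumption~\eqref{assumption}, being either all of $\oo$ (when $\xi\notin\C_I$) or a slice domain (the complement of the sphere $\sph_\xi$ in $\oo$, when $\xi\in\C_I$); so any two slice functions on $\oo\setminus\sph_{I,\xi}$ which agree on $\C_I\setminus\sph_{I,\xi}$ coincide everywhere.

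For (i), sliceness of $S_{f,\II,\xi}$ is built into the definition: by Lemma~\ref{lem:NgIxi} the function $(g_{I,\xi}^{-\sss\bullet})^{\sss\bullet 2}$ is slice on $\oo\setminus\sph_{I,\xi}=\oo\setminus V(N(g_{I,\xi}))$, and $x\mapsto(\overline\xi-\overline x)(\n_{\II}(\xi)f(\xi))$ equals the slice product $(\overline\xi-\overline x)\cdot(\n_{\II}(\xi)f(\xi))$, as recalled in the paragraph after \eqref{eq:SfIxi}; so $S_{f,\II,\xi}$ is a slice product of slice functions. To verify \eqref{eq:S|} on $\C_I\setminus\sph_{I,\xi}$ I use Lemma~\ref{lem:MN}, which rewrites $S_{f,\II}(y,\xi)=M_I(y,\xi,a)/(2\pi^2\,N_I(y,\xi)^2)$ with $a:=\n_{\II}(\xi)f(\xi)$. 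Fix $y\in\C_I\setminus\sph_{I,\xi}$: since $y,\xi_I,\overline y,\overline{\xi_I}$ all lie in $\C_I$, the subalgebra of $\oo$ generated by them together with $a$ is contained in the (associative, by Artin's theorem) subalgebra generated by $I$ and $a$, so every nested product in the defining formula for $Q_I(y,\xi,a)$ can be freely reassociated. A direct simplification collapses the expression to $Q_I(y,\xi,a)=\tau(y)^2\,a$ with $\tau(y):=|y|^2+|\xi|^2-2\mr{Re}(y\xi_I)=|y-\overline\xi|^2$, while the elementary identity $|\Delta_{\xi_I}(y)|^2=|y-\xi_I|^2|y-\overline{\xi_I}|^2$ reduces $N_I(y,\xi)$ to $\rho(y)\tau(y)$ with $\rho(y):=|\xi-y|^2$. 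Substituting these into $M_I(y,\xi,a)=Q_I(y,\xi,\overline\xi a)-\overline y\,Q_I(y,\xi,a)$ yields $M_I(y,\xi,a)=\tau(y)^2(\overline\xi-\overline y)a$, and hence
\[
S_{f,\II}(y,\xi)=\frac{\tau(y)^2(\overline\xi-\overline y)\,a}{2\pi^2\,\rho(y)^2\,\tau(y)^2}=\frac{1}{2\pi^2}\,\frac{\overline{\xi-y}}{|\xi-y|^4}\,\n_{\II}(\xi)f(\xi),
\]
which is exactly \eqref{eq:S|}.

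The main obstacle is the algebraic simplification giving $Q_I(y,\xi,a)=\tau(y)^2\,a$: non-associativity in $\oo$ forces a careful use of Artin's theorem, but the saving point is that all products appearing in $Q_I$ on $\C_I$ take place inside a subalgebra of $\oo$ generated by only two octonions, $I$ and $a$, inside which everything associates.
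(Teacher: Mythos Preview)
Your argument is circular. You invoke Lemma~\ref{lem:MN} to rewrite $S_{f,\II}(y,\xi)$ as $M_I(y,\xi,a)/(2\pi^2 N_I(y,\xi)^2)$, but in the paper's logical structure Lemma~\ref{lem:MN} is proved \emph{using} Lemma~\ref{lem:char-Cauchy}: its proof says that since $x\mapsto M_I(x,\xi,a)/N_I(x,\xi)^2$ is slice, by Corollary~\ref{cor:ip} and Lemma~\ref{lem:char-Cauchy} it suffices to check the formula on $\C_I$. So you cannot cite the conclusion \eqref{eq:SfII} of Lemma~\ref{lem:MN} here. The $\C_I$-computations you carry out for $Q_I$, $M_I$, $N_I$ are correct (and are in fact the content of the paper's proof of Lemma~\ref{lem:MN}), but they do not by themselves establish that $S_{f,\II,\xi}(y)$ equals the right-hand side; you still need the link between $S_{f,\II,\xi}$ and $M_I/N_I^2$, which is exactly what is not yet available.

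The paper's proof bypasses this by working directly from the definition \eqref{eq:SfIxi}. The key observation is that $x\mapsto \xi_I-x$, $x\mapsto\overline{\xi_I}-\overline{x}$ and hence $g_{I,\xi}$ are $\C_I$-preserving, so for $y\in\C_I$ all the slice products in \eqref{eq:SfIxi} reduce to pointwise products. One then computes $g_{I,\xi}(y)=|\xi-y|^2$, $g^c_{I,\xi}(y)=|\overline\xi-y|^2$, $N(g_{I,\xi})(y)=|\xi-y|^2|\overline\xi-y|^2$, and the slice product collapses to $|\xi-y|^{-4}(\overline\xi a-\overline y a)$. Your uniqueness argument via Corollary~\ref{cor:ip} is fine; the fix is to replace the appeal to Lemma~\ref{lem:MN} with this direct computation from the $\C_I$-preserving property of $g_{I,\xi}$.
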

\begin{proof}
Let us prove the first part of the lemma. The proof of the second is similar. During this proof, we will use `$x$' as a variable. Let $\xi\in\partial\Omega_{\II}$ and let $a:=\n_{\II}(\xi)f(\xi)$. By Corollary \ref{cor:ip}, it suffices to show that
\begin{equation}\label{eq:39'}
S_{f,\II}(y,\xi)=\frac{1}{2\pi^2}\,\frac{\overline{\xi}a-\overline{y}a}{|\xi-y|^4} \quad \text{ for each $y\in\C_I\setminus\sph_{I,\xi}$.}
\end{equation}

Choose any $y\in\C_I\setminus\sph_{I,\xi}$. Since the slice functions $x\mapsto\xi_I-x$, $x\mapsto\overline{\xi_I}-\overline{x}$ and hence $g_{I,\xi}$ are $\C_I$-preserving, it follows that
\begin{align*}
g_{I,\xi}(y)&=(\xi_I-y)(\overline{\xi_I}-\overline{y})+|\xi_I^\perp|^2=|\xi_I-y|^2+|\xi_I^\perp|^2=|\xi-y|^2,\\
g^c_{I,\xi}(y)&=|\xi_I-\overline{y}|^2+|\xi_I^\perp|^2=|\xi-\overline{y}|^2=|\overline{\xi}-y|^2,\\
N(g_{I,\xi})(y)&=|\xi-y|^2|\overline{\xi}-y|^2
\end{align*}
and hence
\begin{align*}
&((N(g_{I,\xi})^{-2}\cdot(g^c_{I,\xi})^{{\sss\bullet\,}2})(y)=(|\xi-y|^2|\overline{\xi}-y|^2)^{-2}|\overline{\xi}-y|^4=|\xi-y|^{-4},\\
&((N(g_{I,\xi})^{-2}\cdot(g^c_{I,\xi})^{{\sss\bullet\,}2}\cdot (\overline{\xi}a))(y)=|\xi-y|^{-4}\overline{\xi}a,\\
&((N(g_{I,\xi})^{-2}\cdot(g^c_{I,\xi})^{{\sss\bullet\,}2}\cdot a)(y)=|\xi-y|^{-4}a.
\end{align*}
Note that
\[
N(g_{I,\xi})^{-2}\cdot(g^c_{I,\xi})^{{\sss\bullet\,}2}\cdot (\overline{x}a)=N(g_{I,\xi})^{-2}\cdot(g^c_{I,\xi})^{{\sss\bullet\,}2}\cdot (\overline{x}\cdot a)=\overline{x}((N(g_{I,\xi})^{-2}\cdot(g^c_{I,\xi})^{{\sss\bullet\,}2}\cdot a).
\]

Summarizing, it hold:
\begin{align*}
&((N(g_{I,\xi})^{-2}\cdot(g^c_{I,\xi})^{{\sss\bullet\,}2}\cdot (\overline{\xi}a))(y)=|\xi-y|^{-4}\overline{\xi}a,\\
&(N(g_{I,\xi})^{-2}\cdot(g^c_{I,\xi})^{{\sss\bullet\,}2}\cdot (\overline{x}a))(y)=\overline{y}|\xi-y|^{-4}a=|\xi-y|^{-4}\overline{y}a.
\end{align*}

The latter two equalities imply at once that
\begin{align*}
2\pi^2S_{f,\II}(y,\xi)&=\big((N(g_{I,\xi})(x))^{-2}\cdot(g^c_{I,\xi}(x))^{{\sss\bullet\,}2}\cdot(\overline{\xi}a-\overline{x}a)\big)(y)=\\
&=|\xi-y|^{-4}\overline{\xi}a-|\xi-y|^{-4}\overline{y}a=|\xi-y|^{-4}(\overline{\xi}a-\overline{y}a),
\end{align*}
which proves \eqref{eq:39'}.
\end{proof}

\begin{proof}[Proof of Lemma \ref{lem:MN}]
As above, we will only prove the first part, the proof of the second being similar. Let $\xi\in\partial\Omega_{\II}$, let $a:=\n_{\II}(\xi)f(\xi)$ and let $y\in\C_I\setminus\sph_{I,\xi}$. Since the function $x\mapsto\frac{M_I(x,\xi,a)}{2\pi^2N_I(x,\xi)^2}$ is slice, by Corollary \ref{cor:ip} and Lemma \ref{lem:char-Cauchy}, it suffices to show that $\frac{M_I(y,\xi,a)}{N_I(y,\xi)^2}=\frac{\overline{\xi}a-\overline{y}a}{|\xi-y|^4}$. 
Bearing in mind Artin's theorem and the fact that $y$ and $\xi_I$ belong to $\C_I$, we obtain:
\begin{align*}
N_I(y,\xi)=&\,|(y-\xi_I)(y-\overline{\xi_I})|^2+|\xi_I^\perp|^2(2|y|^2-2\mr{Re}(y)2\mr{Re}(\xi_I)+2|\xi_I|^2)+|\xi_I^\perp|^4=\\
=&\,|y-\xi_I|^2|y-\overline{\xi_I}|^2+|\xi_I^\perp|^2(|y|^2-y\overline{\xi_I}-\overline{y}\xi_I+|\xi_I|^2+\\
&+|y|^2-y\xi_I-\overline{y}\overline{\xi_I}+|\xi_I|^2)+|\xi_I^\perp|^4=\\
=&\,|y-\xi_I|^2|y-\overline{\xi_I}|^2+|\xi_I^\perp|^2(|y-\xi_I|^2+|y-\overline{\xi_I}|^2)+|\xi_I^\perp|^4=\\
=&\,(|y-\xi_I|^2+|\xi_I^\perp|^2)(|y-\overline{\xi_I}|^2+|\xi_I^\perp|^2)=|y-\xi|^2|y-\overline{\xi}|^2,\\
Q_I(y,\xi,a)=&\,Q_I(y,\xi,1)a,\\
Q_I(y,\xi,1)=&\,(|y|^2+|\xi|^2)^2-2(|y|^2+|\xi|^2)(y\xi_I+\overline{y}\overline{\xi_I})+y^2\xi_I^2+\overline{y}^2\overline{\xi_I}^2+2|y|^2|\xi_I|^2=\\
=&\,(|y|^2+|\xi|^2-y\xi_I-\overline{y}\overline{\xi_I})^2=(|y|^2+|\xi_I|^2-y\xi_I-\overline{y}\overline{\xi_I}+|\xi_I^\perp|^2)^2=\\
=&\,(|y-\overline{\xi_I}|^2+|\xi_I^\perp|^2)^2=|y-\overline{\xi}|^4.
\end{align*}
As a consequence, we deduce:
\begin{align*}
M_I(y,\xi,a)&=Q_I(y,\xi,\overline{\xi}a)-\overline{y}Q_I(y,\xi,a)=Q_I(y,\xi,1)(\overline{\xi}a)-\overline{y}(Q_I(y,\xi,1)a)=|y-\overline{\xi}|^4(\overline{\xi}a-\overline{y}a)
\end{align*}
and hence
\[
\frac{M_I(y,\xi,a)}{N_I(y,\xi)^2}=\frac{|y-\overline{\xi}|^4(\overline{\xi}a-\overline{y}a)}{|y-\xi|^4|y-\overline{\xi}|^4}=\frac{\overline{\xi}a-\overline{y}a}{|\xi-y|^4},
\]
as desired.
\end{proof}


Next result is a version of Lemma 3.2 of \cite{GP2014-global}.

\begin{lem}[Sliceness criterion]\label{lem:sliceness-criterion}
Let $f:\Omega_D\to\oo$ be a function. The following assertions are equivalent.
\begin{itemize}
 \item[(1)] $f$ is a slice function.
 \item[(2)] There exists $J\in\sph$ with the following property: for each $(\alpha,\beta)\in D$ and for each $I\in\sph$, if $x=\alpha+\beta I$ and $y=\alpha+\beta J$, then
\begin{equation}\label{eq:sl-cr}
f(x)=\frac{1}{2}(f(y)+f(\overline{y}))-\frac{I}{2}(J(f(y)-f(\overline{y}))).
\end{equation}
 \item[(3)] Equation \eqref{eq:sl-cr} holds for each $x=\alpha+\beta I$ and $y=\alpha+\beta J$ with $(\alpha,\beta)\in D$ and $I,J\in\sph$.
\end{itemize}
\end{lem}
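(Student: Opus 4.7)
The strategy is to prove the chain $(1)\Rightarrow(3)\Rightarrow(2)\Rightarrow(1)$, with $(3)\Rightarrow(2)$ coming for free by specializing $J$ to any chosen element of $\sph$, and $(1)\Rightarrow(3)$ being a direct computation with the stem function, while $(2)\Rightarrow(1)$ is the content-carrying implication, where we must \emph{manufacture} a stem function out of the hypothesis.

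For $(1)\Rightarrow(3)$, I would start from the stem function $F=(F_1,F_2)$ inducing $f$. If $(\alpha,\beta)\in D$ and $I,J\in\sph$, set $x:=\alpha+\beta I$ and $y:=\alpha+\beta J$. By definition of $\I(F)$, together with the stem property \eqref{eq:stem}, one has $f(y)=F_1(\alpha,\beta)+JF_2(\alpha,\beta)$ and $f(\overline{y})=F_1(\alpha,\beta)-JF_2(\alpha,\beta)$. Hence $\tfrac{1}{2}(f(y)+f(\overline{y}))=F_1(\alpha,\beta)$ and $\tfrac{1}{2}(f(y)-f(\overline{y}))=JF_2(\alpha,\beta)$. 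Now the left alternative identity $J(JF_2(\alpha,\beta))=J^2F_2(\alpha,\beta)=-F_2(\alpha,\beta)$ yields $-\tfrac{I}{2}\big(J(f(y)-f(\overline{y}))\big)=-I\big(J(JF_2(\alpha,\beta))\big)=IF_2(\alpha,\beta)$. Adding the two pieces reproduces $F_1(\alpha,\beta)+IF_2(\alpha,\beta)=f(x)$, which is exactly \eqref{eq:sl-cr}.

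For $(2)\Rightarrow(1)$, I would use the hypothesized $J\in\sph$ to \emph{define} candidate stem-function components $F_1,F_2:D\to\oo$ by
\[
F_1(\alpha,\beta):=\tfrac{1}{2}\big(f(\alpha+\beta J)+f(\alpha-\beta J)\big),\qquad
F_2(\alpha,\beta):=-\tfrac{1}{2}J\big(f(\alpha+\beta J)-f(\alpha-\beta J)\big).
\]
Substituting $\beta\mapsto-\beta$ and using $\alpha-(-\beta)J=\alpha+\beta J$ gives at once $F_1(\alpha,-\beta)=F_1(\alpha,\beta)$ and $F_2(\alpha,-\beta)=-F_2(\alpha,\beta)$, so $F:=(F_1,F_2)$ is a stem function; in particular $F_2(\alpha,0)=0$ so the definition on $D\cap\R$ is automatic. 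Finally, rewriting \eqref{eq:sl-cr} in terms of these $F_1$ and $F_2$ reads literally $f(\alpha+\beta I)=F_1(\alpha,\beta)+IF_2(\alpha,\beta)=\I(F)(\alpha+\beta I)$, for every $(\alpha,\beta)\in D$ and every $I\in\sph$; hence $f=\I(F)$, so $f$ is slice.

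The only non-routine point — common to both non-trivial implications — is the handling of the nested octonionic product $I(J(J\,\cdot\,))$. The bracketing chosen in (2) is exactly the one that collapses via the left alternative law $a(ab)=a^2b$ (applied with $a=J$), which holds in every alternative algebra and in particular in $\oo$; no use of full associativity or of Artin's theorem with three generators is required. So no real obstacle arises, and the proof is essentially a bookkeeping calculation once this bracketing is respected.
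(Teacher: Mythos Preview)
Your proof is correct and follows essentially the same approach as the paper: both arguments recover $F_1$ and $F_2$ from $f(y)$ and $f(\overline y)$, use the alternativity identity $J(J\,\cdot)=J^2(\cdot)=-(\cdot)$ to collapse the nested product, and for $(2)\Rightarrow(1)$ define the stem function from the data on $\C_J$ exactly as you do. The only cosmetic difference is the organization of the implications (the paper shows $(1)\Rightarrow(2)\,\&\,(3)$, $(2)\Rightarrow(1)\,\&\,(3)$, $(3)\Rightarrow(2)$, whereas you run the cycle $(1)\Rightarrow(3)\Rightarrow(2)\Rightarrow(1)$), which is immaterial.
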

\begin{proof}
If $f$ is a slice function induced by the stem function $(F_1,F_2)$ then, given any $J\in\sph$, $F_1(\alpha,\beta)=\frac{1}{2}(f(y)+f(\overline{y}))$ and $F_2(\alpha,\beta)=-\frac{1}{2}J(f(y)-f(\overline{y}))$, so \eqref{eq:sl-cr} holds for each $I\in\sph$. This proves that (1) implies (2) and (3).

Suppose that there exists $J\in\sph$ as in (2). Define the function $(F_1,F_2):D\to\oo^2$ by setting $F_1(\alpha,\beta):=\frac{1}{2}(f(y)+f(\overline{y}))$ and $F_2(\alpha,\beta):=-\frac{1}{2}J(f(y)-f(\overline{y}))$. Note that $F_1(\alpha,-\beta)=\frac{1}{2}(f(\overline{y})+f(y))=F_1(\alpha,\beta)$ and $F_2(\alpha,-\beta)=-\frac{1}{2}J(f(\overline{y})-f(y))=-F_2(\alpha,\beta)$. Hence $(F_1,F_2)$ is a stem function. Thanks to \eqref{eq:sl-cr}, we have that $\I((F_1,F_2))=f$, so $f$ is a slice function. In particular, $f$ satisfies (3). This proves that (2) implies (1) and (3).

Evidently, (3) implies (2). The proof is complete. 
\end{proof}

\begin{cor}\label{cor:int-sliceness}
Let $f:\Omega_D\to\oo$ be a slice function of class $\mscr{C}^1$, and let $\mathsf{S},\mathsf{V}:\Omega_D\to\oo$ be the functions defined by setting
\[
\mathsf{S}(x):=\int_{\partial\Omega_{\II}}S_{f,\II}(x,\xi)\,\mr{ds}_{\II}(\xi)
\quad\text{ and }\quad
\mathsf{V}(x):=\int_{\Omega_{\II}}V_{f,\II}(x,\xi)\,\mr{dv}_{\II}(\xi).
\]
Then $\mathsf{S}$ and $\mathsf{V}$ are slice functions.
\end{cor}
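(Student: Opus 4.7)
My plan is to verify identity \eqref{eq:sl-cr} of Lemma \ref{lem:sliceness-criterion} for $\mathsf{S}$ and $\mathsf{V}$, using $J$ (the second component of $\II=(I,J)$) as the distinguished imaginary unit. By construction, for each fixed $\xi\in\partial\Omega_\II$ (respectively $\xi\in\Omega_\II$) the partial function $x\mapsto S_{f,\II}(x,\xi)=S_{f,\II,\xi}(x)$ (respectively $V_{f,\II,\xi}$) is a slice function on $\oo\setminus\sph_{I,\xi}$; this was already noted immediately after Definition \ref{def:integrands}. Applying Lemma \ref{lem:sliceness-criterion}(3) to this $x$-slice function, for every $(\alpha,\beta)\in D$ and every $K\in\sph$, with $x=\alpha+\beta K$ and $y=\alpha+\beta J$, I obtain the pointwise-in-$\xi$ identity
\[
S_{f,\II}(x,\xi)=\tfrac{1}{2}\big(S_{f,\II}(y,\xi)+S_{f,\II}(\overline{y},\xi)\big)-\tfrac{K}{2}\Big(J\big(S_{f,\II}(y,\xi)-S_{f,\II}(\overline{y},\xi)\big)\Big),
\]
together with the analogous identity for $V_{f,\II}$.

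Next I would integrate this identity over $\xi\in\partial\Omega_\II$ with respect to $\mr{ds}_\II$, and over $\xi\in\Omega_\II$ with respect to $\mr{dv}_\II$ in the case of $V_{f,\II}$. The crucial observation is that $K$ and $J$ are constants in $\xi$, and the map $a\mapsto -\tfrac{K}{2}(J a)$ is a fixed $\R$-linear endomorphism of $\oo$. Since the $\oo$-valued Lebesgue integral against a real measure is $\R$-linear in the integrand, this constant linear operator commutes with the integral. Hence I obtain
\[
\mathsf{S}(x)=\tfrac{1}{2}\big(\mathsf{S}(y)+\mathsf{S}(\overline{y})\big)-\tfrac{K}{2}\Big(J\big(\mathsf{S}(y)-\mathsf{S}(\overline{y})\big)\Big),
\]
and likewise for $\mathsf{V}$. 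By Lemma \ref{lem:sliceness-criterion}(2) with the fixed choice of $J$, this proves that $\mathsf{S}$ and $\mathsf{V}$ are slice functions.

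The only point needing care is absolute integrability, which simultaneously guarantees well-definedness of $\mathsf{S},\mathsf{V}$ and legitimises the interchange of the constant $\R$-linear operator with the integral. For $\mathsf{S}$, formula \eqref{eq:SfII} exhibits the integrand as a polynomial in $(x,\xi,\n_\II(\xi)f(\xi))$ divided by $2\pi^2 N_I(x,\xi)^2$; by Lemma \ref{lem:NgIxi}, the zeros of $N_I(x,\cdot)$ on $\partial\Omega_\II$ are contained in $\sph_x\cap\partial\Omega_\II$, which has $\mr{ds}_\II$-measure zero for every $x\in\Omega_D$, and compactness of $\partial\Omega_\II$ supplies the $L^1$ bound. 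For $\mathsf{V}$ the singularity of $V_{f,\II}(x,\cdot)$ near $\xi=x$ is of order $|\xi-x|^{-3}$ in $\hh_\II\cong\R^4$ and thus locally integrable; this is the same standard issue that underpins Theorem \ref{thm:Cauchy-int-f}, so I do not expect any genuine obstacle.
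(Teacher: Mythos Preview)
Your proposal is correct and follows essentially the same route as the paper: verify identity \eqref{eq:sl-cr} pointwise in $\xi$ using the sliceness of each $x\mapsto S_{f,\II,\xi}(x)$ and $x\mapsto V_{f,\II,\xi}(x)$, then pull the constant $\R$-linear operator $a\mapsto -\tfrac{K}{2}(Ja)$ through the integral. The paper uses implication $(3)\Rightarrow(1)$ of Lemma \ref{lem:sliceness-criterion} with generic $I,J\in\sph$ (overloading the fixed symbols from $\II$), whereas you use $(2)\Rightarrow(1)$ with a fixed $J$ and a renamed generic unit $K$; this is a cosmetic difference, and your added remarks on integrability are a welcome bonus that the paper leaves implicit.
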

\begin{proof}
By definitions \eqref{eq:SfIxi}, \eqref{eq:SfI} and \eqref{eq:VfIxi}, \eqref{eq:VfI}, for each fixed $\xi$, the functions $x\mapsto S_{f,\II}(x,\xi)$ and $x\mapsto V_{f,\II}(x,\xi)$ are slice functions, so they satisfy \eqref{eq:sl-cr} for each given $x=\alpha+\beta I$ and $y=\alpha+\beta J$ with $(\alpha,\beta)\in D$ and $I,J\in\sph$. As a consequence, we have:
\begin{align*}
\mathsf{S}(x)=\,&\int_{\partial\Omega_{\II}}S_{f,\II}(x,\xi)\,\mr{ds}_{\II}(\xi)=\\
=\,&\int_{\partial\Omega_{\II}}\left(\frac{1}{2}(S_{f,\II}(y,\xi)+S_{f,\II}(\overline{y},\xi))-\frac{I}{2}(J(S_{f,\II}(y,\xi)-S_{f,\II}(\overline{y},\xi)))\right)\,\mr{ds}_{\II}(\xi)=\\
=\,&\frac{1}{2}\left(\int_{\partial\Omega_{\II}}S_{f,\II}(y,\xi)\,\mr{ds}_{\II}(\xi)+\int_{\partial\Omega_{\II}}S_{f,\II}(\overline{y},\xi)\,\mr{ds}_{\II}(\xi)\right)+\\
&-\frac{I}{2}\left(J\left(\int_{\partial\Omega_{\II}}S_{f,\II}(y,\xi)\,\mr{ds}_{\II}(\xi)-\int_{\partial\Omega_{\II}}S_{f,\II}(\overline{y},\xi)\,\mr{ds}_{\II}(\xi)\right)\right)=\\
=\,&\frac{1}{2}(\mathsf{S}(y)+\mathsf{S}(\overline{y}))-\frac{I}{2}(J(\mathsf{S}(y)-\mathsf{S}(\overline{y}))).
\end{align*}
Implication $(3)\Longrightarrow(1)$ in Lemma \ref{lem:sliceness-criterion} ensures that $\mathsf{S}$ is a slice function. The same argument shows that $\mathsf{V}$ is a slice function.
\end{proof}

\begin{proof}[Proof of Theorem \ref{thm:Cauchy-int-f}]
Theorem 5.1 of \cite{JRS2019} asserts that, for each $x\in\Omega_{\II}$, it holds:
\[
f(x)=\int_{\partial\Omega_{\II}}\frac{1}{2\pi^2}\frac{\overline{\xi-x}}{|\xi-x|^4}(\n_{\II}(\xi)f(\xi))\,\mr{ds}_{\II}(\xi)-\int_{\Omega_{\II}}\frac{1}{2\pi^2}\frac{\overline{\xi-x}}{|\xi-x|^4}(\DD_{\II}f(\xi))\,\mr{dv}_{\II}(\xi).
\]
In particular, thanks to Lemma \ref{lem:char-Cauchy}, for each $x\in\Omega_D\cap\C_I\subset\Omega_{\II}$, we have:
\begin{equation*}
f(x)=\int_{\partial\Omega_{\II}}S_{f,\II}(x,\xi)\,\mr{ds}_{\II}(\xi)-\int_{\Omega_{\II}}V_{f,\II}(x,\xi)\,\mr{dv}_{\II}(\xi).
\end{equation*}
By Corollary \ref{cor:int-sliceness}, the function $\Omega_D\to\oo, x \mapsto\int_{\partial\Omega_{\II}}S_{f,\II}(x,\xi)\,\mr{ds}_{\II}(\xi)-\int_{\Omega_{\II}}V_{f,\II}(x,\xi)\,\mr{dv}_{\II}(\xi)$ is slice. Hence Corollary \ref{cor:ip} implies that the preceding equation holds for each $x\in\Omega_D$, i.e. \eqref{eq:BP} holds.

If $f$ is slice Fueter-regular, then \eqref{eq:sfrfDforall}, \eqref{eq:VfIxi} and \eqref{eq:VfI} imply that $V_{f,\II}=0$, so \eqref{eq:BP} reduces to \eqref{eq:C}. Moreover, equality (5.12) of Corollary 5.2 of \cite{JRS2019} ensures that $\int_{\partial\Omega_{\II}}S_{f,\II}(x,\xi)\,\mr{ds}_{\II}(\xi)=0$ for each $x\in\hh_{\II}\setminus\mr{cl}(\Omega_D)$. Repeating the preceding argument, we obtain that the same equality holds for each $x\in\oo\setminus\mr{cl}(\Omega_D)$. This proves \eqref{eq:C0}.
\end{proof}


\subsection*{Proofs of Sections \ref{subsec:taylor-expansions} and \ref{subsec:laurent-expansions}}

Fix $\II=(I,J)\in\mc{N}$. Let $\kappa\in\N^3$. Recall that $P_\kappa:\hh\to\hh\subset\oo$ is the standard $\kappa^{\mr{th}}$ Fueter polynomial, and $Q_\kappa:\hh\setminus\{0\}\to\hh\subset\oo$ the function $\partial_\kappa\Env$, where $\Env(x)=\frac{1}{2\pi^2}\frac{\overline{x}}{|x|^4}$. Furthermore, $\psi_{\II}:\hh\to\oo$ denotes the real algebra embedding sending $x_0+x_1i+x_2j+x_3k$ into $x_0+x_1I+x_2J+x_3IJ$. 

Let us give a characterization of $\Pp_{\II,\kappa}:\oo^2\to\oo$ and $\Qq_{\II,\kappa}:(\oo\setminus\{0\})\times\oo\to\oo$.

\begin{lem}\label{lem:ppqq}
$\Pp_{\II,\kappa}$ is the unique function $\mc{P}:\oo^2\to\oo$ satisfying the following condition: for each $c\in\oo$, the restriction function $\mc{P}|_c:\oo\to\oo$, defined by $\mc{P}|_c(x):=\mc{P}(x,c)$, is a slice function such that
\begin{equation}\label{eq:ffueter}
\mc{P}|_c(x)=\psi_{\II}(P_\kappa(\psi_{\II}^{-1}(x)))c \quad \text{ for each $x\in\C_I$.}
\end{equation}

Similarly, $\Qq_{\II,\kappa}$ is the unique function $\mc{Q}:(\oo\setminus\{0\})\times\oo\to\oo$ satisfying the following condition: for each $c\in\oo$, the restriction function $\mc{Q}|_c:\oo\setminus\{0\}\to\oo$, defined by $\mc{Q}|_c(x):=\mc{Q}(x,c)$, is a slice function such that
\begin{equation}\label{eq:fffueter}
\mc{Q}|_c(x)=\psi_{\II}(Q_\kappa(\psi_{\II}^{-1}(x)))c \quad \text{ for each $x\in\C_I\setminus\{0\}$.}
\end{equation}
\end{lem}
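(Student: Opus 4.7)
The plan is to prove each of the two characterizations via the same three-step scheme, treating $\Pp_{\II,\kappa}$ explicitly; $\Qq_{\II,\kappa}$ is then handled identically, using Lemma~\ref{lem:19} and the coefficients $m_{\II,\kappa,h}$ in place of Lemma~\ref{lem:18} and the $\ell_{\II,\kappa,h}$, with the extra real-valued slice-preserving factor $1/|x|^{4+2\kk}$.

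\emph{Uniqueness} is immediate from the identity principle (Corollary~\ref{cor:ip}): if $\mc{P}$ satisfies the stated conditions, each $c$-restriction $\mc{P}|_c$ is a slice function whose values on $\C_I$ are prescribed by $\psi_{\II}(P_\kappa(\psi_{\II}^{-1}(\cdot)))\,c$, hence $\mc{P}|_c$ is completely determined on $\oo$. \emph{Sliceness} of $\Pp_{\II,\kappa}(\cdot,c)$ reduces to inspecting \eqref{eq:sfpf} summand by summand: each summand has the form $\mr{Re}(x)^p\mr{Im}(x)^q d$ with $d\in\oo$ a fixed octonion. When $q$ is even, $\mr{Im}(x)^q=(-|\mr{Im}(x)|^2)^{q/2}$ is real-valued and slice-preserving, so the summand is a slice-preserving real function times the constant $d$. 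When $q$ is odd, the summand factors as a real-valued slice-preserving function times the pointwise product $x\mapsto\mr{Im}(x)\,d$, which is slice with stem function $(\alpha,\beta)\mapsto(0,\beta d)$. A finite sum of slice functions is slice.

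The delicate step is \emph{agreement on $\C_I$}. For $x=x_0+x_1 I\in\C_I$ one has $\mr{Im}(x)^{2h}=(-1)^h x_1^{2h}$ and $\mr{Im}(x)^{2h+1}=(-1)^h x_1^{2h+1}I$, so the even summands of \eqref{eq:sfpf} specialize immediately. The odd summands rely on the key identity
\[
I\cdot\langle I;\ell,c\rangle=-I\bigl(I((I\ell)c)\bigr)=(I\ell)c,
\]
which holds by the alternativity of $\oo$ and is precisely what the definition \eqref{eq:I-product} is engineered to produce. On the other side, since $\psi_{\II}:\hh\to\oo$ is a real algebra homomorphism and $\psi_{\II}^{-1}(x)\in\C_i$, Lemma~\ref{lem:18} combined with \eqref{eq:fueter-polynomials-restr} yields
\[
\psi_{\II}(P_\kappa(\psi_{\II}^{-1}(x)))=\sum_{h=0}^{\kk}x_0^{\kk-h}(x_1 I)^h\,\ell_{\II,\kappa,h}\in\hh_{\II},
\]
and right-multiplying by $c$ reproduces, summand by summand, exactly what \eqref{eq:sfpf} specializes to at $x$.

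The principal obstacle is the parenthesization inside the odd summands in the third step: without the rearrangement $I\cdot\langle I;\ell,c\rangle=(I\ell)c$, the two sides of \eqref{eq:ffueter} do not visibly agree, and one must invoke alternativity rather than full associativity to perform the simplification. Once this identity is secured, what remains is organized bookkeeping of real scalars, and the same bookkeeping produces \eqref{eq:fffueter} from \eqref{eq:QqIIk} in the $\Qq$-case.
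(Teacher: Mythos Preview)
Your proof is correct and follows essentially the same approach as the paper's: uniqueness via the identity principle (Corollary~\ref{cor:ip}), and verification of \eqref{eq:ffueter} on $\C_I$ by specializing \eqref{eq:sfpf} at $x=x_0+x_1I$ and using the alternativity identity $I\cdot\langle I;\ell,c\rangle=(I\ell)c$ to reparenthesize the odd summands. The paper phrases this reparenthesization as an application of Artin's theorem and runs the whole thing as a single chain of equalities, while you isolate the key identity and add an explicit sliceness check for each summand, but the substance is identical.
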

\begin{proof}
We follow the strategy used in the proof of Lemma \ref{lem:char-Cauchy}. We prove only the first part of the present lemma, the proof of the second being similar. Let $x=x_0+x_1I\in\C_I$ and let $\x\in\C_i$ with $\psi_{\II}(\x)=x$, i.e $\x=x_0+x_1i$. By Corollary \ref{cor:ip}, it suffices to show that $\Pp_{\II,\kappa}(x,c)=\psi_{\II}(P_\kappa(\x))c$. Bearing in mind \eqref{eq:lIIk}, \eqref{eq:I-product}, \eqref{eq:sfpf}, and using Artin's theorem, if $\kk=|\kappa|$, we have:
\begin{align*}
\Pp_{\II,\kappa}(x,c)&=\sum_{h=0}^{\lfloor \frac{\kk}{2}\rfloor} x_0^{\kk-2h}(x_1I)^{2h}(\ell_{\II,\kappa,2h}c)+\sum_{h=0}^{\lfloor \frac{\kk-1}{2}\rfloor}x_0^{\kk-2h-1}(x_1I)^{2h+1}(-I((I\ell_{\II,\kappa,2h+1})c))=\\
&=\sum_{h=0}^{\lfloor \frac{\kk}{2}\rfloor} x_0^{\kk-2h}(-x_1^2)^h(\ell_{\II,\kappa,2h}c)+\sum_{h=0}^{\lfloor \frac{\kk-1}{2}\rfloor}x_0^{\kk-2h-1}(-x_1^2)^h(x_1I)(-I((I\ell_{\II,\kappa,2h+1})c))=\\
&=\sum_{h=0}^{\lfloor \frac{\kk}{2}\rfloor} (x_0^{\kk-2h}(-x_1^2)^h\ell_{\II,\kappa,2h})c+\sum_{h=0}^{\lfloor \frac{\kk-1}{2}\rfloor}x_0^{\kk-2h-1}(-x_1^2)^hx_1(I\ell_{\II,\kappa,2h+1})c=\\
&=\sum_{h=0}^{\lfloor \frac{\kk}{2}\rfloor} (x_0^{\kk-2h}(-x_1^2)^h\ell_{\II,\kappa,2h})c+\sum_{h=0}^{\lfloor \frac{\kk-1}{2}\rfloor}(x_0^{\kk-2h-1}(x_1I)(-x_1^2)^h\ell_{\II,\kappa,2h+1})c=\\
&=\Pp_{\II,\kappa}(x,1)c=\psi_{\II}(P_\kappa(\x))c.
\end{align*}

The proof is complete.
\end{proof}

\begin{proof}[Proof of Theorem \ref{thm:taylor}]
Theorem 6.1 of \cite{JRS2019} asserts that, for each $x\in B_{\II}(r):=B(r)\cap\hh_\II$, it holds:
\begin{equation}\label{eq:k}
f(x)=\sum_{\kk\in\N}\sum_{\kappa\in\N^3,|\kappa|=\kk}\psi_\II(P_\kappa(\psi_{\II}^{-1}(x)))\frac{\partial_\kappa f(0)}{\kappa!},
\end{equation}
where the series converges uniformly on compact subsets of $B_\II(r)$. Actually, this series converges absolutely in the sense of \cite[Section 9.1.3]{GHS2008}, and hence it converges totally in the sense of \cite[Definition 3.1]{GPS2017-singular}. In particular, the same is true replacing $B_\II(r)$ with $B_I(r):=B_\II(r)\cap\C_I=B(r)\cap\C_I$. Combining the latter fact with Lemma \ref{lem:ppqq} and Definition \ref{def:sfpf}, we deduce:
\begin{equation*}
f(x)=\sum_{\kk\in\N}\sum_{\kappa\in\N^3,|\kappa|=\kk}\Pp_{\II,\kappa}(x,\textstyle\frac{\partial_\kappa f(0)}{\kappa!})=\sum_{\kk\in\N}\Pp_{\II,\kk;f,0}(x) \quad \text{ for each $x\in B_I(r)$}
\end{equation*}
and the series converges uniformly on compact subsets of $B_I(r)$. Now  \cite[Theorem 3.4]{GPS2017-singular} and above Corollary \ref{cor:ip} ensure that $f=\sum_{\kk\in\N}\Pp_{\II,\kk;f,0}$ on the whole $B(r)$, where the series $\sum_{\kk\in\N}\Pp_{\II,\kk;f,0}$ converges uniformly on compact subsets of $B(r)$.
\end{proof}

\begin{proof}[Proof of Corollary \ref{cor:taylor}]
Given any $n,m\in\N$ and $a\in\oo$, it is easy to verify that $\overline{\partial}(\mr{Re}(x)^n)=\frac{n}{2}\mr{Re}(x)^{n-1}$, $\overline{\partial}(\mr{Im}(x)^m)=-\frac{m}{2}\mr{Im}(x)^{m-1}$ and hence $\overline{\partial}(\mr{Re}(x)^n\mr{Im}(x)^ma)=\frac{n}{2}\mr{Re}(x)^{n-1}\mr{Im}(x)^ma-\frac{m}{2}\mr{Re}(x)^n\mr{Im}(x)^{m-1}a$. Furthermore, $(\mr{Re}(x)^n\mr{Im}(x)^ma)_s'=0$ if $m$ is even and $(\mr{Re}(x)^n\mr{Im}(x)^ma)_s'=\mr{Re}(x)^n\mr{Im}(x)^{m-1}a$ if $m$ is odd. By \eqref{eq:sfpf} and Definition \ref{def:sfpf}, it follows that the functions $\overline{\partial}\Pp_{\II,\kk;f.0}:B(r)\to\oo$ and $(\Pp_{\II,\kk;f.0})_s':B(r)\to\oo$ have the following property: each of their eight real components is either the zero function or a homogeneous polynomial function of degree $\kk-1$ in the eight real components of $x$. Thanks to Theorem \ref{thm:2}, we know that $\overline{\partial} f=f_s'$ on $B(r)\setminus\R$. Since the series $\sum_{\kk\in\N}\Pp_{\II,\kk;f,0}$ converges uniformly on compact subsets of $B(r)$, we deduce that $\sum_{\kk\in\N}\overline{\partial}\Pp_{\II,\kk;f,0}=\sum_{\kk\in\N}(\Pp_{\II,\kk;f,0})_s'$ on $B(r)\setminus\R$, where the latter two series converges uniformly on compact subsets of $B(r)\setminus\R$. The above homogeneity property of $\overline{\partial}\Pp_{\II,\kk;f.0}$ and $(\Pp_{\II,\kk;f.0})_s'$ implies that $\overline{\partial}\Pp_{\II,\kk;f,0}=(\Pp_{\II,\kk;f,0})_s'$ on $B(r)\setminus\R$ for each $\kk\in\N$. Hence each $\Pp_{\II,\kk;f,0}$ is a slice Fueter-regular function.
\end{proof}

\begin{proof}[Proof of Theorem \ref{thm:laurent} and Corollary \ref{cor:laurent}]
To prove these results, it is suffices to repeat the arguments used in the proofs of Theorem \ref{thm:taylor} and Corollary \ref{cor:taylor} respectively, replacing \cite[Theorem 6.1]{JRS2019} and \eqref{eq:sfpf} with \cite[Theorem 6.5]{JRS2019} and \eqref{eq:QqIIk}, respectively.
\end{proof}




\subsection*{Proofs of Section \ref{subsec:mmp}} To prove Theorem \ref{thm:MMP}, we need two preliminary results. First, we reformulate a lemma of \cite{GPS2019}.

\begin{lem}[Lemma 4.14 of {\cite{GPS2019}}]\label{lem:oo}
Let $f:\Omega_D\to\oo$ be a slice function and let $c$ be a constant in $\oo\setminus\{0\}$. Then the following holds:
\begin{itemize}
 \item[(1)] For each $x\in\Omega_D$, there exists $y_x\in\sph_x$ such that $(f\cdot c)(x)=f(y_x)c$.
 \item[(2)] If $f$ is continuous and $\Omega_D':=\Omega_D\setminus (\R\cup V(f_s'))\neq\emptyset$, then there exists a homeomorphism $\Phi:\Omega_D'\to\Omega_D'$ such that $\Phi(\sph_x)=\sph_x$ and $(f\cdot c)(x)=f(\Phi(x))c$ for each $x\in\Omega_D'$. Furthermore,
\begin{align}
\Phi(x)&=((x(f_s'(x)c))c^{-1})f_s'(x)^{-1},\label{eq:12}\\
\Phi^{-1}(x)&=((xf_s'(x))c)(c^{-1}f_s'(x)^{-1})\label{eq:13}
\end{align}
for each $x\in\Omega_D'$.
\end{itemize}
\end{lem}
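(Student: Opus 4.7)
The plan is to reduce both parts to a single identification of the ``camshaft shift'' on each sphere $\sph_x$, and then to deploy only two algebraic tools: Artin's theorem, by which every two-generator subalgebra of $\oo$ is associative, and the alternative (Moufang) identities. Throughout, write $F=(F_1,F_2)$ for the stem function inducing $f$, and for $x=\alpha+\beta I\in\Omega_D$ abbreviate $F_i:=F_i(\alpha,\beta)$.

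For part $(1)$, the very definition of the slice product gives
\[
(f\cdot c)(x)=F_1c+I(F_2c), \qquad f(\alpha+\beta J)\,c=F_1c+(JF_2)c \ \text{ for every $J\in\sph$},
\]
so the task reduces to solving $(JF_2)c=I(F_2c)$ with $J\in\sph$. If $F_2=0$ take $y_x=x$; otherwise set $w:=(I(F_2c))c^{-1}$ and $J:=wF_2^{-1}$. The alternative identity $(zc)c^{-1}=z$ gives $wc=I(F_2c)$, and Artin's theorem applied to the associative subalgebra generated by $w$ and $F_2$ gives $(wF_2^{-1})F_2=w$, so $(JF_2)c=I(F_2c)$ as required. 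To verify $J\in\sph$, multiplicativity of the octonionic norm yields $|w|=|I(F_2c)|/|c|=|F_2|$, hence $|J|=1$; for $\mr{Re}(J)=0$ I would use that in any alternative algebra the associator is alternating and purely imaginary, making $\mr{Re}$ cyclic and associator-insensitive on products, so the chains $c\cdot c^{-1}=1$ and $F_2\overline{F_2}=|F_2|^2$ collapse under $\mr{Re}$ and leave only $\mr{Re}(I)=0$. Verifying that each parenthesis rearrangement inside $\mr{Re}$ is licensed is the main algebraic point of part $(1)$. Setting $y_x:=\alpha+\beta J\in\sph_x$ then completes $(1)$.

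For part $(2)$, my plan is to observe that formula \eqref{eq:12} unpacks, on $\Omega_D'$, to exactly the map $x=\alpha+\beta I\mapsto\alpha+\beta J$ produced in $(1)$. Since $\beta f_s'(x)=F_2$ on $\Omega_D\setminus\R$ and $(F_2c)c^{-1}=F_2$ by the alternative property, direct substitution gives
\[
\bigl((x(f_s'(x)c))c^{-1}\bigr)f_s'(x)^{-1}=\alpha+\beta\bigl((I(F_2c))c^{-1}\bigr)F_2^{-1}=\alpha+\beta J\in\sph_x,
\]
which simultaneously delivers $\Phi(\sph_x)\subseteq\sph_x$ and $(f\cdot c)(x)=f(\Phi(x))c$. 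Continuity of $\Phi$ follows from continuity of $f$ and the non-vanishing of $f_s'$ on $\Omega_D'$. To verify that \eqref{eq:13} provides the inverse, I would invoke the alternative-algebra facts $(za)a^{-1}=z$ and $(ab)^{-1}=b^{-1}a^{-1}$, which together yield $(F_2c)(c^{-1}F_2^{-1})=(F_2c)(F_2c)^{-1}=1$ and $(I(F_2c))(c^{-1}F_2^{-1})=I$; substituting these into \eqref{eq:13} at the point $y=\Phi(x)$ then collapses $\Phi^{-1}(\Phi(x))$ to $\alpha+\beta I=x$, with the reverse composition analogous.

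The principal obstacle throughout is parenthesis-tracking: no rearrangement of an octonionic product can be performed unless it is explicitly licensed by Artin's theorem or by an alternative identity, and the hardest point is the verification of $\mr{Re}(J)=0$, where a careful use of the cyclicity and associator-insensitivity of $\mr{Re}$ is needed in place of full associativity. Once that bookkeeping is in place, the lemma reduces to the two principles already highlighted and the specific algebraic form of the ``camshaft'' $J$.
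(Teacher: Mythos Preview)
Your proof is correct and genuinely different from the paper's. The paper does not prove this lemma from scratch: it cites Lemma~4.14 of \cite{GPS2019} to obtain assertion~(1) and the explicit formula~\eqref{eq:12} for $\Phi$, together with the uniqueness of $\Phi(x)$ in $\sph_x$. The only new contribution of the paper's argument is the derivation of~\eqref{eq:13}: rather than computing directly, the paper observes that $(f\cdot c)_s'=f_s'c$ and that $((f\cdot c)\cdot c^{-1})(\Phi(x))=f(\Phi(x))=((f\cdot c)(x))c^{-1}$, and then re-applies the cited lemma with $f\cdot c$ in place of $f$ and $c^{-1}$ in place of $c$; the resulting formula for the inverse map simplifies to~\eqref{eq:13}.

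Your route is more elementary and entirely self-contained: you build the sphere-shift $J=((I(F_2c))c^{-1})F_2^{-1}$ by hand, verify $J\in\sph$ via the associator-insensitivity of $\mr{Re}$ (which indeed reduces $\langle Ia,a\rangle$ to $|a|^2\mr{Re}(I)=0$ with $a=F_2c$, using $[I,a,\bar a]=0$), and then check that~\eqref{eq:12} unpacks to $\alpha+\beta I\mapsto\alpha+\beta J$ and that~\eqref{eq:13} undoes it by repeated use of the right-alternative identity $(ua)a^{-1}=u$. What you gain is independence from the external reference and a transparent algebraic picture of the camshaft; what the paper's approach buys is brevity and a pleasing self-duality (the inverse formula is literally the same lemma applied to $(f\cdot c,c^{-1})$). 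Your acknowledgement that the $\mr{Re}(J)=0$ step is the main point requiring care is accurate, and the sketch you give for it can be made rigorous exactly along the lines you indicate.
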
 
\begin{proof}
The mentioned lemma of \cite{GPS2019} implies at once assertion (1), and it ensures that the map $\Phi:\Omega_D'\to\Omega_D'$, defined by formula \eqref{eq:12}, has the following property: for each $x\in\Omega_D'$, $\Phi(x)$ is the unique element of $\sph_x$ such that $(f\cdot c)(x)=f(\Phi(x))c$. Moreover, if $f$ is continuous, then $f_s'$ and hence $\Phi$ are continuous as well. Note that $(f\cdot c)_s'=f_s'c$. Hence $V((f\cdot c)_s')=V(f_s')$ and, given any $x\in\Omega_D'$, it holds:
\[
((f\cdot c)\cdot c^{-1})(\Phi(x))=f(\Phi(x))=((f\cdot c)(x))c^{-1}.
\]
In this way, applying Lemma 4.14 of \cite{GPS2019} with $f\cdot c$ in place of $f$ and $c^{-1}$ in place of $c$, we obtain that $\Phi$ is bijective and its inverse has the following form:
\begin{align*}
\Phi^{-1}(x)&=((x(((f\cdot c)_s'(x))c^{-1}))c)((f\cdot c)_s'(x))^{-1}\\
&=((xf_s'(x))c)(c^{-1}f_s'(x)^{-1}).
\end{align*}
In particular, $\Phi^{-1}$ is continuous as well.
\end{proof}

Let $f:\Omega_D\to\oo$ be a slice function and let $x$ be a point of $\Omega_D$ such that $V(f)\cap\sph_x=\{x\}$, i.e. $x$ is the unique zero of $f$ on $\sph_x$. Given $c\in\oo\setminus\{0\}$, there exists a unique point $x_c\in\sph_x$ such that $V(f\cdot c)\cap\sph_x=\{x_c\}$. Usually, $x_c\neq x$. This phenomenon, called camshaft effect, is due to the non-associativity of $\oo$, see \cite{GP2011-camshaft,GPS2017,GPS2019} for the general version of such an effect.

The mentioned camshaft effect can be also interpreted as a measure of the difference between the pointwise and the slice products. Let $y\in\Omega_D$ and let $g:\Omega_D\to\oo$ be the slice function $g(x):=(f\cdot c)(x)-f(y)c=(f(x)-f(y))\cdot c$. As we said, there exists a unique $y_c\in\sph_y$ such that $g(y_c)=0$ or, equivalently, $(f\cdot c)(y_c)=f(y)c$. Usually, $y_c\neq y$. 

The second preliminary result we need is the following lemma, which pointwise `annihilates' the camshaft effect in the case of a slice product with a certain pair of constants on the right.

\begin{lem}\label{lem:F1-F2}
Let $f:\Omega_D\to\oo$ be a slice function induced by the stem function $(F_1,F_2):D\to\oo^2$, let $p=\alpha+I\beta\in\Omega_D$ with $\alpha,\beta\in\R$ and $I\in\sph$, and let $w:=\alpha+i\beta\in D$. Define $\gamma:=F_1(w)$, $\delta:=F_2(w)$, $q:=f(p)=\gamma+I\delta$ and the slice functions $g_1,g_2:\Omega_D\to\oo$ by setting
\[
g_1:=(f\cdot\overline{\gamma})\cdot((-\gamma(\overline{q}I))I)
\quad\text{\it and }\quad
g_2:=(f\cdot\overline{\delta})\cdot(\delta\overline{q}).
\]
 
Then $g_1(p)=|\gamma|^2|q|^2$ and $g_2(p)=|\delta|^2|q|^2$.
\end{lem}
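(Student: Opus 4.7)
The plan is to compute $g_1(p)$ and $g_2(p)$ directly from the stem-function definition of the slice product, reducing the resulting non-associative octonionic expressions by Artin's theorem (alternativity) together with one symmetry property of the associator. First, for any slice function $h=\I((H_1,H_2))$ and any constant $c\in\oo$, the slice product $h\cdot c$ has stem function $(H_1c,H_2c)$; iterating this observation shows that $g_1$ and $g_2$ are induced by the stem functions $((F_1\overline{\gamma})c_2,(F_2\overline{\gamma})c_2)$ and $((F_1\overline{\delta})c_3,(F_2\overline{\delta})c_3)$, where $c_2:=(-\gamma(\overline{q}I))I$ and $c_3:=\delta\overline{q}$. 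Evaluating at $p=\alpha+I\beta$ with $F_1(w)=\gamma$, $F_2(w)=\delta$ yields
\[
g_1(p)=|\gamma|^2c_2+I\bigl((\delta\overline{\gamma})c_2\bigr),\qquad g_2(p)=(\gamma\overline{\delta})c_3+|\delta|^2Ic_3.
\]

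Second, from $\overline{q}=\overline{\gamma}-\overline{\delta}I$ and Artin's theorem applied in the (associative) subalgebras generated by $\{\gamma,I\}$ and $\{\delta,I\}$, each of which is closed under conjugation, one computes
\[
c_2=|\gamma|^2-(\gamma\overline{\delta})I,\qquad c_3=\delta\overline{\gamma}-|\delta|^2I.
\]
Substituting these expressions back and applying Artin once more in the subalgebras generated by $\{\delta\overline{\gamma},I\}$ and $\{\gamma\overline{\delta},I\}$ to evaluate products such as $(\delta\overline{\gamma})((\gamma\overline{\delta})I)=|\delta\overline{\gamma}|^2I=|\gamma|^2|\delta|^2I$, I arrive at
\[
g_1(p)=|\gamma|^2B,\qquad g_2(p)=|\delta|^2B,\qquad B:=|\gamma|^2+|\delta|^2+I(\delta\overline{\gamma})-(\gamma\overline{\delta})I.
\]

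Third, I would identify $B$ with $|q|^2$. Expanding $q\overline{q}=(\gamma+I\delta)(\overline{\gamma}-\overline{\delta}I)$ and using $(I\delta)(\overline{\delta}I)=-|\delta|^2$ (again by Artin in $\R\langle\delta,I\rangle$) gives $|q|^2=|\gamma|^2+|\delta|^2+(I\delta)\overline{\gamma}-\gamma(\overline{\delta}I)$, so the identity $B=|q|^2$ amounts to the associator equation $[I,\delta,\overline{\gamma}]=-[\gamma,\overline{\delta},I]$. This is the main obstacle, since the three arguments are generic and cannot be handled by alternativity alone. The crucial ingredient is that every associator in $\oo$ is purely imaginary, so $\overline{[I,\delta,\overline{\gamma}]}=-[I,\delta,\overline{\gamma}]$; on the other hand, the general identity $\overline{[a,b,c]}=-[\overline{c},\overline{b},\overline{a}]$ (a direct consequence of $\overline{ab}=\overline{b}\overline{a}$) applied to $(I,\delta,\overline{\gamma})$ yields $\overline{[I,\delta,\overline{\gamma}]}=-[\gamma,\overline{\delta},-I]=[\gamma,\overline{\delta},I]$. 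Equating the two expressions delivers the required identity, whence $B=|q|^2$ and the lemma follows.
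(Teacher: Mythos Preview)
Your proof is correct and follows essentially the same approach as the paper's: compute the stem functions of $g_1,g_2$, evaluate at $p$, simplify $c_2$ and $c_3$ via Artin's theorem, and reduce both results to $|\gamma|^2B$ and $|\delta|^2B$ with $B=|\gamma|^2+|\delta|^2+I(\delta\overline{\gamma})-(\gamma\overline{\delta})I$, finally identifying $B=|q|^2$ using that associators in $\oo$ are purely imaginary. The only cosmetic difference is in this last step: the paper phrases it as ``the trace function $\mathrm{t}(x)=x+\overline{x}$ vanishes on associators, hence $I(\delta\overline{\gamma})-(\gamma\overline{\delta})I=\mathrm{t}(I(\delta\overline{\gamma}))=\mathrm{t}((I\delta)\overline{\gamma})=(I\delta)\overline{\gamma}-\gamma(\overline{\delta}I)$,'' whereas you combine pure imaginariness of the associator with the conjugation identity $\overline{[a,b,c]}=-[\overline{c},\overline{b},\overline{a}]$; these are equivalent formulations of the same fact.
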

\begin{proof}
Let $b:=(-\gamma(\overline{q}I))I$. Thanks to Artin's theorem, we have: 
\[
b=-(\gamma((\overline{\gamma}-\overline{\delta}I)I))I=-(\gamma(\overline{\gamma}I+\overline{\delta}))I=-(|\gamma|^2I+\gamma\overline{\delta})I=|\gamma|^2-(\gamma\overline{\delta})I.
\]
Let $(G_{1,1},G_{1,2}):D\to\oo^2$ be the stem function inducing $g_1$. Since $G_{1,1}(z)=(F_1(z)\overline{\gamma})b$ and $G_{1,2}(z)=(F_2(z)\overline{\gamma})b$ for each $z\in D$, setting $z=w$, we deduce:
\begin{align*}
g_1(p)&=(F_1(w)\overline{\gamma})b+I((F_2(w)\overline{\gamma})b)=(\gamma\overline{\gamma})(|\gamma|^2-(\gamma\overline{\delta})I)+I((\delta\overline{\gamma})(|\gamma|^2-(\gamma\overline{\delta})I))=\\
&=|\gamma|^4-|\gamma|^2(\gamma\overline{\delta})I+|\gamma|^2I(\delta\overline{\gamma})+|\gamma|^2|\delta|^2=|\gamma|^2(|\gamma|^2+I(\delta\overline{\gamma})-(\gamma\overline{\delta})I+|\delta|^2).
\end{align*}
Bearing in mind that the trace function $\mr{t}:\oo\to\R$, given by $\mr{t}(x):=x+\overline{x}$, vanishes on associators (see e.g. \cite[Lemma 5.6]{GPS2017}), we deduce:
\[
I(\delta\overline{\gamma})-(\gamma\overline{\delta})I=\mr{t}(I(\delta\overline{\gamma}))=\mr{t}((I\delta)\overline{\gamma})=(I\delta)\overline{\gamma}-\gamma(\overline{\delta}I)
\] 
and hence
\[
g_1(p)=|\gamma|^2(|\gamma|^2+(I\delta)\overline{\gamma}-\gamma(\overline{\delta}I)+|\delta|^2)=|\gamma|^2|\gamma+I\delta|^2=|\gamma|^2|q|^2.
\]

Denote $(G_{2,1},G_{2,2}):D\to\oo^2$ the stem function inducing $g_2$. Since $G_{2,1}(z)=(F_1(z)\overline{\delta})(\delta\overline{q})$ and $G_{2,2}(z)=(F_2(z)\overline{\delta})(\delta\overline{q})$ for each $z\in D$, setting $z=w$, we deduce:
\begin{align*}
g_2(p)&=(F_1(w)\overline{\delta})(\delta\overline{q})+I((F_2(w)\overline{\delta})(\delta\overline{q}))=(\gamma\overline{\delta})(\delta\overline{\gamma}-|\delta|^2I)+I((\delta\overline{\delta})(\delta\overline{\gamma}-|\delta|^2I))=\\
&=|\gamma|^2|\delta|^2-|\delta|^2(\gamma\overline{\delta})I+|\delta|^2I(\delta\overline{\gamma})+|\delta|^4=|\delta|^2(|\gamma|^2+I(\delta\overline{\gamma})-(\gamma\overline{\delta})I+|\delta|^2)=\\
&=|\delta|^2(|\gamma|^2+(I\delta)\overline{\gamma}-\gamma(\overline{\delta}I)+|\delta|^2)=|\delta|^2|\gamma+I\delta|^2=|\delta|^2|q|^2.
\end{align*}
The proof is complete.
\end{proof}

The reader notes that $(-\gamma(\overline{q}I))I=\overline{\langle I;q,\overline{\gamma}\rangle}$, because of above definition \eqref{eq:I-product}.

\begin{cor}\label{cor:F1-F2}
Let $f:\Omega_D\to\oo$ be a slice function, let $p\in\Omega_D$ and let $q:=f(p)$. If $q\neq0$, then there exist $a,b\in\oo\setminus\{0\}$ such that $|b|=|a||q|$ and the slice function $g:\Omega_D\to\oo$, defined by $g:=(f\cdot a)\cdot b$, has the following property:
\[
g(p)=|a|^2|q|^2>0.
\]

Furthermore, if $p\not\in\R\cup V(f_s')$, then we can set $a=\overline{f_s'(p)}$ and $b=(f_s'(p))\overline{q}$.
\end{cor}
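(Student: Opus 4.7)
The plan is to derive Corollary~\ref{cor:F1-F2} as an essentially bookkeeping consequence of Lemma~\ref{lem:F1-F2}. I would write $p=\alpha+I\beta$ with $\alpha,\beta\in\R$ and $I\in\sph$, set $w:=(\alpha,\beta)\in D$, and denote by $(F_1,F_2)$ the stem function inducing $f$; putting $\gamma:=F_1(w)$ and $\delta:=F_2(w)$ gives $q=\gamma+I\delta$, so the assumption $q\neq 0$ means that $(\gamma,\delta)\neq(0,0)$.

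To produce $a,b$ in the general case, I would distinguish according to which of $\gamma,\delta$ is nonzero. If $\delta\neq 0$, set $a:=\overline{\delta}$ and $b:=\delta\overline{q}$; both are nonzero, $|b|=|\delta||q|=|a||q|$, and Lemma~\ref{lem:F1-F2} gives directly $g(p)=g_2(p)=|\delta|^2|q|^2=|a|^2|q|^2>0$. If instead $\delta=0$ (so $\gamma\neq 0$ and $q=\gamma$), take $a:=\overline{\gamma}$ and $b:=(-\gamma(\overline{q}I))I$; the computation in the proof of Lemma~\ref{lem:F1-F2} shows $b=|\gamma|^2-(\gamma\overline{\delta})I$, which here collapses to $b=|\gamma|^2\in\R_+$, so $|b|=|\gamma|^2=|a||q|$ and $g(p)=g_1(p)=|\gamma|^2|q|^2=|a|^2|q|^2$. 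These two subcases exhaust the possibilities.

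For the refined assertion, suppose $p\notin\R\cup V(f_s')$, so $\beta\neq 0$ and $f_s'(p)\neq 0$. The stem description of the spherical derivative yields $f_s'(p)=F_2(w)/\beta=\delta/\beta$, so in particular $\delta\neq 0$, $\overline{f_s'(p)}=\overline{\delta}/\beta$ and $f_s'(p)\overline{q}=\delta\overline{q}/\beta$. Since real scalars are slice-preserving and hence central in $(\mc{S}(\Omega_D,\oo),\cdot)$, the choices $a:=\overline{f_s'(p)}$ and $b:=f_s'(p)\overline{q}$ give $g=(f\cdot a)\cdot b=\beta^{-2}g_2$; evaluating at $p$ through Lemma~\ref{lem:F1-F2} produces $g(p)=|\delta|^2|q|^2/\beta^2=|f_s'(p)|^2|q|^2=|a|^2|q|^2$, while $|b|=|f_s'(p)||q|=|a||q|$.

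No step here is genuinely hard: the entire corollary is a direct unpacking of Lemma~\ref{lem:F1-F2} together with the stem-function identity $f_s'(p)=\delta/\beta$. The one point worth attention is that, in the refined statement, one must exploit the centrality of real scalars in the slice algebra to pull the factor $\beta^{-1}$ out of both slots of the slice product; this is what legitimizes the identification $(f\cdot a)\cdot b=\beta^{-2}g_2$ and converts Lemma~\ref{lem:F1-F2} into the precise formula claimed.
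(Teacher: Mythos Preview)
Your proof is correct and follows essentially the same route as the paper's: both derive the corollary directly from Lemma~\ref{lem:F1-F2} by choosing $(a,b)=(\overline{\gamma},(-\gamma(\overline{q}I))I)$ or $(a,b)=(\overline{\delta},\delta\overline{q})$ according to which of $\gamma,\delta$ is nonzero, and then rescale by $\beta^{-1}$ for the refined statement. The only cosmetic differences are that the paper builds the $\beta^{-1}$ factor into its $F_2(w)\neq0$ case from the start (so the refined assertion falls out immediately), whereas you treat the rescaling as a separate step and are more explicit about invoking centrality of real scalars in the slice algebra.
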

\begin{proof}
Let $(F_1,F_2)$ be the stem function inducing $f$, let $p=\alpha+I\beta$ and let $w:=\alpha+i\beta$. Note that $0\neq q=F_1(w)+IF_2(w)$, so either $F_1(w)\neq0$ or $F_2(w)\neq0$. Thanks to the preceding lemma, if $F_1(w)\neq0$, then it suffices to set  $g:=(f\cdot a)\cdot b$ with $a:=\overline{F_1(w)}$ and $b:=-((F_1(w))(\overline{q}I))I\neq0$. If $F_2(w)\neq0$, we can set $a:=\frac{\overline{F_2(w)}}{\beta}=\overline{f_s'(p)}$ and $b:=\frac{(F_2(w))\overline{q}}{\beta}=(f_s'(p))\overline{q}\neq0$.
\end{proof}

\begin{proof}[Proof of Theorem \ref{thm:MMP}]
Let $p$ be a point of $\Omega_D$.

Let us prove point (1). 

Suppose that $|f|$ has a maximum at~$p$. Let $q:=f(p)$. If $q=0$, then $f=0$ and the proof is complete. Suppose $q\neq0$. By Corollary~\ref{cor:F1-F2}, there exist $a,b\in\oo\setminus\{0\}$ such that $|b|=|a||q|$ and the slice function $g:\Omega_D\to\oo$, defined by $g:=(f\cdot a)\cdot b$, has the following property: $g(p)=|a|^2|q|^2>0$. Moreover, if $p\not\in\R\cup V(f_s')$, then we can assume $a=\overline{f_s'(p)}$ and $b=(f_s'(p))\overline{q}$. By Corollary \ref{cor:4}, we also known that $g$ is slice Fueter-regular. Note that, if $(F_1,F_2)$ and $(G_1,G_2)$ are the stem functions inducing $f$ and $g$ respectively, then $G_h=(F_ha)b$ for $h\in\{1,2\}$. In this way, $f$ is constant, i.e. $F_1$ is constant and $F_2=0$, if and only if the same is true for $G_1$ and $G_2$, i.e. $g$ is constant. We will show that $g$ is constant completing the proof.

Applying twice Lemma \ref{lem:oo}(1) to $g$, we see that, given any $x\in\Omega_D$, there exist $y_x,w_x\in\sph_x\subset\Omega_D$ such that
\[
g(x)=((f\cdot a)(y_x))b=(f(w_x)a)b,
\]
so
\begin{equation}\label{eq:bound}
|g(x)|=|f(w_x)||a|^2|q|\leq|a|^2|q|^2=g(p)=|g(p)|.
\end{equation}

Choose $\II=(I,J)\in\mc{N}$ in such a way that $p\in\C_I$, and let $L\in\sph$ be such that $L\perp\hh_{\II}$. Denote $\hat{g}:\Omega_\II=\Omega_D\cap\hh_{\II}\to\oo$ the restriction of $g$ to $\hh_{\II}$. By Lemma 4.6 and Remark 4.7 of \cite{JRS2019}, there exist $\hat{g}_1,\hat{g}_2:\Omega_\II\to\hh_{\II}$ such that $\hat{g}_1$ is (standard left) Fueter-regular in $\hh_{\II}=\hh$, $\hat{g}_2$ is (standard) right Fueter-regular in $\hh_{\II}=\hh$ and $\hat{g}=\hat{g}_1+\hat{g}_2L$. Note that $\hat{g}_1(p)\in\R$, $\hat{g}_2(p)=0$ and $|\hat{g}(p)|=g(p)=\hat{g}_1(p)>0$. By \eqref{eq:bound}, we deduce that $p$ is a maximum of $|\hat{g}|$. Note also that $\Omega_\II$ is connected, because it is dense in $\Omega_\II\setminus\R$ and $\Omega_\II\setminus\R$ is homeomorphic to $D^+\times\sph^2$, where $D^+=\{(x_0,x_1)\in D\,:\,x_1>0\}$ is connected by assumption \eqref{assumption}.


Now we can adapt to the present situation the argument used in the proof of Theorem 7.1 of \cite{GSSbook}. It holds:
\begin{equation}
|\hat{g}_1(p)|^2=|\hat{g}(p)|^2\geq|\hat{g}(x)|^2=|\hat{g}_1(x)|^2+|\hat{g}_2(x)|^2\geq|\hat{g}_1(x)|^2
\end{equation}
for each $x\in\Omega_\II$. It follows that the modulus of the Fueter-regular function $\hat{g}_1:\Omega_\II\to\hh_{\II}$ has a maximum at a point of $\Omega_\II$. On the other hand, Fueter-regular functions enjoy Maximum Modulus Principle (see \cite[Theorem 7.32]{GHS2008}), so $\hat{g}_1$ is constant on its (connected) domain $\Omega_\II$. As a consequence, if $x\in\Omega_\II$, we have:
\[
|\hat{g}_2(x)|^2=|\hat{g}(x)|^2-|\hat{g}_1(x)|^2=|\hat{g}(x)|^2-|\hat{g}(p)|^2\leq0,
\]
hence $\hat{g}_2(x)=0$ and $\hat{g}(x)=\hat{g}_1(x)+\hat{g}_2(x)L=\hat{g}_1(p)=\hat{g}(p)$. In particular, $\hat{g}$ (and hence $g$) is constantly equal to $\hat{g}(p)=g(p)$ on $\C_I\cap\Omega_D$. Thanks to Corollary \ref{cor:ip}, it follows that $g=g(p)$ on $\Omega_D\setminus\R$, and hence on the whole $\Omega_D$ by continuity.

It remains to show point (2). Suppose that $p\not\in V(f_s')$ and $|f|$ has a local maximum at $p$. If $q=f(p)=0$, then $f=0$ locally at $p$ in $\Omega_D$. Since $\Omega_D$ is connected and $f$ is real analytic, it follows that $f=0$ on the whole $\Omega_D$, and we are done. 

Let $q\neq0$. We distinguish two cases.

First, we assume that $p\not\in\R$, so $p\in\Omega_D':=\Omega_D\setminus(\R\cup V(f_s'))$. As above we define $a:=\overline{f_s'(p)}$, $b:=(f_s'(p))\overline{q}=\overline{qa}$ and $g:=(f\cdot a)\cdot b$. Let $U$ be a neighborhood of $p$ in $\Omega_D$ such that $|f(x)|\leq|f(p)|$ for each $x\in U$. Since $\Omega_D'$ is open in $\Omega_D$ and contains $x$, we can also assume that $U\subset\Omega_D'$. Apply twice Lemma \ref{lem:oo}(2) to $g$. We obtain the existence of a homeomorphism $\Psi:\Omega_D'\to\Omega_D'$ such that, for each $x\in\Omega_D'$, $\Psi(\sph_x)=\sph_x$ and $g(x)=(f(\Psi(x))a)b$.

Let us prove that $\Psi(p)=p$. Since we have
\[
(qa)(\overline{qa})=|q|^2|a|^2=g(p)=(f(\Psi(p))a)b=(f(\Psi(p))a)(\overline{qa}),
\]
dividing first by $\overline{qa}\neq0$ and then by $a\neq0$ on the right, we deduce that $f(p)=q=f(\Psi(p))$. On the other hand, $f$ is slice, so $f|_{\sph_p}(x)=c+x(f_s'(p))$ for some $c\in\oo$. Since $f_s'(p)\neq0$, $f|_{\sph_p}$ turns out to be injective. This proves that $\Psi(p)=p$, as claimed.

Now the continuity of $\Psi$ implies the existence of a neighborhood $V$ of $p$ in $\Omega_D'$ such that $\Psi(V)\subset U$. In this way, if $x\in V$, then we have:
\[
|g(x)|=|f(\Psi(x))||a|^2|q|\leq|f(p)||a|^2|q|=|a|^2|q|^2=|g(p)|.
\]
Using $\II$, $L$ and $\hat{g}=\hat{g}_1+\hat{g}_2L$ as above, we conclude that $g$ is constantly equal to $g(p)$ on the whole $\Omega_D$, completing the proof.

Finally, we assume that $p\in\R\cap\Omega_D$. Applying twice Lemma \ref{lem:oo}(1) to $g$ again, we obtain that $p$ is a local maximum for $|g|$. Hence, considering $\II$, $L$ and $\hat{g}=\hat{g}_1+\hat{g}_2L$ as above, we deduce that $g=g(p)$ locally at $p$ in $\Omega_D$. Since $\Omega_D$ is connected and $g$ is real analytic, $g$ turns out to be constantly equal to $g(p)$ on the whole $\Omega_D$.
\end{proof}



\section{Final remarks}\label{sec:final}

We conclude the present paper with some comments concerning possible generalizations of the concept of slice Fueter-regular function we studied above. 

Let $A$ be an arbitrary finite dimensional real alternative $^*$-algebra with unity, equipped with its natural smooth manifold structure as a real vector space. Consider again a non-empty open subset $D$ of $\C$ invariant under the complex conjugation. Let $F=(F_1,F_2):D\to A^2$ be a stem function of class $\mscr{C}^1$. Denote $\Omega_D$ the circularization of $D$ in $A$ and $f=\I(F):\Omega_D\to A$ the slice function induced by $F$, i.e.
\[
f(x_0+x_1 I):=F_1(x_0,x_1)+IF_2(x_0,x_1)
\]
for each $(x_0,x_1)\in D$ and $I\in\sph_A:=\{x\in A\,:\,x^*=-x,x^2=-1\}$. Recall that $\Omega_D$ is an open subset of the quadratic cone $Q_A$ of $A$. We refer the reader to \cite{GP2011} for details. 


\subsection{Slice Vekua-regular functions}\label{subsec:svrf}

Let $\mr{V}=(a,b,c,d):D\setminus\R\to A^4$ be a function {\it even-odd-odd-even in $x_1$}, i.e. $a(x_0,-x_1)=a(x_0,x_1)$, $b(x_0,-x_1)=-b(x_0,x_1)$, $c(x_0,-x_1)=-c(x_0,x_1)$ and $d(x_0,-x_1)=d(x_0,x_1)$.

Consider the Vekua-type system $\mathsf{S}_\mr{V}$ induced by $\mr{V}$:
\begin{equation}\label{eq:V}
\mathsf{S}_\mr{V}:\;
\left\{
\begin{array}{ll}
\displaystyle
\frac{\partial F_1}{\partial x_0}-\frac{\partial F_2}{\partial x_1}+aF_1+bF_2=0
\vspace{.7em}\\
\displaystyle
\frac{\partial F_1}{\partial x_1}+\frac{\partial F_2}{\partial x_0}+cF_1+dF_2=0\,.
\end{array}
\right.
\end{equation}

Note that the function $\frac{\partial F_1}{\partial x_0}-\frac{\partial F_2}{\partial x_1}+aF_1+bF_2$ is even in $x_1$ and the function $\frac{\partial F_1}{\partial x_1}+\frac{\partial F_2}{\partial x_0}+cF_1+dF_2$ is odd in $x_1$. If $\mr{V}$ is $\R^4$-valued, then above system $\mathsf{S}_\mr{V}$ is a particular case of systems known as {\it Bers-Vekua} or {\it Carleman-Bers-Vekua systems}. The study of the latter systems is the core of the pseudoanalytic function theory, see Sections 6.2 and 6.3 of \cite{GHS2016} and their references. 

A natural possibility to define the concept of slice Vekua-regular function is as follows:

\begin{defn}
Let $\mr{V}=(a,b,c,d):D\setminus\R\to A^4$ be any given even-odd-odd-even function in~$x_1$. We say that the slice function $f=\I((F_1,F_2)):\Omega_D\to A$ is a \emph{slice Vekua-regular function with respect to $\mr{V}$}, or a {\it slice $\mr{V}$-Vekua-regular function} for short, if the stem function $(F_1,F_2)$ inducing $f$ satisfies system $\mathsf{S}_\mr{V}$ defined in \eqref{eq:V}. \bs
\end{defn}

One can ask whether there exist pairs $(A,\mr{V})$ such that the corresponding slice $\mr{V}$-Vekua-regular functions $f$ share significant results with quasianalytic functions; or better, such that such functions $f$ 
have a `holomorphic nature', satisfying for example versions of Cauchy integral formula, Taylor and Laurent series expansions, and Maximum Modulus Principle.

The results of the present paper show that the answer is yes, at least in the case $A:=\oo$ and $\mr{V}(x_0,x_1):=(0,2x_1^{-1},0,0)$.

\begin{quest}\label{quest:50}
Do there exist other relevant pairs $(A,\mr{V})$? 
\end{quest}


\subsection{Slice Dirac-regular functions}\label{subsec:sdrf}

A Dirac operator can be defined as a first order differential operator which factorizes a Laplacian. A remarkable example of Dirac operator is the Cauchy-Riemann-Fueter operator $\DD=\frac{\partial}{\partial x_0}+i\frac{\partial}{\partial x_1}+j\frac{\partial}{\partial x_2}+k\frac{\partial}{\partial x_3}$ on quaternions. Similar examples can be defined over other real alternative $^*$-algebras as Clifford algebras and octonions.

In this way, one can generalize the concept of slice Fueter-regular function as follows. Let $A$ be a finite dimensional real alternative $^*$-algebra with unity, let $B$ be a real subalgebra of $A$ and let $\mscr{D}$ be a Dirac operator on $B$. Given a slice function $f:\Omega_D\to A$ of class $\mscr{C}^1$, we say that $f$ is a {\it slice $\mscr{D}$-Dirac-regular function} if the restriction of $f$ to $B\cap\Omega_D$ belongs to the kernel of $\mscr{D}$. If the triple $(A,B,\mscr{D})$ is equal to $(\oo,\hh,\DD)$, then above assertion \eqref{eq:sfrfDexists} implies that the just defined concept of slice $\mscr{D}$-Dirac-regular function coincides with the one of slice Fueter-regular function.  

Also in this case, it is natural to ask the following:

\begin{quest}\label{quest:51}
Do there exist other relevant triples $(A,B,\mscr{D})$?
\end{quest}



\end{document}